\newtheorem{assumptions}{Assumptions}{\bf}{\rm}
\newtheorem{algorithm}[theorem]{Algorithm}
\newcommand{\bbP}{\mathbb P}
\newcommand{\bbR}{\mathbb R}
\newcommand{\bbC}{\mathbb C}
\renewcommand{\Delta}{\triangle}
\newcommand{\eps}{\varepsilon} 
\newcommand{\dee}{\mathrm{d}}
\newcommand{\tr}{\mathrm{Tr}}
\newcommand{\EE}{\mathbb{E}}
\newcommand{\PP}{\mathbb{P}}
\newcommand{\R}{\mathbb{R}}
\newcommand{\N}{\mathbb{N}}
\newcommand{\cH}{\mathcal{H}}
\newcommand{\sP}{\mathsf{P}}
\newcommand{\bR}{\mathbf{R}}
\newcommand{\bC}{\mathbf{C}}
\newcommand{\bu}{\mathbf{u}}
\begin{document}

\title{How Deep Are Deep Gaussian Processes?}
\author{\name M.M.Dunlop \email mdunlop@caltech.edu\\ \addr Computing and Mathematical Sciences\\ Caltech\\ Pasadena\\ CA 91125, USA
\AND
\name M.A. Girolami \email m.girolami@imperial.ac.uk\\ \addr Department of Mathematics\\ Imperial College London\\SW7 2AZ, UK\\and\\The Alan Turing Institute\\96 Euston Road\\London, NW1 2DB, UK
\AND
\name A.M. Stuart \email astuart@caltech.edu\\ \addr Computing and Mathematical Sciences\\ Caltech\\ Pasadena\\ CA 91125, USA
\AND
\name A.L. Teckentrup \email a.teckentrup@ed.ac.uk\\ \addr School of Mathematics\\ University of Edinburgh\\ Edinburgh, EH9 3FD, UK\\and\\The Alan Turing Institute\\96 Euston Road\\London, NW1 2DB, UK
}

\editor{Kevin Murphy and Bernhard Sch{\"o}lkopf}

\maketitle

\begin{abstract} Recent research has shown the potential
utility {of Deep Gaussian Processes. These deep structures are} probability distributions, designed through hierarchical construction, which are conditionally Gaussian. In this paper, the current published body of work is placed  in a common framework and, through recursion, several classes of deep Gaussian processes are defined. The {resulting} samples {generated from a deep Gaussian process} have a Markovian structure 
with respect to the depth parameter, and the effective depth of the resulting process is 
interpreted in terms of the ergodicity, or non-ergodicity, of the resulting Markov chain. 
{For the classes of deep Gaussian processes introduced, we provide results concerning their ergodicity and hence their effective depth.
We also demonstrate how these processes may be used for inference; in particular
we show how a Metropolis-within-Gibbs construction across the levels of the
hierarchy can be used to derive sampling tools which are robust to the
level of resolution used to represent the functions on a computer. For illustration, we consider the effect of ergodicity in some simple numerical examples.}
\end{abstract}


\section{Introduction}
\label{sec:I}
\subsection{Background}
\label{ssec:B}
Gaussian processes have proved remarkably successful as a tool for various statistical inference
and machine learning tasks \citet{williams2006gaussian,ko01,hkccr04,stein}. This success relates
in part to the ease with which computations may be performed in the Gaussian
framework, and also to the flexible ways in which Gaussian processes 
may be used, for example when combined with thresholding to perform classification tasks
via probit models \citet{neal1997monte,williams2006gaussian} 
or to find interfaces in Bayesian inversion
\citet{ILS16}. {Nonetheless there are limits to the sort of phenomena that are
readily expressible via direct use of Gaussian processes, such as in the sparse data scenario, where the constructed probability distribution is far from posterior contraction}. Recognizing this 
fact, there have been a number of interesting research activities 
which seek to represent
new phenomena via the hierarchical cascading of Gaussians. Early work of this type 
includes the PhD thesis \citet{paciorek_thesis} (see also \citet{paciorek2004nonstationary}) in which the aim is to reproduce spatially
non-stationary phenomena, and this is achieved by means of a Gaussian process
whose covariance function itself depends on another Gaussian process. This
idea was recently re-visited in \citet{lassi}, using the precision operator
viewpoint, rather than covariance function, and building on the explicit link
between Gaussian processes and stochastic partial differential equations
(SPDEs) \citet{matern_spde}. A different approach was adopted in \citet{damianou2013deep}
where a Gaussian process was directly composed with another Gaussian process;
furthermore the idea was implemented recursively, leading to what is referred to as deep Gaussian processes (DGP).
These ingenious constructions open up new possibilities for problems in non-parametric
inference and machine learning and the purpose of this paper is to establish, and
utilize, a common framework for their study. Relevant to our analysis is the early work in \citet{diaconis_freedman_1999} 
which studied iterations of random Lipschitz functions and the conditions required for their convergence.

\subsection{Our Contribution}
\label{ssec:O}
In the paper we make three main contributions:

\begin{itemize}
\item We demonstrate a unifying perspective on the hierarchical Gaussian processes
described in the previous subsection, leading to a wide class of deep Gaussian
processes, with a common framework within which new deep Gaussian processes can
be constructed.
\item By exploiting the fact that this common framework has a Markovian structure,
we interpret the depth of the process in terms of the ergodicity or
non-ergodicity of this process; in simple terms ergodic constructions have effective
depth given by the mixing time. 
\item We demonstrate how these processes may be used for inference; in particular
we show how a Metropolis-within-Gibbs construction across the levels of the
hierarchy can be used to derive sampling tools which are robust to the
level of resolution used to represent the functions on a computer.
\end{itemize}

We also describe numerical experiments which illustrate the theory,
and which demonstrate some of the limitations of the framework in 
the inference context, suggesting the need for further algorithmic
innovation and theoretical understanding.
{We now summarize the results and contributions by direct reference
to the main theorems in the paper.}{
\begin{itemize}
\item Theorem \ref{thm:triv} shows that a composition-based deep Gaussian process 
will, with sufficiently many layers, produce samples that are approximately constant. This pathology can be avoided by, for example, increasing the width of each hidden layer, or allowing each layer to depend on the input layer.

\item Theorem \ref{t:ergodic1} shows the ergodicity of a class of discretized deep Gaussian processes, constructed using non-stationary covariance \emph{functions}. As a consequence, there is little benefit in adding additional layers after a certain point. This observation elucidates the mechanism underlying the choices of DGPs with a small number of layers for inference in numerous papers, for example in \citet{cutajar2016random,salimbeni2017doubly,dai2015variational}.

\item Theorem \ref{t:added} establishes a similar result as Theorem \ref{t:ergodic1} on function space, for a different class of deep Gaussian processes constructed using non-stationary covariance \emph{operators}.

\item Theorem \ref{T:convolution} establishes the asymptotic properties of a deep Gaussian process formed by iterated convolution of fairly general classes of Gaussian random fields. Specifically it is shown that such processes will either converge weakly to zero or diverge as the number of layers is increased, and so they will provide little flexibility for inference in practice.
\end{itemize}}

\subsection{Overview}
\label{ssec:O2}
The general framework in which we place the existing literature, and which we
employ to analyze deep Gaussian processes, and to construct algorithms 
for related inference tasks, is as follows. We consider sequences of functions
$\{u_n\}$ which are conditionally Gaussian:
\begin{equation}
\label{eq:cg1}
\tag{CovOp}
u_{n+1}|u_n \sim N\bigl(m(u_n),C(u_n)\bigr);
\end{equation}
here $m(u_n)$ denotes the mean function and $C(u_n)$ the covariance operator.
We will also sometimes work with the covariance function representation,
in which case we will write
\begin{equation}
\label{eq:cg2}
\tag{GP}
u_{n+1}|u_n \sim \text{GP}\bigl(m(x;u_n),c(x,x';u_n)\bigr).
\end{equation}
{Note that the covariance function is the kernel of the covariance operator
when the latter is represented as an integral operator over the approximate domain
$D \subseteq \bbR^d$:
$$\bigl(C(u_n)\phi\bigr)(x)=\int c(x,x';u_n)\phi(x')dx'.$$
In most of the paper we consider the centred case where $m \equiv 0$,
although the flexibility of allowing for non-zero mean will be important
in some applications, as discussed in the conclusions.}  
When the mean is zero, the iterations 
\eqref{eq:cg1} and \eqref{eq:cg2} can be written in the form
\begin{equation}
\label{eq:cg3}
\tag{ZeroMean}
u_{n+1}=L(u_n)\xi_{n+1},
\end{equation}
{where $\{\xi_n\}$ form an i.i.d. Gaussian sequence and, for each $u$, $L(u)$ is
a linear operator. For example if the $\xi_n$ are white then the covariance
operator is $C(u)=L(u)L(u)^\top$ with $\top$ denoting the adjoint operation and
$L(u)$ is a Cholesky factor of $C(u).$
The formulation \eqref{eq:cg3} is useful in much of our analysis.
For the purpose of this paper, we will refer to any sequence of functions constructed as in \eqref{eq:cg3} as a deep Gaussian process.
}

In section \ref{sec:T} we discuss the hierarchical Gaussian constructions
referenced above, and place them in the setting of equations
\eqref{eq:cg1}, \eqref{eq:cg2} and \eqref{eq:cg3}.
Section \ref{sec:E} studies the ergodicity of the resulting deep Gaussian
processes, using the Markov chain which defines them. 
In section \ref{sec:I2} we provide supporting numerical experiments;
we give illustrations of draws from deep Gaussian process priors,
and we discuss inference. In the context of inference  we describe
a methodology for
MCMC, using deep Gaussian priors, which is defined in the function space limit
and is hence independent of the level of
resolution used to represent the functions $u_n;$ numerical
illustrations are given.
We conclude in section \ref{sec:C} in which we describe generalizations
of the settings considered in this paper, and highlight future directions.

{
\subsection{Notation}
\label{ssec:notation}

The structure of the deep Gaussian processes above means that they can be interpreted as Markov chains on a Hilbert space $\cH$ of functions. Let $\mathcal{B}(\cH)$ denote the Borel $\sigma$-algebra on $\cH$.
We denote by $\sP:\cH\times\mathcal{B}(\cH)\to\R$ the one-step transition probability distribution,
\begin{equation}
\label{eq:k1}
\sP(u,A) = \mathbb{P}(u_{n} \in A\,|\,u_{n-1} = u),
\end{equation}
and denote by $\sP^n:\cH\times\mathcal{B}(\cH)\to\R$ the $n$-step transition probability distribution,
\begin{equation}
\label{eq:k2}
\sP^n(u,A) = \mathbb{P}(u_n \in A\,|\,u_0 = u).
\end{equation}
Thus, for example, in the case of the covariance operator construction
\eqref{eq:cg1} we have
$$\sP(u,\cdot) = N\bigl(0,C(u)\bigr),$$
when the mean is zero.
This Markovian structure will be exploited when showing ergodicity, 
or lack of ergodicity, of the chains.  
}

\section{Four Constructions}
\label{sec:T}

This section provides examples of four constructions of deep Gaussian processes,
all of which fall into our general framework. The reader will readily design others.

\subsection{Composition}
\label{ssec:C}
{Let $D \subseteq \R^d$, $D'\subseteq \R^l$, $u_n: D \to \R^m$ and 
$F: \R^m \to D'$. If $\{\xi_n\}$ is a collection of i.i.d.
centred Gaussian processes taking values in the space of continuous functions
$C(D';\R^m)$ then we define the Markov chain }
\begin{equation}
\label{eq:MC1}
u_{n+1}(x) =\xi_{n+1}\Bigl(F\bigl(u_n(x)\bigr)\Bigr).
\end{equation}
{The case $m=l$, $F={\rm id}$ and $D=D'=\R^m$} was introduced in \citet{damianou2013deep}
and the generalization here 
is inspired by the formulation in \citet{duvenaud2014avoiding}. d{The case where two layers are employed could be interpreted as a form of warped Gaussian process: a generalization of Gaussian processes that have been used successfully in a number of inference problems \citet{snelson2004warped,schmidt2003bayesian}}

We note that the mapping $\xi \mapsto \xi \circ F \circ u$ is linear, and
we may thus define $L(u)$ by $L(u)\xi=\xi \circ F \circ u$; hence
the Markov chain may be written in the form \eqref{eq:cg3}. If $\xi_1 \sim N(0,\Sigma)$
then the Markov chain has the form \eqref{eq:cg1}, with mean zero
and  $C(u)=L(u)\Sigma L(u)^*;$
if {$\xi_1 \sim \text{GP}\bigl(0,k(z,z')\bigr)$} 
then the Markov chain has the form \eqref{eq:cg2} with
mean zero and $c(x,x';u)=k\Bigl(F\bigl(u(x)\bigr),F\bigl(u(x')\bigr)\Bigr).$

\subsection{Covariance Function}
\label{ssec:F}
Paciorek \citet{paciorek_thesis} gives a general strategy to construct 
anisotropic versions of isotropic covariance functions.
Let $\Sigma : \R^d \rightarrow \R^{d \times d}$ be such that $\Sigma(z)$ is symmetric positive definite for all $z \in \R^d$, and define the quadratic form
\[
Q(x,x') = (x-x')^T\left(\frac{\Sigma(x) + \Sigma(x')}{2}\right)^{-1} (x-x'), \qquad x,x' \in \R^d.
\]
If the isotropic correlation function $\rho_{S}(\cdot)$ is positive definite on $\R^d$, for all $d \in \N$, then the function 
\[
c(x,x') = \sigma^2\frac{2^\frac{d}{2} \det(\Sigma(x))^\frac{1}{4} \det(\Sigma(x'))^\frac{1}{4}}{\det(\Sigma(x) + \Sigma(x'))^\frac{1}{2}} \rho_{S}(\sqrt{Q(x,x')})
\]
is positive definite on $\R^d \times \R^d$ and may thus be used as a covariance function.
We make these statements precise below.
If we choose $\Sigma$ to depend on $u_n$ then this may be used as the
basis of a deep Gaussian process. To be concrete we choose
$$\Sigma(x)=F\bigl(u(x)\bigr)I_{d}$$
where $F:\R \to \R_{\geq 0}$ for $u:D \subseteq \R^d \to \R.$ We then write $c(x,x';u)$.
Now let $u_n:D \to \R$ and consider the Markov chain \eqref{eq:cg2}
in the mean zero case.
In \citet{paciorek_thesis} this iteration was considered over one-step
with $u_0 \sim \text{GP}(0,\sigma^2\rho_{S}(\|x-x'\|))$ and $u_1$ was
shown to exhibit interesting non-stationary effects. Here we generalize and
consider the deep process that results from this construction 
for arbitrary $n \in \N.$ By considering the covariance operator
$$\bigl(C(u)\varphi\bigr)(x)=\int_{\R^d} c(x,x';u)\varphi(x')\,\dee x'$$
we may write the iteration in the form \eqref{eq:cg1}. 
The form \eqref{eq:cg3} follows with $L(u)=C(u)^{\frac12}$ and $\xi_{n+1}$ being
white noise.
 
Various generalizations of this construction are possible, for example allowing
the pointwise variance of the process $\sigma^2$ to be spatially varying {\citet{heinonen2016non}}
and to depend on $u_n(x).$ These may be useful in applications, but we
confine our analysis to the simpler setting for expository purposes; however in Remark \ref{rem:cov} we discuss this generalization.

In order to make the statements made above precise,
let $\rho_\mathrm{S} : [0,\infty) \rightarrow \R$ be a stationary covariance kernel, where the covariance between locations $x$ and $y$ depends only on the Euclidean distance $\|x-y\|_2$. We make the following assumption on $\rho_\mathrm{S}$.

\begin{assumptions}\label{ass:posdef_stat}
\begin{enumerate}
\item[(i)] The covariance kernel $\rho_\mathrm{S}(\|x-y\|_2)$ is positive definite\footnote{If the double sum in this definition is only non-negative, we say that the kernel $\rho_S$ is positive semi-definite.
We are thus adopting the terminology used by Wendland \citet{wendland}, where the kernel $\rho_\mathrm{S}$ is called positive definite if the double sum in Assumptions \ref{ass:posdef_stat}(i) is positive, and positive semi-definite if the sum is non-negative. For historical reasons, there is an alternative terminology, used in for example \citet{paciorek_thesis}, where our notion of positive definite is referred to as strictly positive definite, and our notion of positive semi-definite is referred to as positive definite.}
on $\R^d \times \R^d$: for any $N \in \N$, $b \in \R^N\backslash\{0\}$ and pairwise distinct $\{x_i\}_{i=1}^N \subseteq \R^d$, we have
\[
\sum_{i=1}^N \sum_{j=1}^N b_i b_j \rho_\mathrm{S}\big(\|x_i-x_j\|_2\big) > 0.
\]
\item[(ii)] $\rho_\mathrm{S}$ is normalized to be a correlation kernel, i.e. $\rho_\mathrm{S}(0) = 1$.
\end{enumerate} 
\end{assumptions}

Using \citet[Theorem 6.11]{wendland}, sufficient conditions for $\rho_\mathrm{S}$ to fulfill Assumptions \ref{ass:posdef_stat}(i) are that $\rho_\mathrm{S}$, as a function of $x-y$, is continuous, bounded and in $L_1(\R^d)$, with a Fourier transform that is non-negative and non-vanishing. These sufficient conditions are satisfied, for example, for the family of Mat\`ern covariance functions and the Gaussian covariance. To satisfy Assumptions \ref{ass:posdef_stat}(ii), any positive definite kernel $\tilde \rho_\mathrm{S}$ can simply be rescaled by $\tilde \rho_\mathrm{S}(0)$.

We now have the following proposition,
a slightly weaker version of which is proved in \citet{paciorek_thesis}, where it is shown that $\rho(\cdot, \cdot)$ is positive semi-definite if $\rho_\mathrm{S}$ is positive semi-definite. Our proof, which is in the Appendix,
follows closely that of \citet[Theorem 1]{paciorek_thesis}, 
but sharpens the result using a characterization of 
positive definite kernels proved in \citet[Theorem 7.14]{wendland}.

\begin{proposition}\label{thm:nonstat} Let Assumptions \ref{ass:posdef_stat} hold. Suppose $\Sigma : \R^d \rightarrow \R^{d \times d}$ is such that $\Sigma(z)$ is symmetric positive definite for all $z \in \R^d$, and define the quadratic form
\[
Q(x,x') = (x-x')^T\left(\frac{\Sigma(x) + \Sigma(x')}{2}\right)^{-1} (x-x'), \qquad x,x' \in \R^d.
\]
Then the function $\rho(\cdot, \cdot)$, defined by
\[
\rho(x,x') = \frac{2^\frac{d}{2} |\Sigma(x)|^\frac{1}{4} |\Sigma(x')|^\frac{1}{4}}{|\Sigma(x) + \Sigma(x')|^\frac{1}{2}} \rho_\mathrm{S}\Big(\sqrt{Q(x,x')}\Big), 
\]
is positive definite on $\R^d \times \R^d$, for any $d \in \N$, and is a non-stationary correlation function.
\end{proposition}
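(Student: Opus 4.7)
The plan follows Paciorek's strategy of writing $\rho$ as a non-negative superposition of ``Gaussian'' kernels, each of which is manifestly positive definite by a convolution/Gram argument, and then sharpening to strict positive definiteness via the Fourier characterisation from Wendland's Theorem 7.14.

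First, for each fixed $t > 0$, I would establish strict positive definiteness of the auxiliary kernel
\[
K_t(x,x') \,:=\, \frac{2^{d/2}|\Sigma(x)|^{1/4}|\Sigma(x')|^{1/4}}{|\Sigma(x)+\Sigma(x')|^{1/2}}\,e^{-tQ(x,x')}.
\]
Let $\phi_A$ denote the density of a centred Gaussian on $\R^d$ with covariance $A$, and recall the convolution identity $\phi_A * \phi_{A'} = \phi_{A+A'}$. Apply this at $x-x'$ with $A = \Sigma(x)/(4t)$ and $A' = \Sigma(x')/(4t)$; using $(A+A')^{-1} = 4t(\Sigma(x)+\Sigma(x'))^{-1}$ together with $Q(x,x') = 2(x-x')^T(\Sigma(x)+\Sigma(x'))^{-1}(x-x')$, the exponent becomes exactly $-tQ(x,x')$. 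Tracking constants yields
\[
K_t(x,x') \,=\, \int_{\R^d} f_{x,t}(u)\,f_{x',t}(u)\,\dee u, \qquad f_{x,t}(u) := (\pi/t)^{d/4}\,|\Sigma(x)|^{1/4}\,\phi_{\Sigma(x)/(4t)}(x-u).
\]
This exhibits $K_t$ as a Gram kernel in $L^2(\R^d)$, hence positive semi-definite; since Gaussian densities centred at distinct points are linearly independent in $L^2$, the kernel is in fact strictly positive definite.

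Next, I would use the hypothesis on $\rho_\mathrm{S}$ to represent it as a scale mixture of Gaussians. The sufficient conditions cited (boundedness, $L^1$ integrability, and a non-negative, non-vanishing Fourier transform) give positive definiteness of $\rho_\mathrm{S}(\|\cdot\|)$ in every dimension, which by Schoenberg's theorem is equivalent to complete monotonicity in $r^2$ and hence to the existence of a non-negative finite Borel measure $\mu$ on $[0,\infty)$ with
\[
\rho_\mathrm{S}(\sqrt{s}) \,=\, \int_0^\infty e^{-ts}\,\dee\mu(t), \qquad s \geq 0.
\]
Substituting $s = Q(x,x')$ and pulling the determinant factor inside gives
\[
\rho(x,x') \,=\, \int_0^\infty K_t(x,x')\,\dee\mu(t),
\]
a non-negative superposition of strictly positive definite kernels, hence itself positive definite. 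Normalisation $\rho(x,x)=1$ follows from Assumption \ref{ass:posdef_stat}(ii) and the identity $|2\Sigma(x)|^{1/2} = 2^{d/2}|\Sigma(x)|^{1/2}$; non-stationarity is immediate from the spatial dependence of $\Sigma$.

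The main obstacle is the sharpening from positive semi-definite (Paciorek's conclusion) to strictly positive definite. Semi-definiteness comes for free from any Gram representation, but strict positivity of $\sum_{i,j} b_i b_j \rho(x_i,x_j)$ for non-zero $b$ and distinct $\{x_i\}$ requires that the mixture cannot degenerate on any such finite configuration. To close this step cleanly I would appeal to Wendland's Theorem 7.14, which characterises strict positive definiteness of a translation-invariant kernel through the non-vanishing of its Fourier transform; combined with the non-vanishing-Fourier-transform hypothesis on $\rho_\mathrm{S}$, this ensures that $\mu$ is supported on a set rich enough that the strict inequality survives passage through the superposition, and hence transfers to $\rho$.
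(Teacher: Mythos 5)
Your proposal is correct and follows essentially the same route as the paper's proof: the scale-mixture representation of $\rho_\mathrm{S}$ from \citet[Theorem 7.14]{wendland}, the Gaussian convolution identity exhibiting each fixed-$t$ kernel as a Gram kernel of Gaussian densities, linear independence of those densities at distinct centres for strictness, and the richness of the mixing measure to transfer strictness to the superposition. The only place the paper is sharper is in the final step, where "supported on a set rich enough" is made precise as the measure not being concentrated at $t=0$ (the point where the integrand degenerates), so that the strictly positive integrand on $(0,\infty)$ receives positive mass.
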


Non-stationary covariance functions $c(x,y)$, for which $c(x,x) \neq 1$, can be obtained from the non-stationary correlation function $\rho(x,y)$ through multiplication by a standard deviation function $\sigma : \R^d \rightarrow \R$, in which case we have $c(x,y) = \sigma(x) \sigma(y) \rho(x,y)$. Since the product of two positive definite kernels is also positive definite by \citet[Theorem 6.2]{wendland}, the kernel $c(x,y)$ can be ensured to be positive definite by a proper choice of $\sigma$. We discuss generalizations such as this in the
conclusions section \ref{sec:C}.

We are interested in studying the behaviour of Gaussian processes with non-stationary correlation functions $\rho(x,y)$ of the form derived in Proposition \ref{thm:nonstat}, in the particular case where the matrices $\Sigma(z)$ are derived from another Gaussian process. 
Specifically, we consider the following hierarchy of conditionally Gaussian processes 
on a bounded domain $D \subseteq \R^d$ defined as follows:
\begin{subequations}
\label{eq:pac}
\begin{align}
u_0 &\sim \mathrm{GP}(0, \rho_{\mathrm{S}}(\cdot)), \\
u_{n+1} |u_{n} &\sim \mathrm{GP}(0, \rho(\cdot, \cdot;u_{n})), \quad \text{for } n \in \N.
\end{align} 
\end{subequations}
Here, $\rho(\cdot, \cdot;u_{n})$ denotes a non-stationary correlation function constructed from $\rho_\mathrm{S}(\cdot)$ as in Proposition \ref{thm:nonstat}, with the map $\Sigma$ defined through $u_{n}$. Typical choices for $\Sigma$ are $\Sigma(z) = \left(u_{n}(z)\right)^2 \, \mathrm{I}_d$ and $\Sigma(z) = \exp(u_{n}(z)) \, \mathrm{I}_d$. Choices such as the
first of these lead to the possibility of positive semi-definite $\Sigma$ and, in the
worst case, $\Sigma \equiv 0.$ If $\Sigma \equiv 0$ the resulting correlation function is
given by
$$\rho_{\mathrm{S}}(0)=1, \quad \text{and} \quad \rho_{\mathrm{S}}(r)=0 \quad \text{for any } r>0.$$
This does not correspond to any (function valued) Gaussian process on $\R^d$ \citet{kallianpur}: heuristically the
resulting process would be a white noise 
process, but normalized to  zero. However, it is possible to sample from 
any set of 
finite dimensional distributions when $\Sigma \equiv 0$: 
the correlation matrix is then the identity.
To allow for the possibility of $F(\cdot)$ taking the value zero,
we therefore only study the finite dimensional process defined as follows:
\begin{subequations}
\label{eq:pac2}
\begin{align}
\bu_0 &\sim {N}(0, \bR_\mathrm{S}), \\
\bu_{n+1} \; | \; \bu_{n} &\sim {N}(0, \bR(\bu_{n})), \quad \text{for } n \in \N.
\end{align} 
\end{subequations}
The vector $\bu_{n}$ has entries $(\bu_n)_i = u_n(x_i)$. Here, $\bR_\mathrm{S}$ is the covariance matrix with entries $(\bR_\mathrm{S})_{ij} = \rho_\mathrm{S}(\|x_i-x_j\|_2)$, and $\bR(\bu_{n})$ is the covariance matrix with entries $(\bR(\bu_{n}))_{ij} = \rho(x_i, x_j;u_{n})$. 
The set $\{x_j\}$ comprises a finite set of points in $\R^d.$

We may now generalize Proposition \ref{thm:nonstat} to allow for $\Sigma$ becoming zero.
In order to do this we make the following assumptions:

\begin{assumptions} \label{ass:posdef}
\begin{enumerate}[{(i)}]
\item We have $\Sigma(z) = G(z) \mathrm{I}_d$, for some non-negative, bounded function $G : \R \rightarrow \R_{\geq 0}$. 
\item The correlation function $\rho_\mathrm{S}$ is continuous, with $\lim_{r \rightarrow \infty} \rho_{\mathrm{S}}(r) = 0$.
\end{enumerate}   
\end{assumptions}

We then have the following result on the positive-definiteness of $\rho(\cdot,\cdot)$,

\begin{proposition}\label{lem:posdef} Let Assumptions \ref{ass:posdef_stat} and \ref{ass:posdef} hold. Then the kernel $\rho(\cdot,\cdot)$ defined in  Proposition \ref{thm:nonstat}
is positive definite on $\R^{d} \times \R^{d}.$ 
\end{proposition}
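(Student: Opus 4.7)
The new difficulty compared with Proposition \ref{thm:nonstat} is that $G$, and hence $\Sigma(z)=G(z)I_d$, may vanish, so the Paciorek formula for $\rho(x,x')$ is not a priori defined when $G(x)+G(x')=0$. My plan is first to extend $\rho$ by continuity in these degenerate cases using the regularisation $\Sigma_\eps=(G+\eps)I_d$, and then to apply a block decomposition keyed to the zero set of $G$ to reduce positive definiteness to Proposition \ref{thm:nonstat}, which already handles the non-degenerate case.

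For the extension, I set $\rho(x,x'):=\lim_{\eps\downarrow 0}\rho_\eps(x,x')$ whenever $G(x)+G(x')=0$, where
\[
\rho_\eps(x,x')=\left(\frac{2\sqrt{(G(x)+\eps)(G(x')+\eps)}}{G(x)+G(x')+2\eps}\right)^{d/2}\rho_\mathrm{S}\!\left(\sqrt{\frac{2\|x-x'\|_2^2}{G(x)+G(x')+2\eps}}\right).
\]
Four cases arise. (a) If $G(x),G(x')>0$ the limit reproduces the formula of Proposition \ref{thm:nonstat}. (b) If $x=x'$ and $G(x)=0$ the prefactor is identically $1$ and $\rho_\mathrm{S}(0)=1$, so $\rho(x,x)=1$. (c) If $x\ne x'$ and exactly one of $G(x),G(x')$ vanishes, the prefactor tends to $0$ while $\rho_\mathrm{S}$ is bounded by $1$, so $\rho(x,x')=0$. (d) If $x\ne x'$ and $G(x)=G(x')=0$, the prefactor is identically $1$ but the argument of $\rho_\mathrm{S}$ diverges, and Assumption \ref{ass:posdef}(ii) then forces $\rho(x,x')=0$.

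Given distinct $x_1,\dots,x_N\in\R^d$ and $b\in\R^N\setminus\{0\}$, I partition the indices as $I_+=\{i:G(x_i)>0\}$ and $I_0=\{i:G(x_i)=0\}$. Since every off-diagonal $\rho(x_i,x_j)$ with $i$ or $j$ in $I_0$ vanishes and $\rho(x_i,x_i)=1$,
\[
\sum_{i,j=1}^N b_ib_j\rho(x_i,x_j)=\sum_{i,j\in I_+}b_ib_j\rho(x_i,x_j)+\sum_{i\in I_0}b_i^2.
\]
If every $b_i$ with $i\in I_+$ is zero, then $b\ne 0$ forces some $b_i$ with $i\in I_0$ to be non-zero, so the second sum is strictly positive. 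Otherwise, pick any $\tilde G:\R^d\to\R_{>0}$ agreeing with $G$ on the finite set $\{x_i:i\in I_+\}$ (for instance $\tilde G\equiv 1$ off these points); the non-stationary kernel depends on $G$ only through its values at the two arguments, so the first sum equals the corresponding double sum for $\tilde\rho$, which is strictly positive by Proposition \ref{thm:nonstat} applied to $\tilde\Sigma=\tilde GI_d$, now positive definite on all of $\R^d$.

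The main obstacle is the extension step, specifically case (d): there the prefactor neither vanishes nor helps, and one must appeal to the decay $\rho_\mathrm{S}(r)\to 0$ as $r\to\infty$ supplied by Assumption \ref{ass:posdef}(ii). This is precisely the degeneracy the authors flag in the remark about $\Sigma\equiv 0$ preceding the proposition, and it is the reason Assumption \ref{ass:posdef}(ii) must be imposed on top of Assumption \ref{ass:posdef_stat}.
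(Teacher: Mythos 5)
Your proof is correct and follows essentially the same route as the paper's: define $\rho$ at the degenerate points by a limiting argument (with the same case analysis --- your case (d) being exactly where the paper also invokes Assumption \ref{ass:posdef}(ii)), observe that the resulting matrix acts as the identity on the zero set of $G$ with vanishing off-diagonal coupling, and reduce to Proposition \ref{thm:nonstat} on the non-degenerate block. Your explicit $I_+/I_0$ decomposition of the quadratic form and the auxiliary strictly positive $\tilde G$ are a slightly cleaner packaging than the paper's entrywise limit of the matrix $\bR$ followed by the remark that ``a similar argument holds'' for multiple zeros, but the underlying argument is the same.
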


\begin{remark}
\label{rem:new}
This proposition applies to the process \eqref{eq:pac2} with $\Sigma(z) = F\bigl(u_n(z)\bigr) \mathrm{I}_d$ and $F : \R \rightarrow \R_{\geq 0}$ locally bounded, by taking $G=F\circ u_n$, proving that $\rho(\cdot,\cdot;u_{n})$  
is positive definite on $D \times D$ 
for all bounded functions $u_{n}$ on $D$.
Here we generalize the notion of positive-definite in the obvious way
to apply on $D \subseteq \R^d$ rather than on the whole of $\R^d.$
\end{remark}

\subsection{Covariance Operator}
\label{ssec:O3}
Here we demonstrate how precision (inverse covariance) operators may be used to make deep
Gaussian processes. Because precision operators encode conditional independence
and sparsity this can be a very attractive basis for fast computations \citet{matern_spde}.
Our approach is inspired by the hierarchical
Gaussian process introduced in \citet{lassi}, where one-step of the
Markov chain which we introduce here was considered.
Let $D \subseteq \R^d$, $u_n: D \to \R$ and 
$X:=C(D;\R)$. Assume that
$F:\R \to \R_{\ge 0}$ is a bounded function. 
Let $C_{-}$ be a covariance operator associated to a Gaussian process taking
values in $X$ and let $P$ be the associated precision operator. 
Define the multiplication operator $\Gamma(u)$ by $\bigl(\Gamma(u)v\bigr)(x)=F\bigl(u(x)\bigr)v(x)$ and the covariance operator $C(u)$ by
$$C(u)^{-1}=P+\Gamma(u)$$
and consider the Markov chain \eqref{eq:cg1} with mean zero; this defines our deep Gaussian process.
We note that formulation \eqref{eq:cg2} can be obtained by observing that the
covariance function $c(u):=c(x,x';u)$ is the Green's function associated with the
precision operator for $C(u)$:
$$C(u)^{-1}c(\cdot,x';u)=\delta_{x'}(\cdot)$$
where $\delta_{x'}$ is a Dirac delta function centred at point $x'$.
Computationally we will typically choose $P$ to be a differential operator,
noting that then fast methods may be employed to sample the Gaussian process
$u_{n+1}|u_n$ by means of SPDEs \citet{matern_spde,stuart2010inverse}. d{If $P$ is chosen as a differential operator, then the order of this operator will be related to the order of regularity of samples, and $F$ will be related to the length scale of the samples. These relations are made explicit in the case of certain Whittle-Mat\'ern distributions when $F$ is constant \citet{matern_spde}; some boundary effects may be present when $D \neq \R^d$, though methodology is available to ameliorate these \citet{daon2016mitigating}.}
As in the previous subsection, the form \eqref{eq:cg3} 
follows with $L(u)=C(u)^{\frac12}$ and $\xi_{n+1}$ being white noise.

Generalizations of the construction in this subsection are possible,
and we highlight
these in subsection \ref{sec:C}; however for expository purposes we confine
our analysis to the setting described in this subsection.
For theoretical investigation of the equivalence, as measures, of
Gaussians defined by addition of an operator to 
a given precision operator, see \citet{pinski2015kullback}.

\subsection{Convolution}
\label{ssec:conv}

We consider the case (\ref{eq:cg3}) where $L(u)\xi := u*\xi$ is a 
convolution. To be concrete we let $D=[0,1]^d$ and construct
a sequence of functions $u_n:D \to \R$ (or $u_n:D \to \bbC$) defined via the iteration 
\[
u_{n+1}(x) = (u_n * \xi_{n+1})(x) := \int_{[0,1]^d} u_n(x-y)\xi_{n+1}(y)\,dy,
\]
where $\{\xi_n\}$ are a sequence of i.i.d. centred real-valued
Gaussian random functions on $D$.
Here we implicitly work with periodic extension of $u_n$ from $D$ to 
the whole of $\R^d$ in order to define the convolution.

\section{The Role of Ergodicity}
\label{sec:E}

The purpose of this section is to demonstrate that the iteration \eqref{eq:cg3}
is, in many situations, ergodic. This has the practical implication that the
effective depth of the deep Gaussian process is limited by the mixing time of
the Markov chain. In some cases the ergodic behaviour may be trivial (convergence
to a constant). Furthermore, even if the chain is not ergodic, the large iteration number dynamics may
blow-up, prohibiting use of the iteration at significant depth. The take home message
is that in many cases the effective depth is not that great. Great care will be needed
to design deep Gaussian processes whose depth, and hence approximation power, is
substantial. This issue was first identified in \citet{duvenaud2014avoiding},
and we here provide a more general analysis of the phenomenon within the
broad framework we have introduced for deep Gaussian processes.

\subsection{Composition}
\label{ssec:C2}

We first consider the case where the iteration is defined by (\ref{eq:MC1}), which includes examples considered in \citet{damianou2013deep, duvenaud2014avoiding}. In \citet{duvenaud2014avoiding} it was observed that after a number of iterations, sample paths are approximately piecewise constant. We investigate this effect in the context of ergodicity. We first make two observations:
\begin{enumerate}[(i)]
\item if $u_0$ is piecewise constant, then $u_n$ is piecewise constant for all $n \in \N$;
\item if $u_0$ has discontinuity set $\mathsf{Z}_0$, and $\mathsf{Z}_n$ denotes the discontinuity set of the $n$th iterate, then $\mathsf{Z}_{n+1}\subseteq \mathsf{Z}_n$ for all $n \in \N$.
\end{enumerate}
Due to point (ii) above, if the sequence $\{u_n\}$ is to be ergodic, then necessarily it must be the case that $\mathsf{Z}_n\to\varnothing$, or else the process will have retained knowledge of the initial condition. In particular, if the initial condition is piecewise constant, then ergodicity would force the limit to be constant in space.

In what follows we assume that the iteration is given by
{\[
u_{n+1}(x) = \xi_{n+1}\big(u_n(x)\big),\quad \xi_{n+1}^j \sim \mathrm{GP}\big(0,h(\|x-x'\|_2)\big)\;\text{i.i.d}.
\]}
where $h$ is a stationary covariance function. We therefore make the choice 
$m = l$ and $F = \mathrm{id}$ in (\ref{eq:MC1}) so that we are in the same setup as \citet{damianou2013deep, duvenaud2014avoiding}; the inclusion of more general maps $F$ is discussed in Remark \ref{rem:comp_F}. Then for any $x,x' \in \R$ we have
{\[
\begin{pmatrix}
u_{n+1}^j(x)\\
u_{n+1}^j(x')
\end{pmatrix}\bigg|u_n \sim N\bigg(\begin{pmatrix}0\\0\end{pmatrix},\begin{pmatrix}h(0) & h\big(\|u_n(x)-u_n(x')\|_2\big)\\h\big(\|u_n(x)-u_n(x')\|_2\big) & h(0)\end{pmatrix}\bigg).
\]}

A common choice of covariance function is the squared exponential kernel:
\begin{align}
\label{eq:squared_exp}
h(z) = \sigma^2 e^{-z^2/2w^2}
\end{align}
where $\sigma^2,w^2 > 0$ are scalar parameters. In \citet{duvenaud2014avoiding}, {in the case $m=d=1$}, the choice $\sigma^2/w^2 = \pi/2$ is made above to ensure that the expected magnitude of the derivative remains constant through iterations. We show in the next proposition that if $\sigma^2, w^2$ are chosen such that {$\sigma^2 < w^2/m$}, then the limiting process is trivial in a sense to be made precise.

\begin{theorem}
\label{thm:triv}
Assume that $h(\cdot)$ is given by the squared exponential kernel (\ref{eq:squared_exp})
and that $u_0$ is bounded on bounded sets almost-surely. {Then if $\sigma^2 < w^2/m$,
\[
\bbP\big(\|u_n(x)-u_n(x')\|_2 \to 0\text{ for all }x,x' \in D\big) = 1
\]}
where $\PP$ denotes the law of the process $\{u_n\}$ over the probability space $\Omega$.
\end{theorem}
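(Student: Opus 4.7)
The plan is to track the squared pointwise distance $D_n(x,x') := \|u_n(x)-u_n(x')\|_2^2$ and show it contracts to zero along the iteration. First, I would compute its conditional distribution. The display above the statement gives that each component $u_{n+1}^j(x)-u_{n+1}^j(x')$ is, conditional on $u_n$, centred Gaussian with variance
\[
2\bigl(h(0)-h(\|u_n(x)-u_n(x')\|_2)\bigr) = 2\sigma^2\bigl(1 - e^{-D_n(x,x')/(2w^2)}\bigr).
\]
Since the $m$ components are independent conditional on $u_n$, summing gives
\[
\bbE[D_{n+1}(x,x')\,|\,u_n] = 2m\sigma^2\bigl(1 - e^{-D_n(x,x')/(2w^2)}\bigr).
\]

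The next step is to linearize. Using the elementary inequality $1-e^{-t}\le t$ for $t\ge 0$ with $t = D_n/(2w^2)$ yields
\[
\bbE[D_{n+1}(x,x')\,|\,u_n] \le \frac{m\sigma^2}{w^2}\,D_n(x,x') = \alpha\, D_n(x,x'),
\]
with $\alpha := m\sigma^2/w^2 < 1$ by hypothesis. Iterating the tower property gives $\bbE[D_n(x,x')] \le \alpha^n \bbE[D_0(x,x')]$; the assumption that $u_0$ is bounded on bounded sets ensures $\bbE[D_0(x,x')] < \infty$ for $x,x'$ in the bounded domain $D$. A quick Markov/Borel--Cantelli argument applied to the summable bounds $\bbP(D_n(x,x') > \eps) \le \alpha^n\bbE[D_0(x,x')]/\eps^2$ then gives $D_n(x,x')\to 0$ almost surely for each fixed pair. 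Equivalently, $\alpha^{-n}D_n(x,x')$ is a non-negative supermartingale with respect to the filtration generated by $\{u_k\}$, so it converges a.s.\ to a finite limit, forcing $D_n(x,x')\to 0$ a.s.

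The remaining step is to upgrade from pointwise a.s.\ convergence to the simultaneous statement ``for all $x,x'\in D$'' with probability one. I would do this by fixing a countable dense subset $\{x_k\}\subseteq D$, intersecting the countably many probability-one events to get a single full-measure event on which $D_n(x_k,x_\ell)\to 0$ for every pair of indices, and then appealing to continuity of the iterates: since the squared exponential kernel yields continuous Gaussian sample paths and $u_{n+1}$ is the composition $\xi_{n+1}\circ u_n$, continuity of $u_n$ propagates inductively from continuity of $u_0$ (or is handled by restricting to the closure of the range, which is bounded by the induction). The main obstacle is this last step: ensuring that the a.s.\ event on which the $D_n$ vanish for dense pairs genuinely extends to all pairs requires care in controlling the modulus of continuity of $u_n$ uniformly in $n$, or alternatively restricting the statement to its natural countable interpretation before invoking continuity. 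Everything else is a one-line conditional-Gaussian computation followed by the contraction $\alpha<1$.
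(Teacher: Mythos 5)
Your proposal is correct and follows essentially the same route as the paper's proof: the elementary bound $1-e^{-t}\le t$ to get the contraction $\mathbb{E}[\,\|u_{n+1}(x)-u_{n+1}(x')\|_2^2\mid u_n\,]\le (m\sigma^2/w^2)\|u_n(x)-u_n(x')\|_2^2$, the tower property, Markov plus Borel--Cantelli for each fixed pair, and finally a countable dense subset together with almost-sure continuity of sample paths to pass to all $x,x'\in D$. The supermartingale remark is a harmless alternative to Borel--Cantelli, and the concern you raise about the final density/continuity step is handled in the paper exactly as you suggest, by invoking almost-sure continuity of the iterates.
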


\begin{proof}

Since $1-e^{-x} \le x$ for $x \ge 0$ it follows that, for all $z \in \R$,
\[
2h(0) - 2h(z) \leq \frac{\sigma^2}{w^2}z^2,
\]
with equality when $z=0$. Then we have
{\begin{align*}
\mathbb{E}\big(\|u_{n}(x)-u_{n}(x')\|_2^2\big|u_{n-1}\big) &= \sum_{j=1}^m\mathbb{E}\big(|u_{n}^j(x)-u_{n}^j(x')|^2\big|u_{n-1}\big)\\
&=\sum_{j=1}^m \left(2h(0) - 2h\big(\|u_{n-1}(x)-u_{n-1}(x')\|_2\big)\right)\\
&\leq m\frac{\sigma^2}{w^2}\|u_{n-1}(x)-u_{n-1}(x')\|_2^2
\end{align*}
and so using induction and the tower property of conditional expectations,
\begin{align*}
\mathbb{E}\|u_{n}(x)-u_{n}(x')\|_2^2 &\leq \bigg(\frac{m\sigma^2}{w^2}\bigg)\mathbb{E}\|u_{n-1}(x)-u_{n-1}(x')\|_2^2\\
&\leq \bigg(\frac{m\sigma^2}{w^2}\bigg)^{n}\mathbb{E}\|u_0(x)-u_0(x')\|_2^2\\
&\leq \bigg(\frac{m\sigma^2}{w^2}\bigg)^{n} \kappa(x,x')
\end{align*}
for some constant $\kappa(x,x')$. By the Markov inequality, we see that for any $\eps > 0$,
\begin{align}
\label{eq:comp_markov}
\PP\big(\|u_n(x)-u_n(x')\|_2 \geq \eps) \leq \frac{1}{\eps^2}\bigg(\frac{m\sigma^2}{w^2}\bigg)^{n}\kappa(x,x'),
\end{align}
and so applying the first Borel-Cantelli lemma we deduce that
\[
\PP\bigg(\limsup_{n\to\infty} \|u_n(x)-u_n(x')\|_2 \geq \eps\bigg) = 0
\]
since $\sigma^2 < w^2/m$. The above can be rephrased as the statement that for any $\eps > 0$ and any $x,x' \in D$, there exists $\Omega(\eps,x,x')\subseteq\Omega$ with $\PP(\Omega(\eps,x,x')) = 1$ such that for any $\omega \in \Omega(\eps,x,x')$ there exists an $N \in \N$ such that for any $n\geq\N$, $\|u_n(x;\omega)-u_n(x';\omega)\|_2 < \eps$. Let $\{q_j\}$ be a countable dense subset of $D$, and define
\[
\Omega_* = \bigcap_{k,i,j\in\N} \Omega\bigg(\frac{1}{k},q_i,q_j\bigg),
\]
noting that $\PP(\Omega_*) = 1$. Then for any $\omega \in \Omega_*$, $x,x' \in \{q_j\}$ and $\eps > 0$ there exists an $N \in \N$ such that for any $n \geq \N$, $\|u_n(x;\omega)-u_n(x';\omega)\|_2 < \eps$. Since sample paths are almost-surely continuous, the above can be extended to all $x,y \in \R$, so that
\[
\bbP\big(\|u_n(x)-u_n(x')\|_2 \to 0\text{ for all }x,x' \in D\big) = 1.
\]
}
\end{proof}

{
\begin{remark}
\label{rem:comp_F}
\begin{enumerate}
 \item If a more general transformation map $F:\R^m\to D'$ is included, then the above result still holds provided we take $\sigma^2 < w^2/(\|F'\|_\infty m)$. d{The convergence to a constant hence occurs when the length scale $w$ is large or $\|F'\|_\infty$ is small (so each Gaussian random field doesn't change too rapidly across the domain), or when the amplitude $\sigma$ is small (so inputs are not warped too far).}
 \item The condition of the above theorem is less likely to be satisfied as the width $m$ of each layer is increased, and so this triviality pathology is unlikely to arise for large $m$; this may be observed in practice numerically.
 \item Following \citet{neal1995bayesian,duvenaud2014avoiding}, recent works such as \citet{dai2015variational,cutajar2016random} connect all layers to the input layer in order to avoid certain pathologies. The Markovian structure of the process is maintained in this case: with the above notation, the process is then defined by
 \[
 u_{n+1}(x) = \xi_{n+1}(u_n(x),x),\quad \xi_{n+1}^j \sim \mathrm{GP}\big(0,h(\|x-x'\|_2)\big)\;\text{i.i.d},
 \]
 where now $\xi_n:\R^m\times\R^d\to\R^m$. Defining $\beta = m\sigma^2/w^2 < 1$, if $\sigma \geq 1$ we may use the same argument as the proof above to deduce that
\begin{align*}
\mathbb{E}\big(\|u_{n}(x)-u_{n}(x')\|_2^2\big|u_{n-1}\big) \leq \beta\|u_{n-1}(x)-u_{n-1}(x')\|_2^2 + \beta\|x-x'\|_2^2,
\end{align*}
which leads to
\begin{align*}
\mathbb{E}\|u_{n}(x)-u_{n}(x')\|_2^2 &\leq \beta^{n}\mathbb{E}\|u_0(x)-u_0(x')\|_2^2 + \beta\left(\frac{1-\beta^n}{1-\beta}\right)\|x-x'\|_2^2.
\end{align*}
The right hand side does not vanish as $n\to\infty$, and so we can no longer use the first Borel-Cantelli lemma to reach the same conclusion as the case where the layers are not connected to the input layer. This could provide some intuition as to why including the connection of each layer to the input layer provides greater stability than not doing so.
 \end{enumerate}
\end{remark}}

\subsection{Covariance Function}
\label{ssec:F2}

In order to study ergodicity of the deep Gaussian process
defined through covariance functions, we will restrict attention 
in the remainder of this subsection to hierarchies of finite-dimensional 
multivariate Gaussian random variables as in \eqref{eq:pac2}.
Note that although we have here defined $\bu_0 \sim {N}(0, \bR_\mathrm{S})$, following e.g. \citet{paciorek_thesis}, the ergodicity of the deep Gaussian process will be proved for fixed $u_0 \in \R^N$ (cf Theorem \ref{t:ergodic1}).
The following result is immediate from Proposition \ref{lem:posdef}.

\begin{corollary} Let Assumptions \ref{ass:posdef_stat} and \ref{ass:posdef} hold. Then the covariance matrix $\bR(\bu_n)$ is positive definite for all $\bu_n \in C$, for any compact subset of $C \subseteq \R^N$.
\end{corollary}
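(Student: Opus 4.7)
The plan is to derive this as a direct specialization of Proposition \ref{lem:posdef} (via Remark \ref{rem:new}), using compactness of $C$ to convert the local boundedness of $F$ into the global boundedness of $G$ required by Assumption \ref{ass:posdef}(i), and then specializing positive-definiteness of the kernel on $\R^d\times\R^d$ to positive-definiteness of the finite matrix $\bR(\bu_n)$.

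First I would fix a compact set $C \subseteq \R^N$ and an arbitrary $\bu_n \in C$. Since $C$ is bounded in $\R^N$, the set $B := \{(\bu_n)_i : i = 1,\ldots,N,\ \bu_n \in C\}$ is a bounded subset of $\R$. Local boundedness of $F:\R \to \R_{\geq 0}$ (as in Remark \ref{rem:new}) then gives $M := \sup_{b\in B} F(b) < \infty$. The matrix $\bR(\bu_n)$ depends on $F$ only through the finitely many values $F((\bu_n)_1),\ldots,F((\bu_n)_N)$, so I may replace $F$ by a globally bounded, non-negative $\tilde F:\R \to \R_{\geq 0}$ (e.g.\ the truncation $\tilde F = \min(F,M)$) without altering $\bR(\bu_n)$. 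Likewise, choose any bounded function $\tilde u_n : \R^d \to \R$ with $\tilde u_n(x_i) = (\bu_n)_i$ for $i=1,\ldots,N$. Then $G := \tilde F \circ \tilde u_n$ is non-negative and bounded on $\R^d$, so the map $\Sigma(z) = G(z)\,\mathrm{I}_d$ satisfies Assumption \ref{ass:posdef}(i).

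With Assumptions \ref{ass:posdef_stat} and \ref{ass:posdef} in hand, Proposition \ref{lem:posdef} yields that the kernel $\rho(\cdot,\cdot;\tilde u_n)$ constructed via $\Sigma$ is positive definite on $\R^d\times\R^d$. Applying the definition of positive-definiteness to the pairwise distinct points $x_1,\ldots,x_N$ that define $\bR(\bu_n)$, the $N\times N$ matrix with entries $\rho(x_i,x_j;\tilde u_n)$ is positive definite. Since $\rho(x_i,x_j;\tilde u_n) = \rho(x_i,x_j; u_n) = (\bR(\bu_n))_{ij}$ by construction, this is the matrix $\bR(\bu_n)$, finishing the argument.

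There is no substantive obstacle here; this is genuinely a corollary. The only bookkeeping point to keep straight is that Proposition \ref{lem:posdef} is stated for a globally bounded $G$, whereas for the Markov chain \eqref{eq:pac2} one only has local boundedness of $F$ — a gap that is closed immediately by the compactness of $C$ together with the fact that only finitely many values of $F$ enter $\bR(\bu_n)$.
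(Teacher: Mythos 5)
Your argument is correct and matches the paper's intent exactly: the paper offers no separate proof, stating only that the corollary is ``immediate from Proposition \ref{lem:posdef}'' via the mechanism spelled out in Remark \ref{rem:new} (take $G = F\circ u_n$, using local boundedness of $F$ and boundedness of the relevant values to satisfy Assumptions \ref{ass:posdef}). Your write-up simply makes that bookkeeping explicit, so there is nothing to add.
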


Note that, because we have chosen to work with a correlation kernel, we have
\begin{equation}
\label{eq:Tr}
{\rm Tr}\bigl(\bR(\bu_{n})\bigr)=N.
\end{equation}
We will use this fact explicitly in the ergodicity proof; however it may be
relaxed as discussed in the Remark \ref{rem:cov} below.

We view the sequence of random variables $\{\bu_n\}_{n=0}^\infty$ as a Markov chain,  
with $u_0 \in \R^N$ given, 
and we want to show the existence of a stationary distribution. Recall
the one-step transition kernel $\sP$ of the Markov chain given by
\eqref{eq:k1}, and its $n-$fold composition given by \eqref{eq:k2}.
In order to prove ergodicity of the Markov chain
we will follow the proof technique in \citet{msh02,meyn_tweedie}, which establishes geometric ergodicity with the following proposition.

\begin{proposition}\label{prop:ergodic} Suppose the Markov chain $\{\bu_{n}\}_{n=0}^\infty$ satisfies, for some compact set $C \in \mathcal B (\R^N)$, the following:
\begin{enumerate}
\item[(i)] For some $y^* \in \text{int}(C)$ and for any $\delta > 0$, we have
\[
\mathsf{P}(u, \mathcal B_\delta(y^*))  > 0 \qquad \text{for all } u \in C.
\]
\item[(ii)] The transition kernel $\mathsf{P}(u, \cdot)$ possesses a density $p(u, y)$ in $C$, precisely
\[
\mathsf{P}(u, A) = \int_A p(u, y)\,\mathrm{d}y, \quad \text{for all } u \in C, \; A \in \mathcal B(\R^N) \cap \mathcal B(C),
\]
and $p(u, y)$ is jointly continuous on $C \times C$. 
\item[(iii)] There is a function $V : \R^N \rightarrow [1,\infty)$, with $\lim_{u \rightarrow \infty} V(u) = \infty$, and real numbers $\alpha \in (0,1)$ and $\beta \in [0,\infty)$ such that
\[
\EE(V(\bu_{n+1}) \, | \, \bu_{n}) \leq \alpha V(\bu_n) + \beta.
\]
\end{enumerate}
If we can choose the compact set $C$ such that
\[
C = \left\{u : V(u) \leq \frac{2\beta}{\gamma - \alpha}\right\},
\]
for some $\gamma \in (\sqrt{\alpha},1)$, then there exists a unique invariant measure $\pi$. Furthermore, there is $r(\gamma) \in (0,1)$ and $\kappa(\gamma) \in (0,\infty)$ such that for all $u_0 \in \R^N$ and all measurable $g$ with $|g(u)| \leq V(u)$ for all $u \in \R^N$, we have
\[
|\mathbb{E}^{\mathsf{P}^n(u_0,\cdot)}(g) - \pi(g)| \leq \kappa r^n V(u_0).
\]
\end{proposition}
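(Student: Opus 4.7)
The plan is to recognise this as a standard geometric ergodicity result in the Meyn--Tweedie style, exactly in the pattern employed in \citet{msh02}. Condition (iii) is a Foster--Lyapunov drift condition, and the chosen set $C$ will play the role of a small set. So I would check two hypotheses of the classical $V$-uniform ergodicity theorem: (1) a geometric drift of $V$ toward $C$ from outside $C$, plus a uniform one-step bound on $V$ inside $C$; and (2) a minorization (small-set) condition on $C$. Once both are in hand, the theorem (for instance \citet[Theorem~16.0.1]{meyn_tweedie}, in the form used in \citet{msh02}) delivers the unique invariant measure $\pi$ together with the stated exponential bound $|\EE^{\sP^n(u_0,\cdot)}(g)-\pi(g)|\leq\kappa r^n V(u_0)$.

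For the drift outside $C$, note that the specific choice $C=\{u : V(u)\leq 2\beta/(\gamma-\alpha)\}$ is engineered for this step: if $u\notin C$, then $\beta < \tfrac{1}{2}(\gamma-\alpha)V(u)$, so combining with (iii) gives
\[
\EE(V(\bu_{n+1})\,|\,\bu_n=u) \leq \alpha V(u)+\beta \leq \tfrac{1}{2}(\alpha+\gamma)V(u),
\]
and since $\tfrac{1}{2}(\alpha+\gamma)<\gamma<1$ this is a strict geometric contraction. Inside $C$, (iii) gives the uniform bound $\EE(V(\bu_{n+1})\,|\,\bu_n=u)\leq \alpha\cdot 2\beta/(\gamma-\alpha)+\beta$, which is all that is needed there.

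For the minorization on $C$, the plan is to exploit (i) and (ii) jointly. Since $y^*\in\operatorname{int}(C)$, I can fix $\delta>0$ small enough that $B_\delta(y^*)\subset C$. By (ii), $p(u,y)$ is jointly continuous on the compact set $C\times \overline{B_\delta(y^*)}$, and by (i) the integral $\sP(u,B_\delta(y^*))=\int_{B_\delta(y^*)}p(u,y)\,\dee y$ is strictly positive for every $u\in C$. The goal is to upgrade this to a uniform pointwise lower bound $p(u,y)\geq\eta>0$ on $C\times B_{\delta'}(y^*)$ for some $\delta'\leq \delta$, which immediately gives the minorization
\[
\sP(u,A)\geq \eta\,\lambda\bigl(A\cap B_{\delta'}(y^*)\bigr)
\qquad\text{for all }u\in C,\ A\in \mathcal{B}(\R^N),
\]
with $\lambda$ Lebesgue measure, and hence identifies $C$ as a small set.

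The main obstacle is precisely this uniformisation step: passing from the measure-level positivity in (i) to a uniform density lower bound via joint continuity and compactness of $C$. The natural route is to argue first that $p(u,y^*)>0$ for every $u\in C$ (using that, by continuity of $p(u,\cdot)$, the nowhere-vanishing integrals over arbitrarily small balls around $y^*$ force a positive value at or near $y^*$), and then use joint continuity on $C\times\{y^*\}$ plus the compactness of $C$ to extract a uniform $\eta$ and $\delta'$. Should this one-step uniformisation fail at isolated points, a standard fallback is to work with $\sP^2$ instead, using (i) to first move into a neighbourhood where the density is bounded below and then minorising from there; the remainder of the Meyn--Tweedie argument is unchanged.
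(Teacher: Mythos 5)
The paper does not prove this proposition: it is quoted verbatim from the framework of \citet{msh02} and \citet{meyn_tweedie}, so the only meaningful comparison is with the argument in those references. Your overall strategy (Foster--Lyapunov drift plus a minorization on $C$, then invoke the $V$-uniform ergodicity theorem) is exactly the intended one, and your drift computation outside $C$ is correct: $u\notin C$ gives $\beta<\tfrac12(\gamma-\alpha)V(u)$ and hence the contraction factor $\tfrac12(\alpha+\gamma)<\gamma$.

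The genuine problem is in your primary route to the minorization. Conditions (i) and (ii) do \emph{not} imply $p(u,y^*)>0$ for $u\in C$: positivity of $\int_{B_\delta(y^*)}p(u,y)\,\dee y$ for every $\delta>0$ only yields, for each $u$, \emph{some} point $y_u$ arbitrarily close to $y^*$ with $p(u,y_u)>0$, and both the location of that point and the value of the density there may vary with $u$ (consider $p(u,y)\sim\|y-y^*\|$ near $y^*$). Consequently no uniform pointwise lower bound on $C\times B_{\delta'}(y^*)$ follows, and the one-step minorization cannot be extracted this way. What you call the ``fallback'' is in fact the argument of \citet{msh02}: apply (i) at $u=y^*$ to find a single point $y_0$ near $y^*$ with $p(y^*,y_0)>0$, use joint continuity to get $p(z,y)\ge a>0$ for $(z,y)\in B_{\epsilon_1}(y^*)\times B_{\epsilon_2}(y_0)$, note that $u\mapsto\sP(u,B_{\epsilon_1}(y^*))$ is continuous and positive on the compact set $C$ so it is bounded below, and conclude
\[
\sP^2(u,A)\;\ge\;a\,\lambda\bigl(A\cap B_{\epsilon_2}(y_0)\bigr)\,\inf_{u\in C}\sP\bigl(u,B_{\epsilon_1}(y^*)\bigr),
\]
a minorization for the \emph{two-step} kernel. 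This is not a contingency for ``isolated bad points'' but the only route available under (i)--(ii), and it is precisely why the hypothesis reads $\gamma\in(\sqrt{\alpha},1)$ rather than $\gamma\in(\alpha,1)$: the drift must be iterated twice ($\alpha^2<\gamma^2$) to match the two-step small set. Promote the fallback to the main argument and the proof is complete.
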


We may verify the assumptions of Proposition \ref{prop:ergodic}
leading to the following theorem concerning the ergodicity of
deep Gaussian processes defined via the covariance function:

\begin{theorem}
\label{t:ergodic1}
Suppose Assumptions \ref{ass:posdef_stat} and \ref{ass:posdef} hold. Then the Markov chain $\{\bu_{n}\}_{n=0}^\infty$ satisfies the assumptions of Proposition \ref{prop:ergodic}. As a consequence, there exists $\eps \in (0,1)$ such that for any $u_0 \in \R^N$, there is a $K(u_0) > 0$ with
\[
\|\sP^n(u_0,\cdot)-\pi\|_{TV} \leq K(1-\eps)^n\quad\text{for all }n \in \N,
\]
and so the chain is ergodic.
\end{theorem}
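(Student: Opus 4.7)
The plan is to verify the three hypotheses of Proposition~\ref{prop:ergodic} using the Lyapunov function $V(u)=1+\|u\|^2$, and then to invoke that proposition directly; the stated total-variation bound then follows by applying the conclusion of Proposition~\ref{prop:ergodic} with $g=\mathbf{1}_A$, which satisfies $|g|\leq 1\leq V$, and taking the supremum over $A\in\mathcal{B}(\R^N)$.

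The drift condition~(iii) is essentially automatic because the correlation matrix has constant trace. Since $\bu_{n+1}\mid\bu_n\sim N\bigl(0,\bR(\bu_n)\bigr)$,
\[
\EE\bigl(V(\bu_{n+1})\mid\bu_n\bigr)=1+\tr\bigl(\bR(\bu_n)\bigr)=1+N,
\]
where the second equality is \eqref{eq:Tr}, itself a consequence of the normalization $\rho_{\mathrm{S}}(0)=1$ in Assumption~\ref{ass:posdef_stat}(ii). Hence for any $\alpha\in(0,1)$ and $\beta\geq 1+N$ the inequality $\EE(V(\bu_{n+1})\mid\bu_n)\leq\alpha V(\bu_n)+\beta$ holds. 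Fixing for concreteness $\alpha=1/2$, $\beta=N+1$, and any $\gamma\in(\sqrt{\alpha},1)$, the set $C:=\{u:V(u)\leq 2\beta/(\gamma-\alpha)\}$ is a closed origin-centred Euclidean ball, hence a compact subset of $\R^N$ containing $0$ in its interior.

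For~(ii), the transition kernel admits the Gaussian density
\[
p(u,y)=(2\pi)^{-N/2}\det\bigl(\bR(u)\bigr)^{-1/2}\exp\!\left(-\tfrac{1}{2}y^\top\bR(u)^{-1}y\right).
\]
By Proposition~\ref{lem:posdef} and Remark~\ref{rem:new}, $\bR(u)$ is symmetric positive definite for every $u\in C$; its entries depend on $u$ only through the values $F(u_i)$ via the formula of Proposition~\ref{thm:nonstat}, so assuming $F$ and $\rho_{\mathrm{S}}$ are continuous (which holds for the canonical choices $F(t)=t^2$ and $F(t)=\exp(t)$) the map $u\mapsto\bR(u)$ is continuous on $C$. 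Compactness of $C$ then yields a uniform positive lower bound on the minimum eigenvalue of $\bR(u)$, so $\det\bR(u)$ and $\bR(u)^{-1}$ are continuous on $C$, and $p$ is jointly continuous on $C\times C$. For~(i), taking $y^*=0\in\intr(C)$, the full-support property of any non-degenerate centred Gaussian immediately yields $\sP(u,\mathcal{B}_\delta(0))>0$ for every $u\in C$ and every $\delta>0$.

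With all three hypotheses established, Proposition~\ref{prop:ergodic} delivers the claimed geometric ergodicity with $\eps=1-r(\gamma)$ and $K(u_0)=\kappa(\gamma)V(u_0)$. The main technical point I would expect to defend carefully is the joint continuity in~(ii), since Assumption~\ref{ass:posdef}(i) asks only that $F$ (equivalently $G$) be bounded and non-negative; either a continuity requirement has to be read into the hypothesis, or one proves the minorization property on $C$ directly from positive definiteness of $\bR(u)$ for each individual $u\in C$, bypassing joint continuity entirely.
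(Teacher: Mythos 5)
Your proposal is correct and follows essentially the same route as the paper: the paper verifies condition (iii) via the constant-trace identity $\tr(\bR(\bu_n))=N$ with $V(u)=\|u\|_2^2+1$ (its Lemma \ref{lem:bounded_scaled}), condition (i) via positive mass on balls around the origin (Lemma \ref{lem:ball_scaled}), condition (ii) via positive definiteness of $\bR(u)$ from Proposition \ref{lem:posdef} (Lemma \ref{lem:density_scaled}), and then extracts the total-variation bound by taking the supremum over $|g|\leq 1$ exactly as you do. The continuity caveat you raise for condition (ii) is a fair observation — the paper's Lemma \ref{lem:density_scaled} also passes over the continuity of $u\mapsto\bR(u)$, which in practice is supplied by the continuity of $\rho_{\mathrm{S}}$ (Assumption \ref{ass:posdef}(ii)) and of the canonical choices of $F$ — but this does not change the argument.
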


The proof rests on the following three lemmas, and is given
after stating and proving them. {The first lemma shows that, on average, the norm of states of the chain remains constant as the length of the chain is increased. The second shows that, given any current state in $\R^N$ and any ball around the origin in $\R^N$, there is a positive probability that the next state will belong to that ball. The third lemma shows that the probability that the Markov chain moves to a set may be found via integration of a continuous function over that set.}

\begin{lemma}\label{lem:bounded_scaled}{\em (Boundedness)} Suppose Assumptions \ref{ass:posdef_stat} and \ref{ass:posdef} hold. For all $n \in \N$, we have 
\[
\EE \big( \| \bu_{n+1} \|^2_2 \, | \, \bu_n\big) = N.
\]
\end{lemma}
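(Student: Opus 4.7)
The proof is essentially a one-line trace computation combined with the correlation normalization already noted in equation \eqref{eq:Tr}. The plan is as follows.

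First, I would unpack the conditional distribution. By construction, $\bu_{n+1}\mid\bu_n\sim N(0,\bR(\bu_n))$, so $\bu_{n+1}$ is a centred Gaussian vector whose covariance matrix is $\bR(\bu_n)$. By linearity of expectation and the definition of the Euclidean norm,
\[
\EE\bigl(\|\bu_{n+1}\|_2^2 \,\big|\, \bu_n\bigr) \;=\; \sum_{i=1}^N \EE\bigl((\bu_{n+1})_i^2 \,\big|\, \bu_n\bigr) \;=\; \sum_{i=1}^N \bR(\bu_n)_{ii} \;=\; \tr\bigl(\bR(\bu_n)\bigr).
\]

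Second, I would invoke the correlation normalization. Each diagonal entry is $\bR(\bu_n)_{ii} = \rho(x_i, x_i; u_n)$, where $\rho$ is the non-stationary correlation function constructed in Proposition \ref{thm:nonstat}. Setting $x=x'=x_i$ in the formula for $\rho$, the quadratic form $Q(x_i,x_i)$ vanishes, the ratio of determinants reduces to $1$, and by Assumptions \ref{ass:posdef_stat}(ii) we have $\rho_{\mathrm{S}}(0)=1$. Hence $\rho(x_i,x_i;u_n)=1$ for each $i$, so $\tr\bigl(\bR(\bu_n)\bigr)=N$, which is precisely the fact already recorded in \eqref{eq:Tr}.

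Combining the two displays gives $\EE(\|\bu_{n+1}\|_2^2 \mid \bu_n)=N$ as claimed. There is no real obstacle here; the only thing worth being careful about is that the construction genuinely yields a correlation (not merely covariance) kernel, which is exactly what Assumption \ref{ass:posdef_stat}(ii) guarantees and what the normalizing prefactor in Proposition \ref{thm:nonstat} preserves.
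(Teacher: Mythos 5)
Your proof is correct and follows exactly the same route as the paper: expand $\|\bu_{n+1}\|_2^2$ as a sum of squares, use the zero mean to identify each term with a diagonal entry of $\bR(\bu_n)$, and invoke the correlation normalization \eqref{eq:Tr} to get $\tr(\bR(\bu_n))=N$. Your extra verification that $\rho(x_i,x_i;u_n)=1$ is just the justification of \eqref{eq:Tr}, which the paper states separately before the lemma.
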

\begin{proof}Let $n \geq 0$. Since the random variable $\bu_{n+1} | \bu_n$ has zero mean, the linearity of expectation implies (using \eqref{eq:Tr}) that
\[
\EE \left( \| \bu_{n+1} \|^2_2 \, | \, \bu_n\right) = \EE \bigg( \sum_{j=1}^N (\bu_{n+1})_j^2 \, \bigg| \, \bu_n \bigg) = \tr(\bR(\bu_n)) = N, 
\]
for all $n \in \N$. 
\end{proof}

\begin{lemma}\label{lem:ball_scaled}{\em (Positive probability of a ball around zero)} Suppose Assumptions \ref{ass:posdef_stat} and \ref{ass:posdef} hold. For all $u \in \R^N$ and $\delta > 0$, we have
\[
\mathsf{P}\big(u, \mathcal B_\delta(0) \big) > 0.
\]
\end{lemma}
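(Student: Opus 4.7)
The plan is to exploit that the transition kernel $\sP(u,\cdot)=N\bigl(0,\bR(u)\bigr)$ is a centred Gaussian whose support necessarily contains the origin, and to turn this qualitative fact into a quantitative positive lower bound via a norm comparison against a standard Gaussian. Because $\bR(u)$ depends on $u$ and can have very small eigenvalues whenever $F\circ u$ is close to zero on the grid, a direct argument via the Lebesgue density of $N(0,\bR(u))$ would not give a $u$-uniform bound; sidestepping this by working with norms rather than densities will be the main idea.

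First I will represent the conditional random variable as $\bu_{n+1} \sim \bR(u)^{1/2}\xi$ where $\xi \sim N(0,\mathrm{I}_N)$ and $\bR(u)^{1/2}$ is the symmetric positive semi-definite square root. By Proposition \ref{lem:posdef} and Remark \ref{rem:new}, $\bR(u)$ is in fact positive definite for every $u \in \R^N$, but only its PSD-ness will be used. From this representation I obtain the pointwise bound $\|\bu_{n+1}\|_2 \leq \|\bR(u)^{1/2}\|_{op}\,\|\xi\|_2$ almost-surely.

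Next I control $\|\bR(u)^{1/2}\|_{op}$ uniformly in $u$. Since $\bR(u)$ is PSD with diagonal entries $(\bR(u))_{ii}=\rho_{\mathrm{S}}(0)=1$ by the correlation normalization in Assumptions \ref{ass:posdef_stat}(ii), its trace equals $N$ (as already exploited in Lemma \ref{lem:bounded_scaled}). The operator norm of a PSD matrix is bounded by its trace, so $\|\bR(u)\|_{op}\leq N$ and therefore $\|\bR(u)^{1/2}\|_{op}\leq\sqrt{N}$, giving the $u$-independent bound
\[
\|\bu_{n+1}\|_2 \leq \sqrt{N}\,\|\xi\|_2.
\]

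Finally, because $\xi$ is a non-degenerate standard Gaussian on $\R^N$, every open ball around $0$ has positive probability, so $\PP\bigl(\|\xi\|_2 < \delta/\sqrt{N}\bigr) > 0$ for each $\delta>0$. Combining with the above norm bound yields
\[
\sP\bigl(u,\mathcal{B}_\delta(0)\bigr) = \PP\bigl(\|\bu_{n+1}\|_2 < \delta\,\big|\,\bu_n=u\bigr) \geq \PP\bigl(\|\xi\|_2 < \delta/\sqrt{N}\bigr) > 0,
\]
which is the desired conclusion, and in fact holds with a lower bound that is uniform in $u\in\R^N$. The only nontrivial step is the trace-based operator norm estimate, but this is immediate from the normalization $\rho_{\mathrm{S}}(0)=1$, so there is no real obstacle.
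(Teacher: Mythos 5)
Your proof is correct and follows essentially the same route as the paper's: both represent $\bu_{n+1}|(\bu_n=u)$ as $\sqrt{\bR(u)}\,\xi$ with $\xi\sim N(0,\mathrm{I}_N)$, bound the operator norm of $\sqrt{\bR(u)}$ by $\sqrt{N}$ via the trace identity $\tr(\bR(u))=N$, and conclude from the positivity of standard Gaussian measure on balls. The only cosmetic difference is your use of the symmetric square root in place of the Cholesky factor, which does not change the argument.
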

\begin{proof} We have the equality $\bu_{n+1}|(\bu_n = u) = \sqrt{\bR(u)} \xi_{n+1}$ in distribution, where $\sqrt{\bR(u)}$ denotes the Cholesky factor of the correlation matrix $\bR(u)$ and $\xi_{n+1} \sim \mathrm{N}(0, \mathrm{I}_N)$. Then
\begin{align*}
\mathsf{P}\big(u, \mathcal B_\delta(0) \big) &= \PP\big(\|\bu_{n}\|_2 \leq \delta \, | \, \bu_{n-1} = u\big) \\
&= \PP\Big(\big\|\sqrt{\bR(u)} \xi_{n+1}\big\|_2 \leq \delta \Big) \\
&\geq \PP\Big(\big\|\sqrt{\bR(u)}\big\|_2 \big\| \xi_{n+1}\big\|_2 \leq \delta \Big) \\
&= \PP\Big( \big\| \xi_{n+1}\big\|_2 \leq \delta \big\|\sqrt{\bR(u)}\big\|_2^{-1} \Big).
\end{align*}
To show that the latter probability is positive, we need to show that $\delta \big\|\sqrt{\bR(u)}\big\|_2^{-1} > 0$. Since $\delta >0$ is fixed, we only need to show $\big\|\sqrt{\bR(u)}\big\|_2 < \infty$. Since $\big\|\sqrt{\bR(u)}\big\|_2^2 = \rho(\bR(u))$, the spectral radius of $\bR(u)$, we have
\[
\big\|\sqrt{\bR(u)}\big\|_2^2 = \rho(\bR(u)) \leq \tr(\bR(u)) = N.
\]
The claim then follows.
\end{proof}

\begin{lemma}\label{lem:density_scaled}{\em (Transition probability has a density)} Suppose Assumptions \ref{ass:posdef_stat} and \ref{ass:posdef} hold. Then the transition probability $\mathsf{P}\big(u, \cdot\big)$ has a jointly continuous density $p(u,y)$ for all $u \in C$, for any compact set $C \subseteq \R^N$.
\end{lemma}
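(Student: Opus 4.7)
The plan is to write down the Gaussian density explicitly and check joint continuity by verifying that the map $u \mapsto \bR(u)$ is continuous and that, on the compact set $C$, the covariance matrices are uniformly positive definite. Since $\bR(u)$ is positive definite by Proposition \ref{lem:posdef} (together with Remark \ref{rem:new}), the measure $\sP(u,\cdot) = N\bigl(0,\bR(u)\bigr)$ admits the Lebesgue density
\[
p(u,y) = \frac{1}{(2\pi)^{N/2}\det\bigl(\bR(u)\bigr)^{1/2}}\exp\!\left(-\tfrac{1}{2}\,y^T \bR(u)^{-1} y\right),
\]
so the task reduces to checking that this expression is jointly continuous in $(u,y) \in C \times \R^N$.

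First I would verify that $u \mapsto \bR(u)$ is continuous. Entrywise, $\bigl(\bR(u)\bigr)_{ij} = \rho(x_i,x_j;u)$, and the explicit formula from Proposition \ref{thm:nonstat} expresses this as a continuous function of the scalars $F(u_i)$ and $F(u_j)$ (where $u_i = u(x_i)$), together with the continuous correlation kernel $\rho_\mathrm{S}$ from Assumption \ref{ass:posdef}(ii). Thus, provided $F$ is continuous (the standard choices $F(z)=z^2$ and $F(z)=e^z$ both satisfy this), the map $u \mapsto \bR(u)$ is continuous into the space of symmetric matrices.

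Next, I would exploit compactness of $C$: since $u \mapsto \det\bigl(\bR(u)\bigr)$ is continuous and strictly positive at every point of $C$ (by positive definiteness), it attains a strictly positive minimum $\delta > 0$ on $C$. Therefore $\{\bR(u) : u \in C\}$ is a compact subset of the open cone of positive definite matrices, and on this compact subset the operations $M \mapsto M^{-1}$ and $M \mapsto \det(M)^{1/2}$ are continuous and bounded. It follows that $u \mapsto \bR(u)^{-1}$ is continuous (and uniformly bounded in operator norm) and $u \mapsto \det\bigl(\bR(u)\bigr)^{1/2}$ is continuous and bounded away from zero on $C$. Since $(u,y) \mapsto y^T \bR(u)^{-1} y$ is then jointly continuous, composition with the continuous functions $\exp(\cdot)$ and $x \mapsto 1/\sqrt{x}$ yields joint continuity of $p(u,y)$ on $C \times \R^N$.

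The main obstacle is the uniform positivity of $\det\bR(u)$ over $C$: without it the inverse need not exist continuously. I handle this purely by compactness plus pointwise positive-definiteness from Proposition \ref{lem:posdef}, rather than trying to lower-bound the eigenvalues of $\bR(u)$ explicitly in terms of $u$. Everything else is a routine verification that composing continuous matrix-valued operations with the Gaussian density formula preserves continuity.
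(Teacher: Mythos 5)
Your proof is correct and takes essentially the same route as the paper's: the paper's (one-line) argument likewise observes that $\sP(u,\cdot)=N(0,\bR(u))$ and that positive definiteness of $\bR(u)$, supplied by Proposition \ref{lem:posdef}, yields the jointly continuous Gaussian density. You simply make explicit the details the paper leaves implicit, namely the continuity of $u\mapsto\bR(u)$ (which does require continuity of $F$, a hypothesis the paper does not state explicitly but which holds for its standard choices) and the uniform positivity of $\det\bR(u)$ on the compact set $C$.
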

\begin{proof} We have $\bu_{n+1} | (\bu_{n} = u) \sim N(0, \bR(u))$, and the existence of a jointly continuous density of the transition probability in $C$ follows if $\bR(u)$ is positive definite for all $u \in C$. The claim then follows by Proposition \ref{lem:posdef}.
\end{proof}

We may now use the three preceding lemmas to prove the main ergodic theorem
for deep Gaussian processes defined through the covariance function.

\begin{proof}[Proof of Theorem \ref{t:ergodic1}] Lemma \ref{lem:ball_scaled} shows that assumption (i) is satisfied, for any $C$ containing $y^*=0$, and Lemma \ref{lem:density_scaled} shows that assumption (ii) is satisfied, for any compact set $C$. It follows from Lemma \ref{lem:bounded_scaled} that assumption (iii) is satisfied, with $V(u) = \|u\|_2^2 + 1$, any $\alpha \in (0,1)$ and $\beta = N+1$.
Now choose $\alpha = 1/4$ and $\gamma = 3/4 \in (\sqrt{\alpha},1)$, so that the set
\[
C = \left\{u : V(u) \leq \frac{2\beta}{\gamma - \alpha}\right\} = \big\{u : \|u\|_2^2 \leq 4N + 4\big\}
\]
is compact. Then there is a unique invariant measure $\pi$, and there is $r(\gamma) \in (0,1)$ and $\kappa(\gamma) \in (0,\infty)$ such that for $u_0 \in \R^N$ and all measurable $g$ with $|g(u)| \leq V(u)$ for all $u \in \R^N$, we have
\begin{align}
\label{eq:ergodic_tv}
|\mathbb{E}^{\mathsf{P}^n(u_0,\cdot)}(g) - \pi(g)| \leq \kappa r^n V(u_0).
\end{align}
Since $V(u) \geq 1$ for all $u \in \R^N$, the above holds in particular for all measurable $g$ with $\|g\|_\infty \leq 1$. Taking the supremum over all such $g$ in (\ref{eq:ergodic_tv}) yields the given total variation bound, with $K= \kappa V(u_0)$ and $\eps = 1-r$.
\end{proof}

\begin{remark}\label{rem:cov}(Covariance vs correlation kernels) In this 
subsection we have restricted our attention to correlation kernels $\rho_\mathrm{S}(\|x_i-x_j\|_2)$ and $\rho(x_i, x_j;u_n)$, rather than more general covariance kernels $c_\mathrm{S}(\|x_i-x_j\|_2) = \sigma^2_\mathrm{S}\rho_\mathrm{S}(\|x_i-x_j\|_2)$ and $c(x_i, x_j;u_n) = \sigma(x_i;u_n) \sigma(x_j;u_n) \rho(x_i, x_j;u_n)$, for stationary and non-stationary marginal standard deviation functions $\sigma_\mathrm{S} \in (0,\infty)$ and $\sigma: \R^d \rightarrow (0,\infty)$ respectively. This restriction is solely for ease of presentation; 
the analysis presented readily extends to $c(x_i, x_j;u_n)$, 
under suitable assumptions on $\sigma$. 
In  particular the analysis may be
adapted to the case of general covariance kernels 
$c_\mathrm{S}(\|x_i-x_j\|_2)$ and $c(x_i, x_j;u_n)$ under the 
assumption that there exist positive constants $\sigma^-,\sigma^+$ 
such that $\sigma^- \leq \sigma(z) \leq \sigma^+$, for all $z \in \R^d$. 
When general covariances are used then it is possible to ensure
that every multivariate Gaussian random variable in the hierarchy is 
of the same amplitude by scaling the corresponding 
covariance matrix $\bC(\bu_n)$ to have constant trace $N$ 
at each iteration $n$; the average variance over all points
$\{x_i\}_{i=1}^N$ is then $1$ for every $n$.
\end{remark}

\subsection{Covariance Operator}
\label{ssec:O4}

We consider the class of covariance operators introduced in section \ref{ssec:O3} and show that, under precise assumptions detailed below, 
the iteration (\ref{eq:cg3}) produces an ergodic Markov chain. Unlike
the previous subsection, where we worked on $\R^N$, here we will work on 
the  separable Hilbert space $\cH= L^2(D;\R)$. 
To begin with, define the precision operators (densely
defined on $\cH$ \citet{HVS,pinski2015kullback}),
\begin{align*}
C_-^{-1} &= P,\\
C_+^{-1} &= P + F_+I,\\
C(u)^{-1} &= P + \Gamma(u),\quad u \in \cH,
\end{align*}
and the probability measures
\begin{align*}
\mu_- &= N(0,C_-),\\
\mu_+ &= N(0,C_+),\\
\mu(\cdot\,;u) &= N(0,C(u)),\quad u \in \cH.
\end{align*}
Throughout the rest of this section we make the following assumptions on $C_-$ and $F$:
\begin{assumptions}
\label{ass:op}
\begin{enumerate}
\item The operator $C_-:\cH\to\cH$ is symmetric and positive, and its eigenvalues $\{\lambda_j^2\}$ have algebraic decay $\lambda_j^2 ymp j^{-r}$ for some $r > 1$.
\item The function $F:\cH\to\R$ is continuous, and there exists $F_+ \geq 0$ such that $0 \leq F(u) \leq F_+$ for all $u \in \cH$.
\end{enumerate}
\end{assumptions}

\begin{remark}
\begin{enumerate}
\item The assumption on algebraic decay of the eigenvalues can be relaxed to the operator $C_-$ being trace-class on $\cH$; however the arguments that follow are cleaner when we assume this explicit decay which, of course, implies the trace
condition.
Note also that, under the stated assumption on algebraic decay, Gaussian measures
on $L^2(D;\R)$ will be supported on $X=C(D;\R)$ under mild conditions
on the eigenfunctions of $C_-$ \citet{stuart2010inverse} 
so that $F\bigl(u(x)\bigr)$ will be defined for all $x \in D$ rather
than $x$ a.e. in $D.$ Then $\Gamma(u)v$ makes sense pointwise when $v \in X$. 
{\item The assumed form of the precision operator together with Assumptions \ref{ass:op} mean that the resulting family of measures $\{\mu(\cdot;u)\}_{u\in\cH}$ will be mutually equivalent. This allows for the total variation metric between measures to be used, and a concise proof of ergodicity to be obtained. If the measures were singular, a different metric such as the Wasserstein metric would be required to quantify the convergence.}
\end{enumerate}
\end{remark}

We now prove the following ergodic theorem for the deep Gaussian
processes constructed through covariance operators.

\begin{theorem}
\label{t:added}
Let Assumptions \ref{ass:op} hold, and let the Markov chain $\{u_n\}$ be given by (\ref{eq:cg3}) with $L(u) = C(u)^{\frac{1}{2}}$ as defined above. Then there exists a unique invariant distribution $\pi$, and there exists $\eps > 0$ such that for any $u_0 \in \cH$,
\[
\|\sP^n(u_0,\cdot)-\pi\|_{TV} \leq (1-\eps)^n\quad\text{for all }n \in \N.
\]
In particular, the chain is ergodic.
\end{theorem}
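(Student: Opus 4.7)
The plan is to establish a Doeblin-type uniform minorization condition $\sP(u,\cdot) \ge \eps\,\mu_+$ for some $\eps > 0$ independent of $u$, and then invoke the standard coupling argument to obtain geometric ergodicity in total variation with rate $(1-\eps)^n$. Under Assumptions \ref{ass:op}, the algebraic eigenvalue decay with $r>1$ makes $C_-$ trace class on $\cH$, so $\mu_-$, $\mu_+$, and each $\mu(\cdot;u)$ are bona fide centred Gaussian measures on $\cH$, with samples $v$ satisfying $\|v\|_\cH^2 < \infty$ almost surely under each of them. Since $C_+^{-1} = P + F_+ I$ is a bounded perturbation of $C_-^{-1} = P$, and $C(u)^{-1} = P + \Gamma(u)$ is likewise a bounded, non-negative perturbation (by $0 \le F \le F_+$), results on precision-perturbed Gaussian measures (cf.\ \citet{pinski2015kullback}) give mutual equivalence with explicit Radon--Nikodym derivatives
\begin{equation*}
\frac{d\mu(\cdot;u)}{d\mu_-}(v) = \frac{1}{Z(u)}\exp\Bigl(-\tfrac12\langle v,\Gamma(u)v\rangle_\cH\Bigr),\qquad \frac{d\mu_+}{d\mu_-}(v) = \frac{1}{Z_+}\exp\Bigl(-\tfrac12 F_+\|v\|_\cH^2\Bigr),
\end{equation*}
where $Z(u),Z_+ \in (0,1]$ are strictly positive: boundedness above by $1$ follows because the exponents are non-positive, and strict positivity follows because $\|v\|_\cH<\infty$ holds $\mu_-$-a.s.\ so the integrands are positive on a set of full $\mu_-$-measure.

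Next I use the operator inequality $0 \le \Gamma(u) \le F_+ I$ on $\cH$, which gives $\langle v,\Gamma(u)v\rangle \le F_+\|v\|_\cH^2$ pointwise in $v$, together with $Z(u) \le 1$, to deduce
\begin{equation*}
\frac{d\mu(\cdot;u)}{d\mu_-}(v) \;\ge\; \frac{Z_+}{Z(u)}\,\frac{d\mu_+}{d\mu_-}(v) \;\ge\; Z_+\,\frac{d\mu_+}{d\mu_-}(v) \qquad \text{for $\mu_-$-a.e.\ }v.
\end{equation*}
Integrating against $\mu_-$ over any measurable $A\subseteq \cH$ yields $\sP(u,A) = \mu(A;u) \ge Z_+\,\mu_+(A)$, i.e.\ the sought uniform minorization with $\eps := Z_+ > 0$ independent of $u$.

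Finally, the ergodicity statement follows from the classical Doeblin/coupling argument: decompose $\sP(u,\cdot) = \eps\,\mu_+ + (1-\eps)R(u,\cdot)$ for a probability kernel $R$, and couple two copies of the chain started at arbitrary $u_0, v_0$ so that at each step, with probability $\eps$, both simultaneously draw the same sample from $\mu_+$ (after which they coincide forever), and otherwise each evolves independently under $R$. The coupling time is stochastically dominated by a geometric random variable with success probability $\eps$, so $\|\sP^n(u_0,\cdot) - \sP^n(v_0,\cdot)\|_{TV} \le (1-\eps)^n$. Completeness of the space of probability measures in the TV metric then gives existence of a unique invariant $\pi$, and taking $v_0$ distributed as $\pi$ in the above bound yields $\|\sP^n(u_0,\cdot)-\pi\|_{TV} \le (1-\eps)^n$ for every $u_0 \in \cH$.

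The main obstacle is the rigorous justification of the density formula on the infinite-dimensional space $\cH$; this can be handled by a Karhunen--Lo\`eve truncation argument, or simply cited from the precision-perturbation results in \citet{pinski2015kullback}. The rest---sandwiching the densities, positivity of the normalizing constants, and the Doeblin coupling---are then essentially routine once the trace-class structure of $C_-$ and the uniform bound on $\Gamma(u)$ are in hand.
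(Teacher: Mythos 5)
Your proof is correct, and the heart of it---the uniform minorization $\sP(u,\cdot)\ge \eps\,\mu_+$ obtained by sandwiching the Radon--Nikodym derivatives $\dee\mu(\cdot;u)/\dee\mu_-$ and $\dee\mu_+/\dee\mu_-$ using $0\le\Gamma(u)\le F_+I$ and $Z(u)\le 1$---is essentially identical to the paper's Lemma \ref{lem:minor_op} (your constant $\eps=Z_+$ is in fact slightly sharper than the paper's, which lower-bounds $Z_+$ by restricting the integral to the unit ball). Where you genuinely diverge is in how existence of the invariant measure is obtained. The paper spends most of its proof establishing that the chain is strong Feller and that $\{\sP^n(u_0,\cdot)\}$ is tight, via a second-moment bound in a compactly embedded Sobolev-type space $\cH^s$ (this is where the algebraic eigenvalue decay $r>1$ is really used), and then invokes Theorem 12.0.1 of \citet{meyn_tweedie} for existence before applying the coupling theorem 16.2.4. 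You instead observe that a \emph{global} Doeblin condition makes $\mu\mapsto\mu\sP$ a strict contraction on the space of probability measures under the total variation metric, so existence and uniqueness of $\pi$ follow simultaneously from completeness of that space, with no separate tightness or Feller argument needed. Your route is more self-contained and cleaner for this particular theorem; the paper's route is more robust in that the Feller/tightness machinery would survive a weakening of the minorization from global to local (e.g.\ if $F$ were unbounded), whereas your contraction argument relies essentially on the minorization holding uniformly over all of $\cH$. The one point you rightly flag as needing care---rigorous justification of the density formulae on the infinite-dimensional space---is handled the same way in the paper, by appeal to the precision-perturbation results of \citet{pinski2015kullback}.
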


The following lemma will be used to show a minorization condition, as well as establish further notation, key to the proof of Theorem \ref{t:added}
which follows it. {It essentially shows a stronger form of equivalence of the family of measures $\{\mu(\cdot,u)\}_{u \in \cH}$.}

\begin{lemma}
\label{lem:minor_op}
Let Assumptions \ref{ass:op} hold. Then there exists $\eps > 0$ such that for any $u,v \in \cH$,
\[
\frac{\dee \mu(\cdot\,;u)}{\dee \mu_+}(v) \geq \eps.
\]
\end{lemma}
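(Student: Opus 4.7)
The plan is to write down the Radon--Nikodym derivative $\dee\mu(\cdot\,;u)/\dee\mu_+$ explicitly via the standard Feldman--Hajek formula, and then bound its two factors uniformly in $u$. The precisions satisfy $C(u)^{-1}=P+\Gamma(u)$ and $C_+^{-1}=P+F_+I$; setting $B(u):=F_+I-\Gamma(u)$ gives $C(u)^{-1}=C_+^{-1}-B(u)$, and Assumption \ref{ass:op}(ii) yields $0\le B(u)\le F_+I$ as operators. Introduce
\[
T(u):=C_+^{1/2}B(u)C_+^{1/2},
\]
so $0\le T(u)\le F_+C_+$. Assumption \ref{ass:op}(i) (algebraic decay with $r>1$) makes $C_-$ trace class, and since $C_+^{-1}\ge P=C_-^{-1}$ forces $C_+\le C_-$, the operator $F_+C_+$ is trace class, hence so is $T(u)$. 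A quick bound gives $\|T(u)\|\le F_+\|C_+\|=F_+/(\lambda_1^{-2}+F_+)<1$, so $I-T(u)>0$ and $\mu(\cdot\,;u)\sim\mu_+$ by Feldman--Hajek.

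The standard Radon--Nikodym formula for equivalent centred Gaussians then produces
\[
\frac{\dee\mu(\cdot\,;u)}{\dee\mu_+}(v)=\det\bigl(I-T(u)\bigr)^{1/2}\exp\!\Bigl(\tfrac12\langle v,B(u)v\rangle\Bigr),
\]
with $\det(\cdot)$ the Fredholm determinant. Because $B(u)\ge 0$, the exponential factor is $\ge 1$ pointwise in $v$, so it suffices to bound $\det(I-T(u))$ away from zero uniformly in $u$. Ordering eigenvalues in decreasing order and using the min--max characterisation, the operator inequality $T(u)\le F_+C_+$ forces $\mu_j(T(u))\le\mu_j(F_+C_+)=:\nu_j$ for every $j$; since each $\nu_j<1$ and $\sum_j\nu_j=F_+\tr(C_+)<\infty$, the Fredholm product converges and
\[
\det\bigl(I-T(u)\bigr)=\prod_j\bigl(1-\mu_j(T(u))\bigr)\ge\prod_j(1-\nu_j)=\det(I-F_+C_+)>0.
\]
Setting $\eps:=\det(I-F_+C_+)^{1/2}$ yields the stated lower bound, uniformly in both $u$ and $v$.

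The step I expect to require the most care is rigorously invoking the Feldman--Hajek formula and the Fredholm determinant manipulations in the infinite-dimensional setting; the pointwise quadratic form $\langle v,B(u)v\rangle$ is, however, unproblematic because $B(u)$ is a bounded multiplication operator on $\cH=L^2(D;\R)$, so it is defined for every $v\in\cH$ without appealing to Cameron--Martin directions. The remaining ingredients---convergence of the Fredholm product, the cyclic identity $\det(I-C_+B(u))=\det(I-C_+^{1/2}B(u)C_+^{1/2})$, and eigenvalue monotonicity for positive trace-class operators---are standard once the trace-class control on $T(u)$ is established from Assumptions \ref{ass:op}.
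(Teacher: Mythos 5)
Your proof is correct, and it ends up with the same two-factor structure as the paper's argument --- a $v$-independent normalization constant multiplied by $\exp\bigl(\tfrac12\langle v,(F_+I-\Gamma(u))v\rangle\bigr)\ge 1$ --- but it handles the normalization in a genuinely different way. The paper never computes that constant: it factors both measures through the reference measure $\mu_-$, writing $\frac{\dee\mu(\cdot;u)}{\dee\mu_-}(v)=Z(u)^{-1}\exp(-\tfrac12\langle v,\Gamma(u)v\rangle)$ and $\frac{\dee\mu_+}{\dee\mu_-}(v)=Z_+^{-1}\exp(-\tfrac12\langle v,F_+v\rangle)$ with $Z(u),Z_+$ defined as Gaussian expectations, and then simply bounds $Z(u)\le 1$ (since $F\ge0$) and $Z_+\ge e^{-F_+/2}\mu_-(\|v\|^2\le1)>0$, the latter using only that balls have positive measure in a separable Hilbert space; this yields $\eps=e^{-F_+/2}\mu_-(\|v\|^2\le 1)$. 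You instead relate $\mu(\cdot\,;u)$ directly to $\mu_+$, identify the normalization as the Fredholm determinant $\det(I-T(u))^{1/2}$, and bound it below by $\det(I-F_+C_+)^{1/2}$ via eigenvalue monotonicity, after checking that $T(u)$ is trace class and $\|T(u)\|<1$. Your route buys a fully explicit, cleaner constant $\eps=\det(I-F_+C_+)^{1/2}$ and a quantitative handle on the equivalence of the family $\{\mu(\cdot\,;u)\}$, at the cost of heavier machinery (Fredholm determinants, the determinant form of the density formula, operator-monotonicity of inversion to get $C_+\le C_-$); the paper's route is more elementary precisely because the probabilistic normalization $Z(u)=\bbE^{\mu_-}[\exp(-\tfrac12\langle v,\Gamma(u)v\rangle)]$ is trivially bounded above by $1$, so no spectral information is needed. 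If you write your version up, make explicit that the determinant is the correct normalization because $\bbE^{\mu_+}[\exp(\tfrac12\langle v,B(u)v\rangle)]=\det(I-T(u))^{-1/2}$, a Gaussian integral whose convergence rests on the bound $\|T(u)\|\le F_+/(\lambda_1^{-2}+F_+)<1$ that you note --- that inequality is doing real work rather than being a side remark.
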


\begin{proof}

The assumptions on $F$ mean that the measures $\mu(\cdot\,;u)$, $\mu_-$ and $\mu_+$ are mutually absolutely continuous, with
\begin{align*}
\frac{\dee \mu(\cdot\,;u)}{\dee \mu_-}(v) &= \frac{1}{Z(u)}\exp\bigg(-\frac{1}{2}\big\langle v,F(u)v\big\rangle\bigg),\\
Z(u) &= \mathbb{E}^{\mu_-}\bigg[\exp\bigg(-\frac{1}{2}\big\langle v,F(u)v\big\rangle\bigg)\bigg];\\
\frac{\dee \mu_+}{\dee \mu_-}(v) &= \frac{1}{Z_+}\exp\bigg(-\frac{1}{2}\big\langle v,F_+ v\big\rangle\bigg),\\
Z_+ &= \mathbb{E}^{\mu_-}\bigg[\exp\bigg(-\frac{1}{2}\big\langle v,F_+ v\big\rangle\bigg)\bigg].
\end{align*}
Observe that we may bound $Z(u) \leq 1$ uniformly in $u\in H$ since $F \geq 0$. Additionally, we have that
\[
Z_+ \geq \mathbb{E}^{\mu_-}\bigg[\exp\bigg(-\frac{1}{2}\big\langle v,F_+ v\big\rangle\bigg)\mathds{1}_{\|v\|^2 \leq 1}\bigg] \geq \exp\bigg(-\frac{1}{2}F_+\bigg)\mu_-\big(\|v\|^2 \leq 1\big) =: \eps > 0.
\]
Note that $\eps$ is positive since $\cH$ is separable, and thus all balls 
have positive measure \citet{hairerSPDE}. It follows that
\begin{align*}
\frac{\dee \mu(\cdot\,;u)}{\dee \mu_+}(v) &= \frac{\dee \mu(\cdot\,;u)}{\dee \mu_-}(v)\times \bigg(\frac{\dee \mu_+}{\dee \mu_-}(v)\bigg)^{-1}\\
&= \frac{1}{Z(u)}\exp\bigg(-\frac{1}{2}\big\langle v,F(u)v\big\rangle\bigg) \times Z_+ \exp\bigg(\frac{1}{2}\big\langle v,F_+ v\big\rangle\bigg)\\
&\geq \eps\exp\bigg(\frac{1}{2}\big\langle v,\big(F_+-F(u)\big)v\big\rangle\bigg)\\
&\geq \eps
\end{align*}
since $F_+$ bounds $F$ above uniformly. 
\end{proof}

\begin{proof}[Proof of Theorem \ref{t:added}]
We first establish existence of at least one invariant distribution by showing that chain $\{u_n\}$ is (strong) Feller, and that for each $u_0 \in \cH$ the family $\{\sP^n(u_0,\cdot)\}$ of transition kernels is tight.  To see the former, let $f:\cH\to\R$ be any bounded measurable function. We have that, for any $v \in \cH$,
\begin{align*}
(\sP f)(u) &:= \int_{\cH} f(v)\sP(u,\dee v)\\
&= \int_{\cH}f(v)\frac{1}{Z(u)}\exp\bigg(-\frac{1}{2}\langle v,F(u)v\rangle\bigg)\,\mu_-(\dee v).
\end{align*}
Since $F(u) \leq F_+$ it follows that $Z(u)$ is bounded below by a positive constant, 
uniformly with respect to $u$. Additionally $F$ is continuous and non-negative, and so the integrand is bounded and continuous with respect to $u$. Hence given any sequence $u^{(k)}\to u$ in $\cH$, we may apply the dominated convergence theorem to see that $(\sP f)(u^{(k)})\to(\sP f)(u)$. The function $\sP f$ is therefore continuous, and so the chain $\{u_n\}$ is strong Feller.

We now show tightness. The assumptions on the operator $C_-:\mathcal{H}\to\mathcal{H}$ imply that it is trace-class, and so in particular compact. It is also positive and symmetric, and so by the spectral theorem, admits a complete orthonormal system of eigenvectors $\{\varphi_j\}$ with corresponding positive eigenvalues $\{\lambda_j^2\}$ such that $\lambda_j^2 \to 0$. Given $s > 0$, define the subspace $\cH^s\subset\cH$ by
\[
\cH^s = \bigg\{v \in \mathcal{H}\,\bigg|\,\|v\|_{\cH^s}^2 := \sum_{j=1}^\infty j^{2s}|\langle\varphi_j,v\rangle|^2 < \infty\bigg\}.
\]
It is standard to show that $\cH^s$ is compactly embedded in $\cH$ for any $s > 0$, see for example Appendix A.2 in \citet{robinson2001infinite}. By the Karhunen-Lo\'eve theorem, any $v \sim \mu_-$ may be represented as
\[
v = \sum_{j=1}^\infty \lambda_j\xi_j\varphi_j,\quad\xi_j\sim N(0,1)\text{ i.i.d.}
\]
Hence, by the orthonormality of the $\{\varphi_j\}$ and the assumed decay of the eigenvalues, we have that
\[
\mathbb{E}^{\mu_-}\big(\|v\|_{\mathcal{H}^s}^2\big) = \sum_{j=1}^\infty j^{2s}\lambda_j^2 ymp \sum_{j=1}^\infty j^{2s-r}
\]
and so
\[
\mathbb{E}^{\mu_-}\big(\|v\|_{\mathcal{H}^s}^2\big) < \infty\quad\text{if and only if}\quad s < \frac{r}{2} - \frac{1}{2}.
\]
Since $r > 1$ by assumption, we can always choose $s > 0$ such that this holds; fix such an $s$ in what follows. Observe that, for any $n \in \N$,
\begin{align*}
\mathbb{E}\big(\|u_n\|_{\cH^s}^2) &= \mathbb{E}\big(\mathbb{E}\big(\|u_n\|_{\cH^s}^2\big| u_{n-1}\big)\big)\\
&= \mathbb{E}\big(\mathbb{E}^{\mu(\cdot\,;u_{n-1})}\big(\|v\|_{\cH^s}^2\big)\big)\\
&= \mathbb{E}\bigg(\int_{\cH} \|v\|_{\cH^s}^2\frac{1}{Z(u_{n-1})}\exp\bigg(-\frac{1}{2}\big\langle v,F(u_{n-1})v\big\rangle\bigg)\,\mu_-(\dee v)\bigg)\\
&\leq \frac{1}{Z_+}\mathbb{E}^{\mu_-}\big(\|v\|_{\cH^s}^2\big)\\
&=: M < \infty.
\end{align*}
We have bounded $Z(u_{n-1}) \geq Z_+$ using that $F(u_{n-1}) \leq F_+$. Applying the Chebychev inequality, we have for each $n \in N$ and $R > 0$
\begin{align*}
\mathbb{P}\big(\|u_n\|_{\cH^s} > R\big) \leq \frac{\mathbb{E}\big(\|u_n\|_{\cH^s}^2\big)}{R^2} \leq \frac{M}{R^2},
\end{align*}
and so given any $\kappa > 0$,
\[
\mathbb{P}\bigg(\|u_n\|_{\cH^s} \leq \sqrt{\frac{M}{\kappa}}\bigg) \geq 1-\kappa.
\]
This can be rewritten as
\[
\mathsf{P}^n(u_0,K_\kappa) \geq 1-\kappa
\]
where $K_\kappa = \big\{u \in \cH\,|\,\|u\|_{\cH^s} \leq \sqrt{M/\kappa}\big\}$ is compact in $\cH$, since $\cH^s$ is compactly embedded in $\cH$; this shows tightness of the sequence of probability measures $\mathsf{P}^n(u_0,\cdot)$. Since tightness implies boundedness in probability on average, an application of Theorem 12.0.1 in \citet{meyn_tweedie} gives existence of an invariant distribution.

Lemma \ref{lem:minor_op} shows that $\{u_n\}$ satisfies a global minorization condition for the one-step transition probabilities: for any $u_0 \in \cH$ and any measurable $A\subseteq\cH$,
\[
\mathsf{P}\big(u_0,A\big) = \mathbb{E}^{\mu(\cdot\,;u_0)}\big(\mathds{1}_A(v)\big) = \mathbb{E}^{\mu_+}\bigg(\frac{\dee \mu(\cdot\,;u_0)}{\dee \mu_+}(v)\mathds{1}_A(v)\bigg) \geq \eps \mu_+(A).
\]
Combined with the existence of an invariant distribution above, a short coupling argument (Theorem 16.2.4 in \citet{meyn_tweedie}) gives the result with the same $\eps$ as above.
\end{proof}

\subsection{Convolution}
\label{ssec:conv_proof}

The convolution iteration has the advantage that, through use of Fourier series and 
the law of large numbers, its long time 
behaviour can be completely characterized analytically. 
We consider the convolution as a random map on $\cH = L^2(D;\mathbb{C})$, 
$D = (0,1)^d$. The iteration is given by
\begin{align}
\label{eq:convolution}
u_{n+1}(x) = (u_n * \xi_{n+1})(x) := \int_{D} u_n(x-y)\xi_{n+1}(y)\,dy,\quad \xi_{n+1}\sim N(0,C)\text{ i.i.d.}
\end{align}
where we implicitly work with periodic extensions to define the convolution. We assume that $C$ is a negative fractional power of a differential operator so that it diagonalizes in Fourier space; such a form of covariance operator is common in applications, d{as it includes, for example, Whittle-Mat\'ern distributions \citet{matern_spde}}. For example, we may take
\[
C = (I - \Delta)^{-\alpha},\quad D(-\Delta) = H^2_{\mathrm{per}}([0,1]^d)\subset \cH,
\]
in which case the samples $\xi_{n+1} \sim N(0,C)$ will (almost surely) possess $s$ fractional Sobolev and H\"older derivatives for any $s < \alpha - d/2$; see
\citet{stuart2010inverse} for details.

We choose the orthonormal Fourier basis
\[
\varphi_k(x) = e^{2\pi i k\cdot x},\quad k \in \mathbb{Z}^d
\]
which are the eigenvectors of $C$; we denote the corresponding eigenvalues $\{\lambda_k^2\}$. Given $u \in \cH$ and $k \in \mathbb{Z}^d$, define the Fourier coefficient $\hat{u}(k)\in\mathbb{C}$ by
\[
\hat{u}(k) := \langle\varphi_k,u\rangle_{L^2} = \int_D \overline{\varphi_k(x)}u(x)\,\dee x.
\]
Then it can be readily checked that for any $u,v \in \cH$ and $k \in \mathbb{Z}^d$,
\begin{align}
\label{eq:fconv}
\widehat{(u*v)}(k) = \hat{u}(k)\hat{v}(k).
\end{align}
We use this property to establish the following theorem.

\begin{theorem}
\label{T:convolution}
Let $C:\cH\to\cH$ be a negative fractional power of a differential operator such that $C$ is positive, symmetric and trace-class, with eigenvectors $\{\psi_k\}$ and eigenvalues $\{\lambda_k^2\}$. Define the Markov chain $\{u_n\}$ by (\ref{eq:convolution}). Then for any $u_0 \in \cH$,
\[
\lim_{n\rightarrow\infty} |\hat{u}_n(k)|^2 =
\begin{cases}
0 & |\lambda_k|^2 < 2e^\gamma\\
\infty & |\lambda_k|^2 > 2e^\gamma
\end{cases}\quad\text{almost surely}
\]
where $\gamma\approx 0.577$ is the Euler-Mascheroni constant. In particular, if $|\lambda_k|^2 < 2e^\gamma$ for all $k \in \mathbb{Z}^d$, then 
every Fourier coefficient of $u_n$ tends to zero almost surely
and hence $u_n\rightharpoonup 0$ in $\cH$ almost surely.
\end{theorem}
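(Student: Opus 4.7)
The plan is to exploit the diagonalization of convolution under the Fourier transform, reducing the infinite-dimensional iteration to countably many scalar multiplicative random walks, each governed by the strong law of large numbers applied on a logarithmic scale.

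Applying \eqref{eq:fconv} inductively to the recursion $u_{n+1} = u_n * \xi_{n+1}$ yields, for each $k \in \mathbb{Z}^d$,
$$\hat{u}_n(k) = \hat{u}_0(k) \prod_{j=1}^n \hat{\xi}_j(k).$$
Taking the logarithm of the squared modulus turns this product into a random walk,
$$\log|\hat{u}_n(k)|^2 = \log|\hat{u}_0(k)|^2 + \sum_{j=1}^n \log|\hat{\xi}_j(k)|^2,$$
and since $\{\xi_j\}$ is i.i.d., for each fixed $k$ the summands $\{\log|\hat\xi_j(k)|^2\}_{j \ge 1}$ form an i.i.d.\ sequence.

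The key calculation is the drift $\mu_k := \EE[\log|\hat\xi_1(k)|^2]$. Because $\xi_1 \sim N(0,C)$ and $C$ diagonalizes in $\{\varphi_k\}$ with eigenvalues $\{\lambda_k^2\}$, the squared modulus $|\hat\xi_1(k)|^2$ is a scaled chi-squared variable. Using the identity $\EE[\log Z] = \log 2 + \psi(\nu/2)$ for $Z \sim \chi^2_\nu$, together with the value $\psi(1/2) = -\gamma - 2\log 2$, one obtains
$$\mu_k = \log|\lambda_k|^2 - \log 2 - \gamma = \log\bigl(|\lambda_k|^2 / 2e^\gamma\bigr).$$
Kolmogorov's strong law of large numbers then gives $\tfrac{1}{n}\log|\hat u_n(k)|^2 \to \mu_k$ almost surely, so $|\hat u_n(k)|^2$ tends to $0$ or $\infty$ almost surely according to the sign of $\mu_k$; this is precisely the dichotomy in the theorem.

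For the final weak-convergence claim when $|\lambda_k|^2 < 2e^\gamma$ for every $k$, coefficient-wise almost-sure convergence is not by itself sufficient, since weak convergence in a Hilbert space also requires boundedness in norm along a total set. I expect this step to be the main technical obstacle. The approach is to exploit that $C$ being trace-class forces $\lambda_k^2 \to 0$, and hence $\mu_k \to -\infty$, so that one has uniform-in-$k$ exponential decay rates. Combined with a Borel--Cantelli argument applied to the countably many modes (and a union of null sets to ensure convergence of every coefficient on a single full-measure event), this should allow one to control $\sum_k |\hat u_n(k)|^2$ almost surely for large $n$, upgrading coefficient-wise convergence to weak convergence in $\cH$.
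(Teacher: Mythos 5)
Your argument for the dichotomy is essentially identical to the paper's: diagonalize the convolution in the Fourier basis via \eqref{eq:fconv}, reduce to independent scalar multiplicative walks $\hat{u}_{n+1}(k)=\hat{u}_n(k)\lambda_k\eta_{n,k}$, pass to logarithms, and apply the strong law of large numbers with the drift $\EE\log|\eta_{1,k}|^2=-\gamma-\log 2$ (the paper states this value directly; your route through $\EE[\log Z]=\log 2+\psi(\nu/2)$ and $\psi(1/2)=-\gamma-2\log 2$ gives the same number), yielding the threshold $|\lambda_k|^2=2e^\gamma$. The one place you go beyond the paper is the final weak-convergence claim: the paper simply asserts that coefficient-wise almost-sure convergence implies $u_n\rightharpoonup 0$, whereas you correctly observe that one also needs almost-sure boundedness of $\|u_n\|_{\cH}$ along a single full-measure event. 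Your proposed repair is sound and can be made quite short: since $C$ is trace-class, $\lambda_k^2\to 0$, so only finitely many modes have $|\lambda_k|^2\geq c$ for a suitable $c<1$; each of these finitely many coefficients tends to zero almost surely by the scalar result, while for the remaining modes $\EE\bigl[\sum_{|\lambda_k|^2<c}|\hat{u}_n(k)|^2\bigr]\leq c^{\,n}\|u_0\|^2$, which is summable in $n$, so Borel--Cantelli (or monotone convergence) kills the tail almost surely. The countable union of null sets over $k$ is null, so everything holds on one event of full measure. In short: your proposal is correct, matches the paper's proof of the main dichotomy, and supplies a justification for the concluding implication that the paper leaves implicit.
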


\begin{proof}
First observe that by the Karhunen-Lo\'eve theorem, we may express $\xi_{n+1} \sim N(0,C)$ as
\[
\xi_{n+1} = \sum_{k\in\mathbb{Z}^d} \lambda_k\eta_{n,k}\varphi_k,\quad \eta_{n,k}\sim N(0,1)\text{ i.i.d.}
\]
and so, since $\{\varphi_k\}$ is orthonormal,
\[
\hat{\xi}_{n+1}(k) = \lambda_k\eta_{n,k}.
\]
Then by the property (\ref{eq:fconv}), we see that for each $k \in \mathbb{Z}^d$ and $n \in \N$,
\begin{equation}
\label{eq:conv_iter2}
\hat{u}_{n+1}(k) = \hat{u}_n(k)\hat{\xi}_{n+1}(k) = \hat{u}_n(k)\lambda_k\eta_{n,k}
\end{equation}
where the second equality is in distribution. The problem has now been reduced to an independent family of scalar problems. 
We can write $\hat{u}_n(k)$ explicitly as
\begin{align}
\label{eq:conv_iter}
\hat{u}_n(k) = \hat{u}_0(k) \prod_{j=1}^n \lambda_k\eta_{j,k}.
\end{align}
Now observe that
\begin{align}
\label{eq:slln}
\notag|\hat{u}_n(k)|^2 &= |\hat{u}_0(k)|^2 \prod_{j=1}^n |\lambda_k|^2|\eta_{j,k}|^2\\
\notag&= |\hat{u}_0(k)|^2\exp\left(n\cdot\frac{1}{n}\sum_{j=1}^n \log\big(|\lambda_k|^2|\eta_{j,k}|^2\big)\right)\\
&= |\hat{u}_0(k)|^2\exp\left(n\cdot\left(\frac{1}{n}\sum_{j=1}^n \log|\eta_{j,k}|^2 + \log|\lambda_k|^2\right)\right).
\end{align}
By the strong law of large numbers, the scaled sum inside the exponential converges almost surely to $\mathbb{E}(\log|\eta_{1,k}|^2)$. This can be calculated as 
\[
\mathbb{E}(\log|\eta_{1,k}|^2) = -\gamma - \log2.
\]
If the bracketed term inside the exponential in (\ref{eq:slln}) is eventually negative almost surely, then the limit of $|u_n(k)|^2$ will be zero almost surely. This is guaranteed when $-\gamma - \log 2 + \log|\lambda_k|^2 < 0$, i.e. $|\lambda_k|^2 < 2e^\gamma$. Similarly we get divergence if the bracketed term is eventually positive, which happens when $|\lambda_k|^2 > 2e^\gamma$.
\end{proof}

\begin{remark}
It is interesting to note that we may take expectations in 
(\ref{eq:conv_iter2}) to establish that
\[
\mathbb{E}|\hat{u}_n(k)|^2 = |\hat{u}_0(k)|^2 |\lambda_k|^{2n}
\]
and so
\[
\lim_{n\rightarrow\infty} \mathbb{E}|\hat{u}_n(k)|^2 =
\begin{cases}
0 & |\lambda_k|^2 < 1\\
\infty & |\lambda_k|^2 > 1
\end{cases}.
\]
In particular, if $|\lambda_k|^2 \in (1,2e^\gamma)$, then $|\hat{u}_n(k)|^2$ converges to zero almost surely, but diverges in mean square.
\end{remark}

Via a slight modification of the above proof to account for different boundary conditions, we have the following result.
\begin{corollary}
Let $D = (0,1)$ and let $\{u_n\}$ be defined by the iteration (\ref{eq:convolution}), where each $\xi_{n+1}$ is a Brownian bridge. Then $u_n\rightharpoonup 0$ almost surely.
\end{corollary}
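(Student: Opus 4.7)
The plan is to mirror the proof of Theorem \ref{T:convolution}, adapting it to handle the fact that the Brownian bridge $\xi$ does not diagonalise in the periodic Fourier basis $\{e^{2\pi i k x}\}_{k\in\mathbb{Z}}$. Since $\xi(0)=\xi(1)=0$ almost surely, the period-one extension of $\xi$ is continuous on $\R$, in particular lies in $L^2(D;\bbC)$, and so its Fourier coefficients $\hat\xi(k)$ are well-defined centred complex Gaussians. The key observation is that although $\{\hat\xi(k)\}_{k\in\mathbb{Z}}$ is correlated across $k$, the multiplicative identity (\ref{eq:fconv}) still yields
\[
\hat u_n(k)=\hat u_0(k)\prod_{j=1}^n \hat \xi_j(k),
\]
and for every fixed $k$ the factors $\hat\xi_j(k)$ form an i.i.d.\ sequence across $j$, which is exactly what the SLLN step of Theorem \ref{T:convolution} requires.

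I would next obtain a uniform-in-$k$ bound on $\sigma_k^2 := \EE|\hat\xi(k)|^2$. Parseval's identity gives
\[
\sum_{k\in\mathbb{Z}}\sigma_k^2 \;=\; \EE\|\xi\|_{L^2}^2 \;=\; \int_0^1 t(1-t)\,\dee t \;=\; \tfrac{1}{6},
\]
so $\sigma_k^2\le 1/6<1$ for every $k\in\mathbb{Z}$. If $\sigma_k^2=0$ for some $k$ then $\hat u_n(k)\equiv 0$ for $n\ge 1$ trivially; otherwise $\hat\xi(k)$ is a non-degenerate centred Gaussian, so $\log|\hat\xi(k)|^2$ is integrable, and Jensen's inequality supplies
\[
\EE\log|\hat\xi(k)|^2 \;\le\; \log\sigma_k^2 \;\le\; \log(1/6) \;<\; 0.
\]
Applying the strong law of large numbers to $\sum_{j=1}^n\log|\hat\xi_j(k)|^2$ exactly as in display (\ref{eq:slln}) then gives $|\hat u_n(k)|\to 0$ almost surely for each $k$, and a countable intersection over $k\in\mathbb{Z}$ yields simultaneous almost-sure convergence of every Fourier coefficient.

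To upgrade pointwise Fourier convergence to the claimed weak convergence $u_n\rightharpoonup 0$ in $\cH$, I would prove the stronger statement $\|u_n\|_{L^2}\to 0$ a.s. Using that the $\xi_j$ are i.i.d.\ across layers,
\[
\EE\|u_n\|_{L^2}^2 \;=\; \sum_{k\in\mathbb{Z}}|\hat u_0(k)|^2\sigma_k^{2n} \;\le\; \Big(\tfrac{1}{6}\Big)^n\|u_0\|_{L^2}^2,
\]
which is summable in $n$, so the Chebychev inequality together with the first Borel--Cantelli lemma forces $\|u_n\|_{L^2}\to 0$ almost surely, and in particular $u_n\rightharpoonup 0$ almost surely.

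The main subtlety, and the only real departure from the proof of Theorem \ref{T:convolution}, is that the Fourier coefficients of a single realisation of the Brownian bridge are correlated across $k$. This would be fatal if we tried to apply a law of large numbers across frequencies, but the SLLN we actually need acts along the layer index $j$ at each frequency $k$ separately, where independence is preserved. The loss of a precise eigenvalue identity of the form $\hat\xi(k)=\lambda_k\eta_{n,k}$ is absorbed by replacing the sharp threshold $|\lambda_k|^2<2e^\gamma$ with the weaker but sufficient Jensen condition $\sigma_k^2<1$, which Parseval makes easy to verify.
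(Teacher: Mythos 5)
Your proof is correct, but it takes a genuinely different route from the paper's. The paper identifies the Karhunen--Lo\'eve eigenfunctions $\sqrt{2}\sin(j\pi x)$ of the Brownian bridge covariance $(-\Delta)^{-1}$, expresses them in the periodic exponential basis, and deduces that each Fourier coefficient still satisfies the multiplicative recursion $\hat{u}_{n+1}(k)=\hat{u}_n(k)\lambda_k\eta_{n,k}$ with $|\lambda_k|^2=(2\pi^2k^2)^{-1}<2e^\gamma$, so that the law-of-large-numbers threshold of Theorem \ref{T:convolution} applies directly. You instead avoid any explicit spectral computation: Parseval plus $\EE\|\xi\|_{L^2}^2=\int_0^1 t(1-t)\,\dee t=\tfrac16$ gives the uniform bound $\sigma_k^2\le \tfrac16$, and Jensen converts this into the negative log-moment needed for the SLLN step, replacing the sharp criterion $|\lambda_k|^2<2e^\gamma$ by the cruder but sufficient $\sigma_k^2<1$. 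This is more robust --- it requires nothing about how the bridge's coefficients correlate across frequencies or which KL modes contribute to which $\hat\xi(k)$ --- and your closing second-moment/Borel--Cantelli argument actually yields the strictly stronger conclusion $\|u_n\|_{L^2}\to 0$ almost surely (which subsumes the SLLN step entirely and makes it redundant), whereas the paper only obtains coefficientwise decay and hence weak convergence. What you give up is the quantitative almost-sure decay rate per frequency that the explicit eigenvalues provide; what you gain is a shorter, strong-convergence proof that would survive replacing the Brownian bridge by any centred Gaussian field with $\EE\|\xi\|_{L^2}^2<1$.
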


\begin{proof}
The  Brownian bridge on $[0,1]$ has covariance operator $(-\Delta)^{-1}$, where
\[
D(-\Delta) = \{u \in H^2_{\mathrm{per}}([0,1])\,|\,u(0) = u(1) = 0\}.
\]
The result of Theorem \ref{T:convolution} cannot be applied directly, since the basis functions $\{\varphi_k\}$ do not satisfy the boundary conditions. The eigenfunctions with the correct boundary conditions are given by
\[
\psi_j(x) = \sqrt{2}\sin(j\pi x) = \frac{1}{\sqrt{2}i}\big(\varphi_j(x) - \varphi_{-j}(x)\big),\quad j \geq 1
\]
with corresponding eigenvalues $\alpha_j^2 = (\pi^2j^2)^{-1}$. A Brownian bridge $\xi_{n+1} \sim N(0,(-\Delta)^{-1})$ can then be expressed as
\[
\xi_{n+1} = \sum_{j=1}^\infty \alpha_j\zeta_{n,j}\psi_j,\quad \zeta_{n,j}\sim N(0,1)\text{ i.i.d.}
\]
by the Karhunen-Lo\'eve theorem. We calculate
\begin{align*}
\hat{u}_{n+1}(k) &= \hat{u}_n(k)\hat{\xi}_n(k)\\
&= \hat{u}_n(k)\sum_{j=1}^\infty \alpha_j\zeta_{n,j}\langle \varphi_k,\psi_j\rangle\\
&= \hat{u}_n(k)\sum_{j=1}^\infty \alpha_j\zeta_{n,j}\frac{1}{\sqrt{2}i}\big(\langle \varphi_k,\varphi_j\rangle - \langle \varphi_k,\varphi_{-j}\rangle\big)\\
&= \hat{u}_n(k)\frac{\mathrm{sgn}(k)\alpha_{|k|}}{\sqrt{2}i}\zeta_{n,|k|}\\
&= \hat{u}_n(k)\lambda_k\eta_{n,k},\quad \eta_{n,k}\sim N(0,1).
\end{align*}
We can now proceed as in Theorem \ref{T:convolution} to deduce that $|u_n(k)|^2\to 0$ whenever $|\lambda_k|^2 < 2e^\gamma$; note that the correlations between $\hat{u}_{n}(k)$ and $\hat{u}_n(-k)$ do not affect the argument. Now observe that $|\lambda_k|^2 = (2\pi^2k^2)^{-1} < 1 < 2e^\gamma$ for all $k$, and the result follows.
\end{proof}

\begin{remark}
The preceding results also holds if we replace the Brownian bridge
by a Gaussian process with precision operator the negative Laplacian
subject to Neumann boundary conditions and spatial mean zero; 
the eigenfunctions are then 
\[
\psi_j(x) = \sqrt{2}\cos(j\pi x) = \frac{1}{\sqrt{2}}\big(\varphi_j(x) + \varphi_{-j}(x)\big),\quad j\geq 1.
\]
The argument is identical, except no $\mathrm{sgn}(k)$ term appears in $\lambda_k$.
\end{remark}

\section{Numerical Illustrations}
\label{sec:I2}

We now study two of the constructions of deep Gaussian processes numerically. In subsection \ref{ssec:cov_fun} we look at realizations of the deep Gaussian process constructed using the covariance function formulation, and in subsection \ref{ssec:cov_op} we perform similar experiments for the covariance operator formulations. Finally we consider Bayesian inverse problems, in which we choose deep Gaussian processes as our prior distributions;
we introduce a function space MCMC algorithm, which scales well under
mesh refinement of the functions to be inferred, for sampling.

For the composition construction, numerical experiments can be found in, for example, \citet{damianou2013deep,duvenaud2014avoiding}. We do not provide numerical experiments for the convolution construction; Theorem \ref{T:convolution} tells us that interesting behaviour cannot be expected in this case.

\subsection{Covariance Function}
\label{ssec:cov_fun}
We start by investigating typical realizations of a deep Gaussian process, constructed through anisotropic covariance kernels as in section \ref{ssec:F}. As the basis of our construction, we choose a stationary Gaussian correlation kernel, given by
\[
\rho_\mathrm{S}(r) = \exp(-r^2),  \qquad r>0.
\]
The function $F$ determining the length scale of the kernel $\rho(\cdot, \cdot; u_n)$ is chosen as $F(x) = x^2$, such that $\Sigma(z) = \left(u_{n}(z)\right)^2 \, \mathrm{I}_d$. {Similar results are obtained with other choices of $F$ in terms of the distribution of samples $u_n$. The choice of $F$ does, however, influence the conditioning of the correlation matrix $\mathbf{R}(\mathbf u_n)$, and the choice $F(x) = \exp(x)$, for example, can lead to numerical instabilities.} As described in section \ref{ssec:F}, we will sample from the finite dimensional distributions obtained by sampling from the Gaussian process at a finite number of points in the domain $D$. To generate the samples, we use the command \texttt{mvnrnd} in MATLAB, and when plotting the samples, we use linear interpolation.

In Figure \ref{f:prior_1d_ker}, we show four independent realizations of the first seven layers $u_0, \dots, u_6$, where $u_0$ is taken as a sample of the stationary Gaussian process with correlation kernel $\rho_\mathrm{S}$. The domain $D$ is here chosen as the interval $(0,1)$, and the sampling points are given by the uniform grid $x_i = \frac{i-1}{256}$, for $i=1, \dots, 257$. Each column in Figure \ref{f:prior_1d_ker} corresponds to one realization, and each row corresponds to a given layer $u_n$, the first row showing $u_0$. We can clearly see the non-stationary behaviour in the samples when progressing through the levels. We note that the ergodicity of the chain is also reflected in the samples, with the distribution of the samples $u_n$ looking similar for larger values of $n$.  

Figure \ref{f:prior_2d_ker} shows the same information as Figure \ref{f:prior_1d_ker}, in the case where the domain $D$ is $(0,1)^2$ and the sampling points are the tensor product of the one-dimensional points $x^1_i = \frac{i-1}{64}$, for $i=1, \dots, 65$.

\begin{figure}
\begin{center}
\begin{overpic}[width=\linewidth,trim=1cm 1.5cm 2cm 1.5cm,clip]{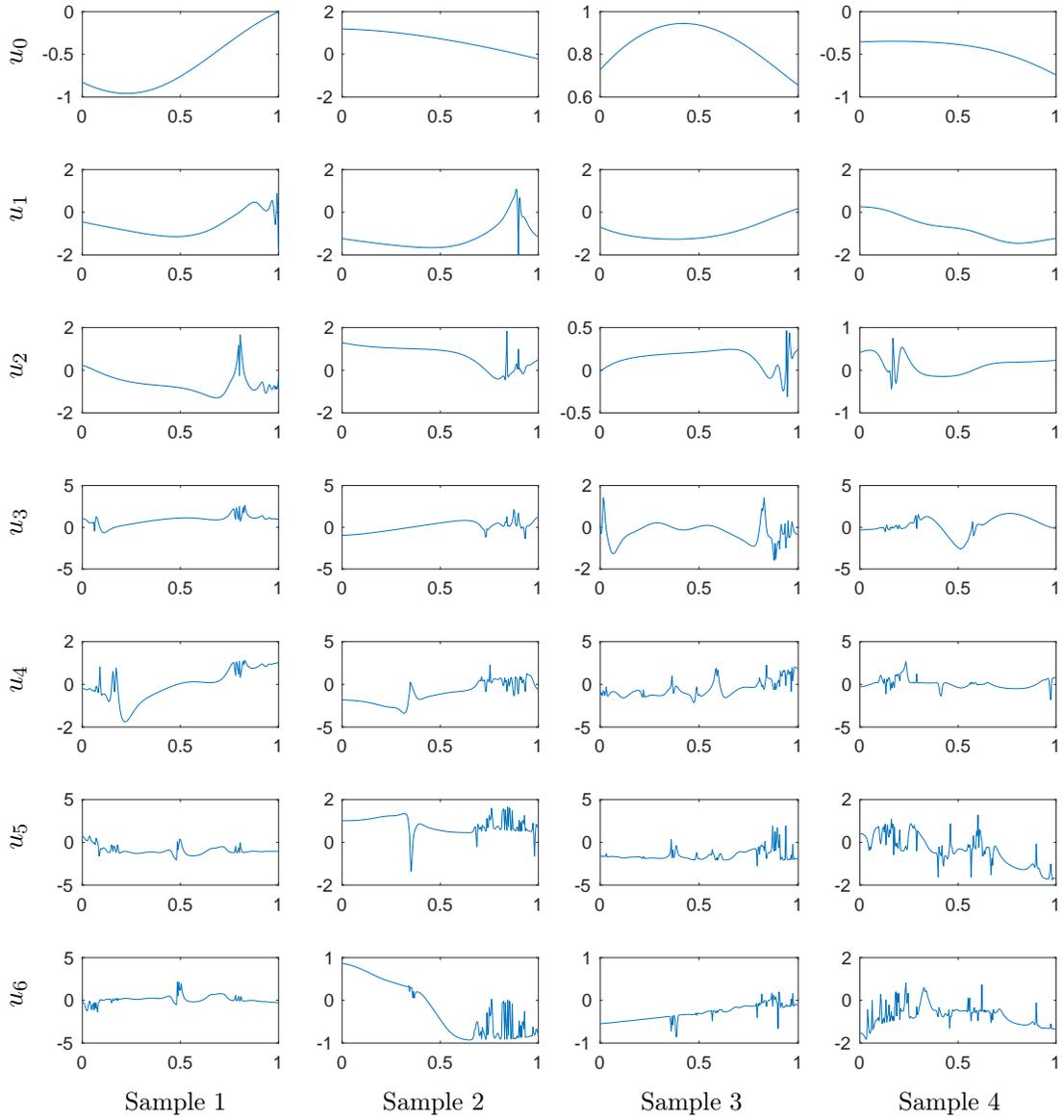}
\put(3.5,93.2){\rotatebox{90}{$u_0$}}
\put(3.5,79.5){\rotatebox{90}{$u_1$}}
\put(3.5,65.8){\rotatebox{90}{$u_2$}}
\put(3.5,52.1){\rotatebox{90}{$u_3$}}
\put(3.5,38.4){\rotatebox{90}{$u_4$}}
\put(3.5,24.7){\rotatebox{90}{$u_5$}}
\put(3.5,11.0){\rotatebox{90}{$u_6$}}

\put(13.5,2){\small Sample 1}
\put(36,2){\small Sample 2}
\put(58.5,2){\small Sample 3}
\put(81,2){\small Sample 4}
\end{overpic}
\end{center}
\caption{Four independent realizations of the first seven layers of a deep Gaussian process, in one spatial dimension, using the covariance kernel construction described in subsection \ref{ssec:F}. Each column corresponds to an independent chain, and layers $u_0,u_1,\ldots,u_6$ are shown from top-to-bottom.}
\label{f:prior_1d_ker}
\end{figure}

\begin{figure}
\begin{center}
\begin{overpic}[width=\linewidth,trim=1.5cm 1.7cm 1.7cm 0cm,clip]{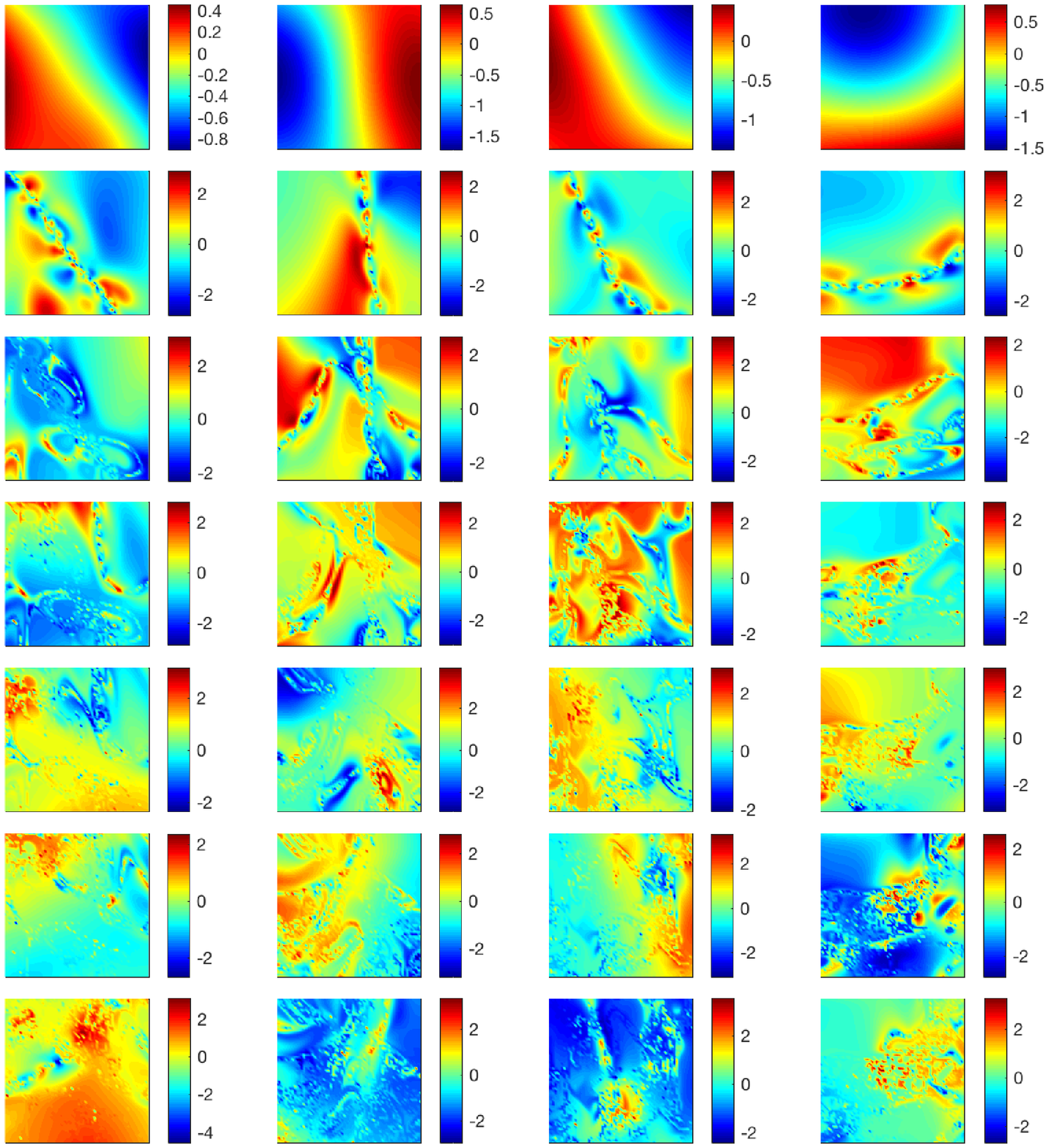}
\put(4,89){\rotatebox{90}{$u_0$}}
\put(4,76.2){\rotatebox{90}{$u_1$}}
\put(4,63.3){\rotatebox{90}{$u_2$}}
\put(4,50.4){\rotatebox{90}{$u_3$}}
\put(4,37.5){\rotatebox{90}{$u_4$}}
\put(4,24.6){\rotatebox{90}{$u_5$}}
\put(4,11.7){\rotatebox{90}{$u_6$}}

\put(11,2){\small Sample 1}
\put(32.2,2){\small Sample 2}
\put(53.4,2){\small Sample 3}
\put(74.6,2){\small Sample 4}
\end{overpic}
\end{center}
\caption{Four independent realizations of the first seven layers of a deep Gaussian process, in two spatial dimensions, using the covariance kernel construction described in subsection \ref{ssec:F}. Each column corresponds to an independent chain, and layers $u_0,u_1,\ldots,u_6$ are shown from top-to-bottom.}
\label{f:prior_2d_ker}
\end{figure}

\subsection{Covariance Operator}
\label{ssec:cov_op}

We now consider the covariance operator construction of the deep Gaussian process. In order to produce more interesting behaviour in the samples, we move away from the absolutely continuous setting considered in section \ref{ssec:O4} by introducing a rescaling of $C(u)$ that depends on $u$. This scaling is chosen so that the amplitude of samples is $\mathcal{O}(1)$ with respect to $u$. The rescaled family can be shown to satisfy Assumptions \ref{ass:op}, and a minorization condition as in Lemma \ref{lem:minor_op} can also be shown to hold when the state space is finite-dimensional. From this we can deduce that 
the resulting discretized process will still be ergodic.

Assume $D\subseteq \R^d$ and define the negative Laplacian $-\Delta$ on $D(-\Delta)$,
\[
D(-\Delta) = \left\{u \in H^2(D;\R)\,\bigg|\,\frac{\dee u}{\dee \nu}(x) = 0\text{ for }x \in \partial D\right\},
\]
where $\nu$ is the outward normal to $\partial D$. Given $\alpha > d/2$, $\sigma > 0$, we define $P = -\Delta$ and 
\begin{align}
\label{eq:cov_numerics}
C(u)^{-1} = \sigma^{-2}(P + \Gamma(u))^{\alpha/2} \Gamma(u)^{d/2-\alpha}(P + \Gamma(u))^{\alpha/2}
\end{align}
where $\big(\Gamma(u)v\big)(x) = F\big(u(x)\big)v(x)$. The scaling introduced is inspired by the SPDE representation of Whittle-Mat\'ern distributions \citet{matern_spde}; if $F(u) = \tau^2$ is chosen to be constant, then modulo boundary conditions, samples from a centred Gaussian distribution with covariance $C(u)$ are samples from a Whittle-Mat\'ern distribution. In particular, $\tau$ corresponds to the inverse length-scale of samples, and samples almost-surely have $s$ Sobolev and H\"older and derivatives for any $s < \alpha-d/2$.

For numerical experiments, we take
\[
F(u) = \min\{F_- + ae^{bu^2},F_+\}
\]
for some $F_+,F_-,a,b > 0$. In particular, in one spatial dimension we take {$F_+ = 150^2$, $F_- = 200$,  $a=100$ and $b = 2$}. In two dimensions, we take $F_+ = 150^2$, $F_- = 50$, $a=25$ and $b=0.3$. We take $\alpha = 4$ in both cases, and choose $\sigma$ such that $\mathbb{E}\big(u(x)^2\big) \approx 1$. These parameter choices were made empirically to ensure interesting structure of the samples. In order to generate samples at a given level, the negative Laplacian $P$ is constructed using a finite-difference method. Given $u$, the operator $A(u)$ is then computed, 
\[
A(u) := \sigma^{-1}\Gamma(u)^{d/4-\alpha/2}(P+\Gamma(u))^{\alpha/2},
\]
so that $v \sim N(0,C(u))$ solves the SPDE $A(u)v = \xi$, where $\xi$ is white noise.

In Figure \ref{f:prior_1d_op} we show samples of the deep Gaussian process on domain $D=(0,1)$, sampled on the uniform grid $x_i = \frac{i-1}{1000}$, for $i=1,\ldots,1001$. We show 4 independent realizations of the first seven layers of the process  -- each row corresponds to a given layer $u_n$. The anisotropy of the length-scale is evident in levels beyond $u_0$, and the effect of ergodicity is evident, with deeper levels having similar properties. Compared to the covariance function construction, local effects are less prominent, though a greater level of anisotropy could potentially be obtained by making an alternative choice of $F(\cdot)$. Figure \ref{f:prior_2d_op} shows the same experiments on domain $D=(0,1)^2$, sampled on the tensor product of the one-dimensional points $x_i^1 = \frac{i-1}{150}$, for $i=1,\ldots,151$, and the same effects are observed. {Figure \ref{f:norm_trace} shows the trace of the norm of a DGP $\{u_n\}$ with $d=1$, along with the running mean of these norms; the rapid convergence of the mean reflects the ergodicity of the chain.}

\begin{figure}
\begin{center}
\begin{overpic}[width=\linewidth,trim=1cm 1.1cm 2.3cm 1.5cm,clip]{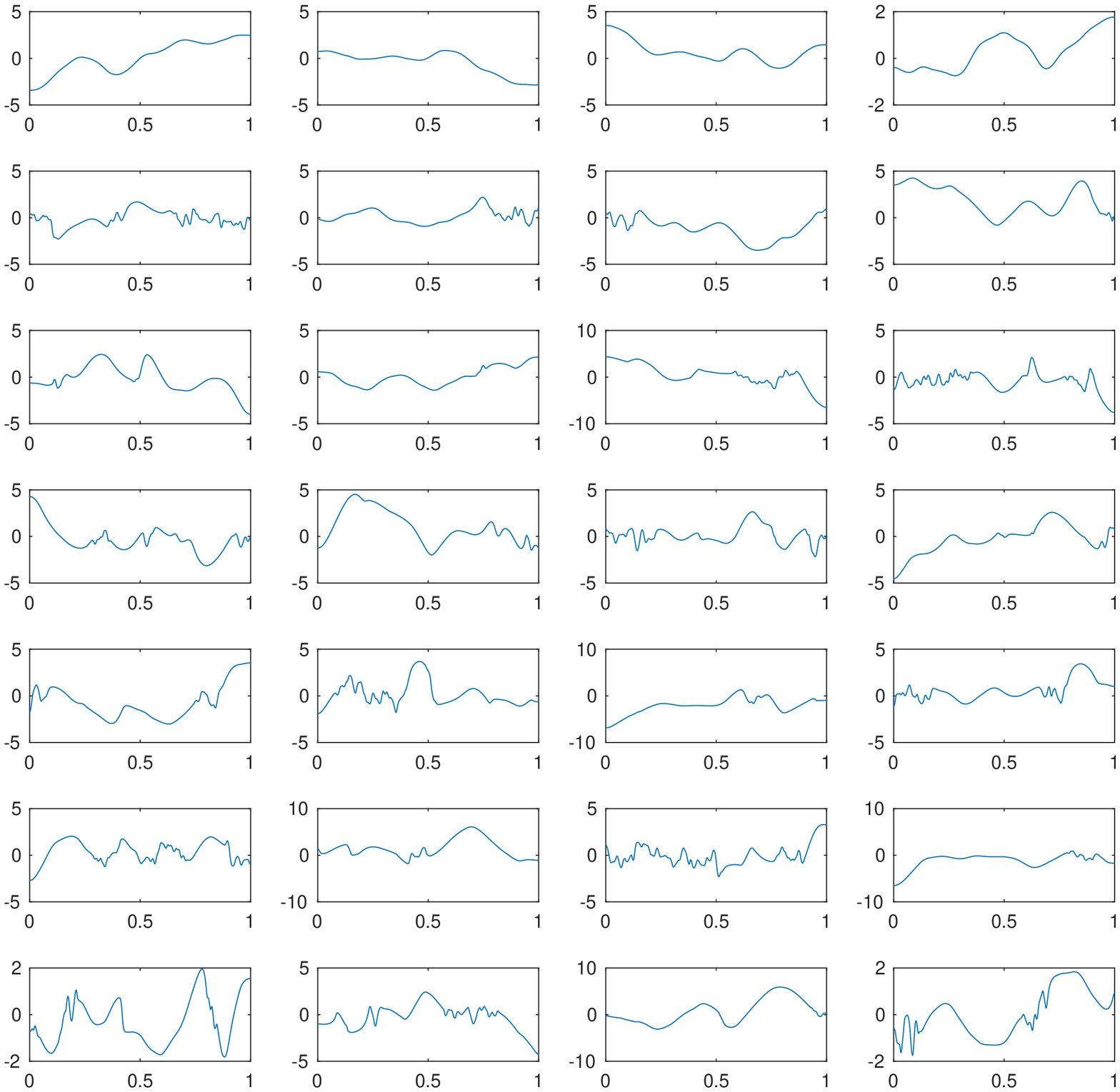}
\put(1.5,90){\rotatebox{90}{$u_0$}}
\put(1.5,76.5){\rotatebox{90}{$u_1$}}
\put(1.5,63.0){\rotatebox{90}{$u_2$}}
\put(1.5,49.5){\rotatebox{90}{$u_3$}}
\put(1.5,36.0){\rotatebox{90}{$u_4$}}
\put(1.5,22.5){\rotatebox{90}{$u_5$}}
\put(1.5,9.0){\rotatebox{90}{$u_6$}}

\put(11.5,0){\small Sample 1}
\put(35.8,0){\small Sample 2}
\put(60.1,0){\small Sample 3}
\put(84.5,0){\small Sample 4}
\end{overpic}
\end{center}
\caption{Four independent realizations of the first seven layers of a deep Gaussian process, in one spatial dimension, using the covariance operator construction described in subsection \ref{ssec:cov_op}. Each column corresponds to an independent chain, and layers $u_0,u_1,\ldots,u_6$ are shown from top-to-bottom.}
\label{f:prior_1d_op}
\end{figure}

\begin{figure}
\begin{center}
\begin{overpic}[width=\linewidth,trim=1.0cm 1.3cm 3.2cm 1.8cm,clip]{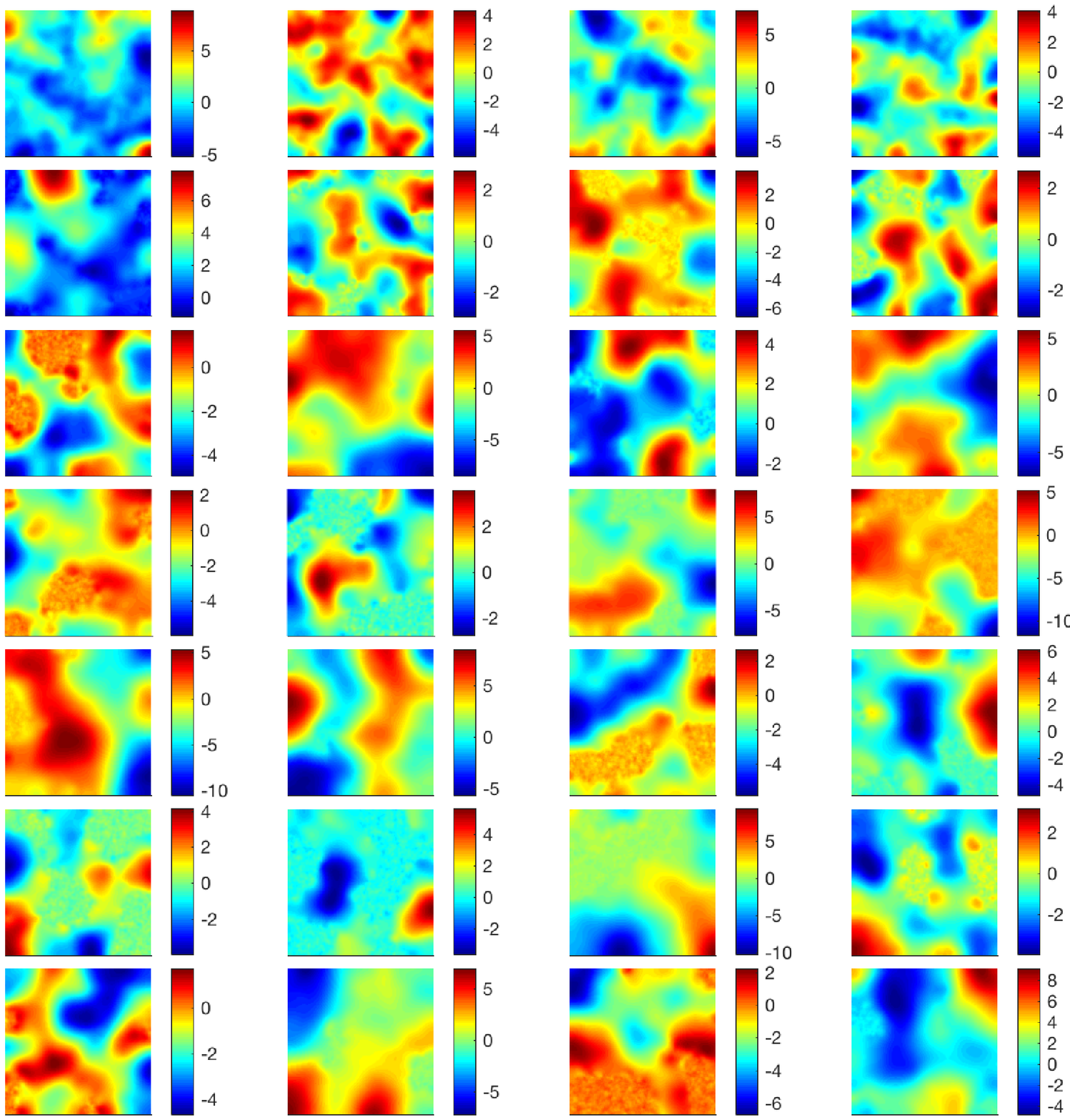}
\put(3,89.5){\rotatebox{90}{$u_0$}}
\put(3,76.3){\rotatebox{90}{$u_1$}}
\put(3,63.1){\rotatebox{90}{$u_2$}}
\put(3,49.9){\rotatebox{90}{$u_3$}}
\put(3,36.7){\rotatebox{90}{$u_4$}}
\put(3,23.5){\rotatebox{90}{$u_5$}}
\put(3,10.3){\rotatebox{90}{$u_6$}}

\put(11,0){\small Sample 1}
\put(34.3,0){\small Sample 2}
\put(57.6,0){\small Sample 3}
\put(80.9,0){\small Sample 4}
\end{overpic}
\end{center}
\caption{Four independent realizations of the first seven layers of a deep Gaussian process, in two spatial dimensions, using the covariance operator construction described in subsection \ref{ssec:cov_op}. Each column corresponds to an independent chain, and layers $u_0,u_1,\ldots,u_6$ are shown from top-to-bottom.}
\label{f:prior_2d_op}
\end{figure}

\begin{figure}
\begin{center}
\includegraphics[width=0.9\linewidth,trim=1.5cm 0cm 1.5cm 0cm,clip]{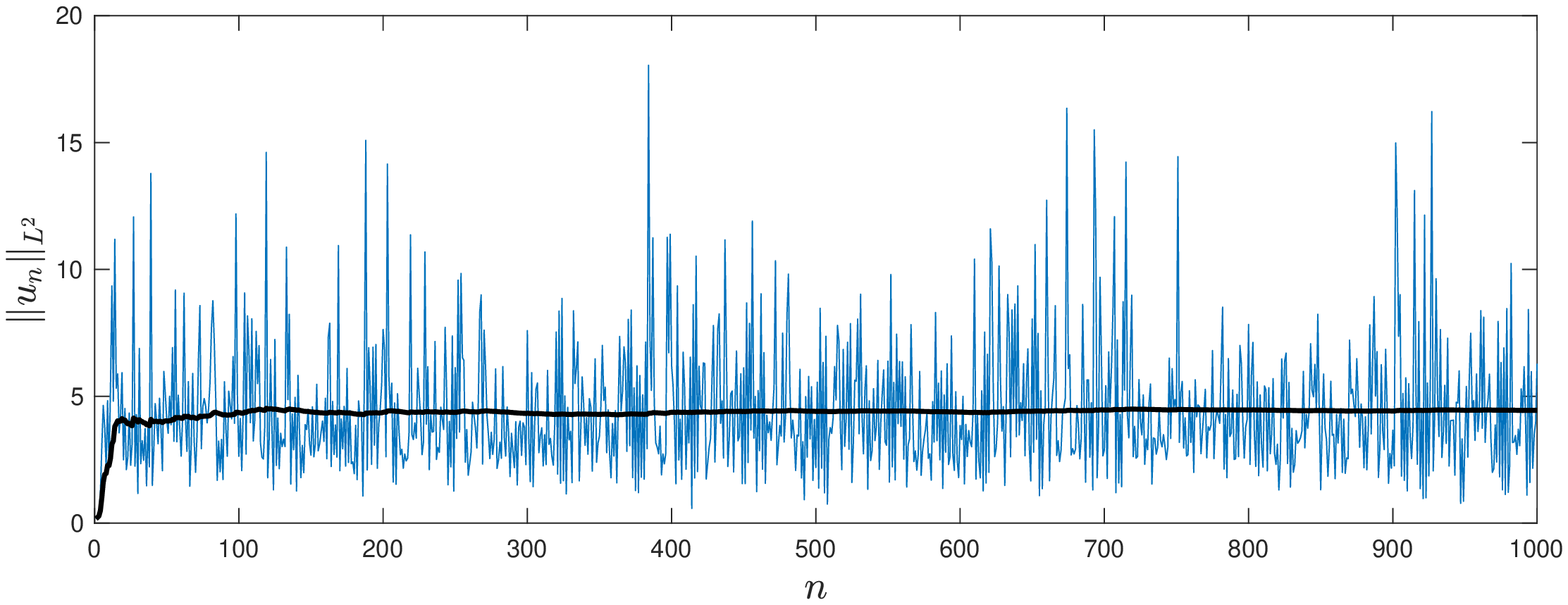}
\end{center}
\caption{{The trace of the norm of $u_n$ versus $n$ for a 1000 layer DGP $\{u_n\}$ as in Figure \ref{f:prior_1d_op}. The thick black curve shows the running mean of the norms.}}
\label{f:norm_trace}
\end{figure}

We emphasize that our perspective
on inference includes quite general inverse problems, and is not limited
to the problems of regression and classification which dominate much of
classical machine learning; this broad perspective on the potential
for the methodology affects the choice of algorithms that we study as we
do not exploit any of the special structures that arise in regression
and classification.

The deep Gaussian processes discussed in the previous sections were introduced with the idea of providing flexible prior distributions for inference, for example in inverse problems. The structure of such problems is as follows. We have data $y \in \R^J$ arising via the model
\begin{align}
\label{eq:data_c}
y = \mathcal{G}(u) + \eta
\end{align}
where $\eta$ is a realization of some additive noise, and $\mathcal{G}:X\to\R^J$ is a (typically non-linear) forward map. The map $\mathcal{G}$ may involve, for example, solution of a
partial differential equation which takes function $u$ as input, or point evaluations of a 
function $u$, regression. In this paper we will fix $X = \cH^{N}$, writing $u = (u_0,\ldots,u_{N-1}) \in X$; our prior beliefs on $u$ will then be characterized by the first $N$ states of a Markov chain of a form considered in the previous sections. Note that the map $\mathcal{G}$ could incorporate a projection map if the dependence is only upon a single state $u_{N-1}$; indeed this is the canonical example -- the variables $(u_0,\dots,u_{N-2})$ are 
viewed as hyperparameters in a prior on the parameter $u_{N-1}.$

\subsubsection{Algorithms}
We now turn to the design of algorithms for the Bayesian inference problems of
sampling $u|y.$ {As already mentioned above, we are typically only interested in sampling the deepest layer $u_{N-1}|y$. However, due to the hierarchical definition of $u_{N-1}$ given all the components of $u$, our algorithms work with the full set of layers $u$. }Since the components of $u$ are functions, and hence infinite
dimensional objects in general, a guiding principle is to design algorithms
which are well-defined on function space, an approach to MCMC inference reviewed 
in \citet{cotter2013mcmc}; the value of this approach is that it leads to
algorithms whose mixing time is not dependent on the number of mesh points used 
to represent the function to be inferred.  For simplicity of exposition we assume that the
observational noise $\eta$ is distributed as $N(0,\Gamma)$; this is not
central to our developments but makes the exposition concrete. 
Recalling that the Markov chain defining the prior beliefs is given by (\ref{eq:cg3}), we can consider the unknowns in the problem to be the variables $u = (u_0,\ldots,u_{N-1})$, which are correlated under the prior, or the variables $\xi = (\xi_0,\ldots,\xi_{N-1})$, where
we define $\xi_0=u_0$, which are independent under the prior. These variables are related via $u = T(\xi)$, where the components of the deterministic map $T:X\to X$ are defined iteratively by
\begin{align*}
T_1(\xi_0,\ldots,\xi_{N-1}) &= \xi_0,\\
T_{n+1}(\xi_0,\ldots,\xi_{N-1}) &= L(T_n(\xi_{0},\ldots,\xi_{N-1}))\xi_n,\;\;\;n = 1,\ldots,N-1.
\end{align*}
The data may then be expressed in terms of $\xi$ rather than $u$:
\begin{align}
\label{eq:data_nc}
y = \tilde{\mathcal{G}}(\xi) + \eta = \mathcal{G}(T(\xi)) + \eta
\end{align}
where our prior belief on $\xi$ is that its components are i.i.d. Gaussians. 
To be consistent with the notation introduced in \citet{papaspiliopoulos2007general,yu2011center}  (\ref{eq:data_c}) will be referred to as the \emph{centred} model and (\ref{eq:data_nc}) will be referred to as the \emph{non-centred} 
model. The space $\cH$ may be chosen differently in the
centred and non-centred cases.

Associated with the two data models are two likelihoods: $\bbP(y|u)$ and $\bbP(y|\xi)$. Assuming that the observational noise $\eta\sim N(0,\Gamma)$ is Gaussian, where $\Gamma \in \R^{J\times J}$ is a positive definite covariance matrix, the likelihoods are given by
\begin{align*}
\bbP(y|u) = \frac{1}{Z(y)}\exp\big(-\Phi(u;y)\big),\quad &\Phi(u;y):= \frac{1}{2}\big|\Gamma^{-\frac{1}{2}}(y-\mathcal{G}(u))\big|^2,\\
\bbP(y|\xi) = \frac{1}{\tilde{Z}(y)}\exp\big(-\tilde{\Phi}(\xi;y)\big),\quad &\tilde{\Phi}(\xi;y):= \frac{1}{2}\big|\Gamma^{-\frac{1}{2}}(y-\tilde{\mathcal{G}}(\xi))\big|^2.
\end{align*}
We may then apply Bayes' theorem to write down the posterior distributions $\bbP(u|y)$ and $\bbP(\xi|y)$:
\begin{align*}
\bbP(u|y)\propto \bbP(y|u)\bbP(u) \propto \exp\big(-\Phi(u;y)\big)\bbP(u),\\
\bbP(\xi|y)\propto \bbP(y|\xi)\bbP(\xi) \propto \exp\big(-\tilde{\Phi}(\xi;y)\big)\bbP(\xi).
\end{align*}

We know from \citet{cotter2013mcmc} that it is straightforward to design
algorithms to sample $\bbP(\xi|y)$ which are well-defined in infinite
dimensions, exploiting the fact that $\bbP(\xi)$ is Gaussian. 
An example of such an algorithm is:

\begin{algorithm}[Non-Centred Algorithm]{\textcolor{white}.}\\\vspace{-0.4cm}
\begin{enumerate}
\item  Fix $\beta_0,\ldots,\beta_{N-1} \in (0,1]$ and define $B = \mathrm{diag}(\beta_j)$. Choose initial state $\xi^{(0)} \in X$, and set $u^{(0)} = T(\xi^{(0)}) \in X$. Set $k = 0$.
\item Propose $\hat{\xi}^{(k)} = (I - B^2)^{\frac{1}{2}} \xi^{(k)} + B \zeta_j^{(k)},\quad\zeta^{(k)}\sim N(0,I)$.
\item Set $\xi^{(k+1)} = \hat{\xi}^{(k)}$ with probability
\fontsize{9}{12}\selectfont
\[
\alpha_{k} = \min\left\{1,\exp\left(\Phi\big(T(\xi^{(k)});y\big) - \Phi\big(T(\hat{\xi}^{(k)});y\big)\right)\right\};
\]
\fontsize{10}{12}\selectfont
otherwise set $\xi^{(k+1)} = \xi^{(k)}$.

\item Set $k \mapsto k+1$ and go to 1.
\end{enumerate}
\end{algorithm}

This algorithm produces a chain $\{\xi^{(k)}\}_{k\in\N}$ that samples $\bbP(\xi|y)$ 
in stationarity; and $\{T(\xi^{(k)})\}_{k\in\N}$ will be samples of $\bbP(u|y).$ 
By working in non-centred coordinates we have been able to design this 
algorithm which is well-defined on function space. 
d{If we were to work with the centred coordinates $u$ directly, the algorithm would not be well-defined on function space: in infinite dimensions, each family of measures $\{\mathbb{P}(u_n|u_{n-1})\}_{u_{n-1} \in \cH}$ will typically be mutually singular, and so a proposed update $u\mapsto \hat{u}$ will almost surely be rejected. To see why this rejection occurs in practice, in high finite dimensions $K$, notice that the acceptance probability for an update $u\mapsto \hat{u}$ will involve the ratios of the Gaussian densities $N(u_n;0,C(u_{n-1}))$ and $N(u_n;0,C(\hat{u}_{n-1}))$. These densities will decay to zero as the dimension $K$ is increased, and their ratio will only be well-defined in the limit if the measures are equivalent; consequently, the Markov chain will mix very poorly. Working with the non-centred coordinates $\xi$, the prior does not appear in the acceptance probability and so this issue is circumvented. Another advantage of using the non-centred coordinates is that there is no need to calculate the (divergent) log determinants which appear in the centred acceptance probability, avoiding potential numerical issues. These issues are discussed 
in greater depth and generality in \citet{omiros}. For the reasons set-out 
in that paper, including those above, we have used only the non-centred algorithm in what follows.}
When the forward model $\mathcal{G}(u) = Au$ is linear, the non-centred algorithm can be combined with standard Gaussian process regression techniques via the identity
\[
\PP(\dee u_N|y) = \int_X \PP(\dee u_N|u_{N-1},y)\PP(\dee u_{N-1}|y).
\]
The distribution $\PP(\dee u_N|u_{N-1},y) = N(m_y(u_{N-1}),C_y(u_{N-1}))$ is Gaussian, where expressions for $m_y,C_y$ are known, and so direct sampling methods are available. On the other hand, we have that $\PP(y|u_{N-1}) = N(0,AC(u_{N-1})A^* + \Gamma)$, and so we may use the non-centred algorithm to robustly sample the measure 
{\begin{align*}
\PP(\dee u_{N-1}|y) &= \exp(-\Psi(u_{N-1};y))\PP(\dee u_{N-1}),\\
\Psi(u_{N-1};y) &= \frac{1}{2}\|y\|_{AC(u_{N-1})A^* + \Gamma}^2 + \frac{1}{2}\log\det(AC(u_{N-1})A^* + \Gamma),
\end{align*}}
after reparametrizing in terms of $\xi$. This approach can be viable even when the data is particularly informative so that $\Phi$ is very singular -- this singularity does not in general pass to $\Psi$. It is this approach that we use for the simulations in the following subsections. An alternative approach not based on MCMC would be to use the non-centred parameterization of the 
Ensemble Kalman Filter \citet{CIRS17} which we have successfully
implemented in the context of the deep Gaussian processes of this paper,
but do not show here for reasons of brevity. 

\subsection{Application to Regression}

\subsubsection{One-Dimensional Simulations}

We consider first the case $D = (0,1)$, where the forward map is given by a number of point evaluations: $\mathcal{G}_j(u) = u(x_j)$ for some sequence $\{x_j\}_{j=1}^J\subseteq D$. We compare the quality of reconstruction versus both the number of point evaluations and the number of levels in the deep Gaussian prior. We use the same parameters for the family of covariance operators as in subsection \ref{ssec:cov_op}. The base layer $u_0$ is taken to be Gaussian with covariance of the form (\ref{eq:cov_numerics}), with $\Gamma(u)\equiv 20^2$.

The true unknown field $u^\dagger$ is given by the indicator function $u^\dagger = \mathds{1}_{(0.3,0.7)}$, shown in Figure \ref{f:truth_1d}. It is generated on a mesh of $400$ points, and three data sets are created wherein it is observed on uniform grids of $J=25$, $50$ and $100$ points, and corrupted by white noise with standard deviation $\gamma = 0.02$. Sampling is performed on a mesh of 200 points to avoid an inverse crime \citet{kaipio2006statistical}. $10^6$ samples are generated per chain, with the first $2\times 10^5$ discarded as burn-in when calculating means. The jump parameters $\beta_j$ are adaptively tuned to keep acceptance rates close to $30\%$.

In these experiments the deepest field is labelled as $u_N$, rather than
as $u_{N-1}$ as in the statement of the algorithm; this is purely for
notational convenience, of course.
In Figure \ref{f:means_1d} the means of the deepest field $u_N$ and of the length-scales associated with each hidden layer are shown, that is, approximations to $\mathbb{E}\big(u_N\big)$ and $\mathbb{E}\big(F(u_j)^{\frac{1}{2}}\big)$ for each $j=0,\ldots,N-1$. We see that, in all cases, the reconstructions of $u^\dagger$ are visually similar when two or more layers are used, and similar length-scale fields $\mathbb{E}\big(F(u_{N-1})^{\frac{1}{2}}\big)$ are obtained in these cases. The sharpness of these length-scale fields is related to the amount of data. Additionally, when $N=4$ and $J=100$ the location of the discontinuities is visible in the estimate for $\mathbb{E}\big(F(u_{N-2})^{\frac{1}{2}}\big)$, suggesting the higher quality data can influence the process more deeply. When $J=50$ or $J=25$, this layer does not appear to be significantly informed. When a single layer prior is used, the reconstruction fails to accurately capture the discontinuities. {Figure \ref{f:means_1d} also shows bands of quantiles of the values $u(x)$ under the posterior, illustrating their distribution; in particular the lack of symmetry and disagreement of the means and medians show that the posterior is clearly non-Gaussian. Uncertainty increases both as the number of observations $J$ and the layer $n$ in the chain is increased. Note in particular the over-confidence of the shallow Gaussian process posterior: the truth is not contained within $95\%$ credible intervals in all cases.}

In Table \ref{tab:errs_1d} we show the $L^1$-errors between the true field and the posterior means arising from the different setups. The errors decrease as the number of observation points is increased, as would be expected. Additionally, when $J=100$ and $J=50$, the accuracy of the reconstruction increases with the number of layers, though the most significant increase occurs when increasing from 1 to 2 layers. When $J=25$, the error increases beyond 2 layers, suggesting that some balance is required between the quality of the data and the flexibility of the prior.

In Figure \ref{f:means_1d_1M} we replace the uniformly spaced observations with $10^6$ randomly placed observations, to illustrate the effect of very high quality data. With 3 or 4 layers, more anisotropic behavior is observed in the length-scale field. Additionally, the layer $u_{N-2}$ is much more strongly informed than the cases with fewer observations, though the layer $u_{N-3}$ in the case $N=4$ does not appear to be informed at all, indicating a limitation on how deeply the process can be influenced by data. The corresponding errors are shown in Table \ref{tab:errs_1d} -- as in the cases $N=50,100$, more layers increases the accuracy of the mean, with diminishing returns for each additional layer. Note that higher accuracy could be attained in the single layer case by adjusting the constant length-scale parameter.

{Finally, in Figure \ref{f:means_1d_h}, we consider the same experiment as in Figure \ref{f:means_1d}, except observations are limited to the subset $(0,0.5)$ of the domain. Uncertainty is naturally higher in the unobserved portion of the domain. Uncertainty also increases in the observed layer $u_N$ as $N$ is increased; this could suggest that deep Gaussian processes may provide better generalization to unseen data than shallow Gaussian processes -- note that the truth has much higher probability under the posterior with 4 layers versus just 1.}

\begin{figure}
\begin{center}
\includegraphics[width=0.6\linewidth,trim=1.5cm 0cm 1.5cm 0cm,clip]{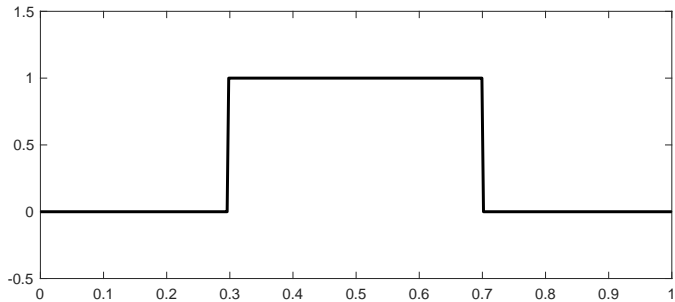}
\end{center}
\caption{The true field used to generate the data for the one-dimensional inverse problem.}
\label{f:truth_1d}
\end{figure}

\begin{figure}
\begin{center}
\includegraphics[width=\linewidth,trim=2.5cm 1.5cm 3cm 0cm,clip]{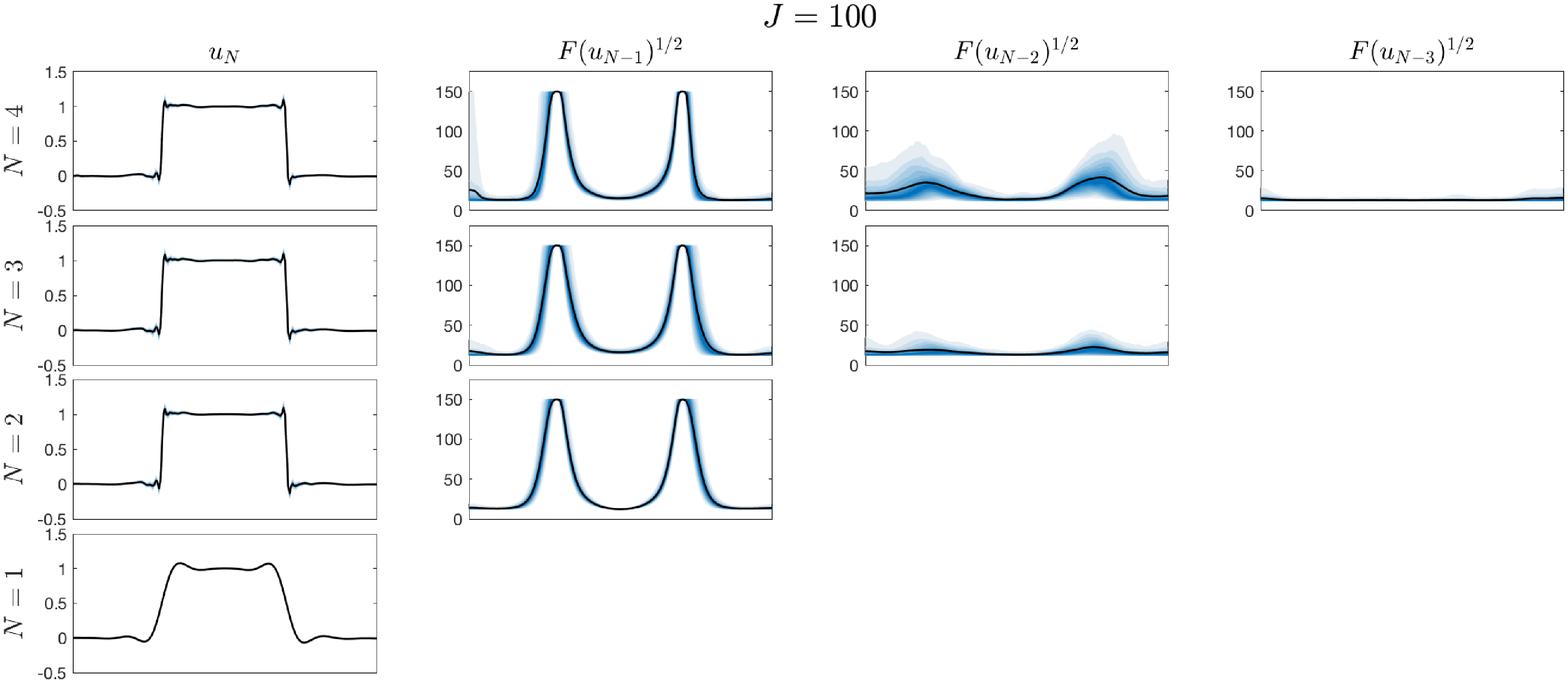}\\
\vspace{0.5cm}
\includegraphics[width=\linewidth,trim=2.5cm 1.5cm 3cm 0cm,clip]{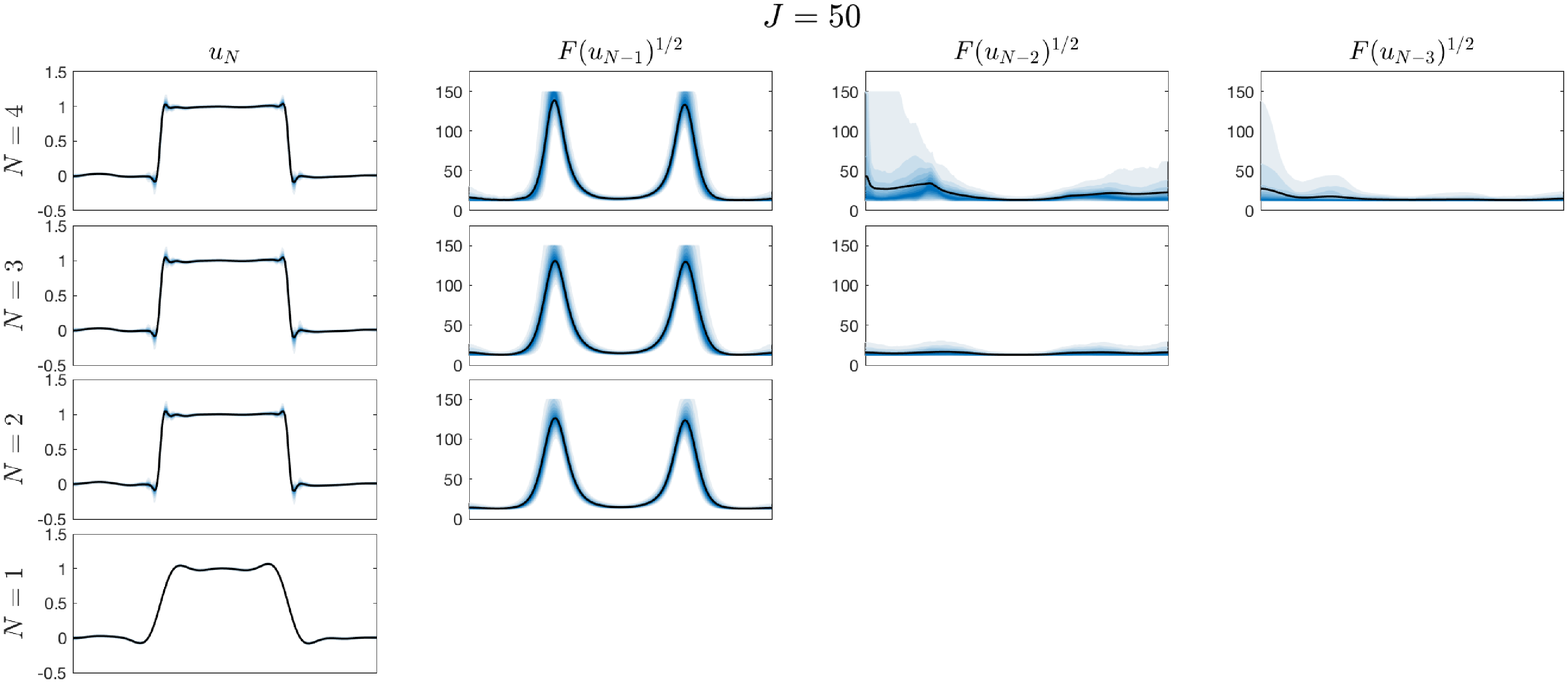}\\
\vspace{0.5cm}
\includegraphics[width=\linewidth,trim=2.5cm 1.5cm 3cm 0cm,clip]{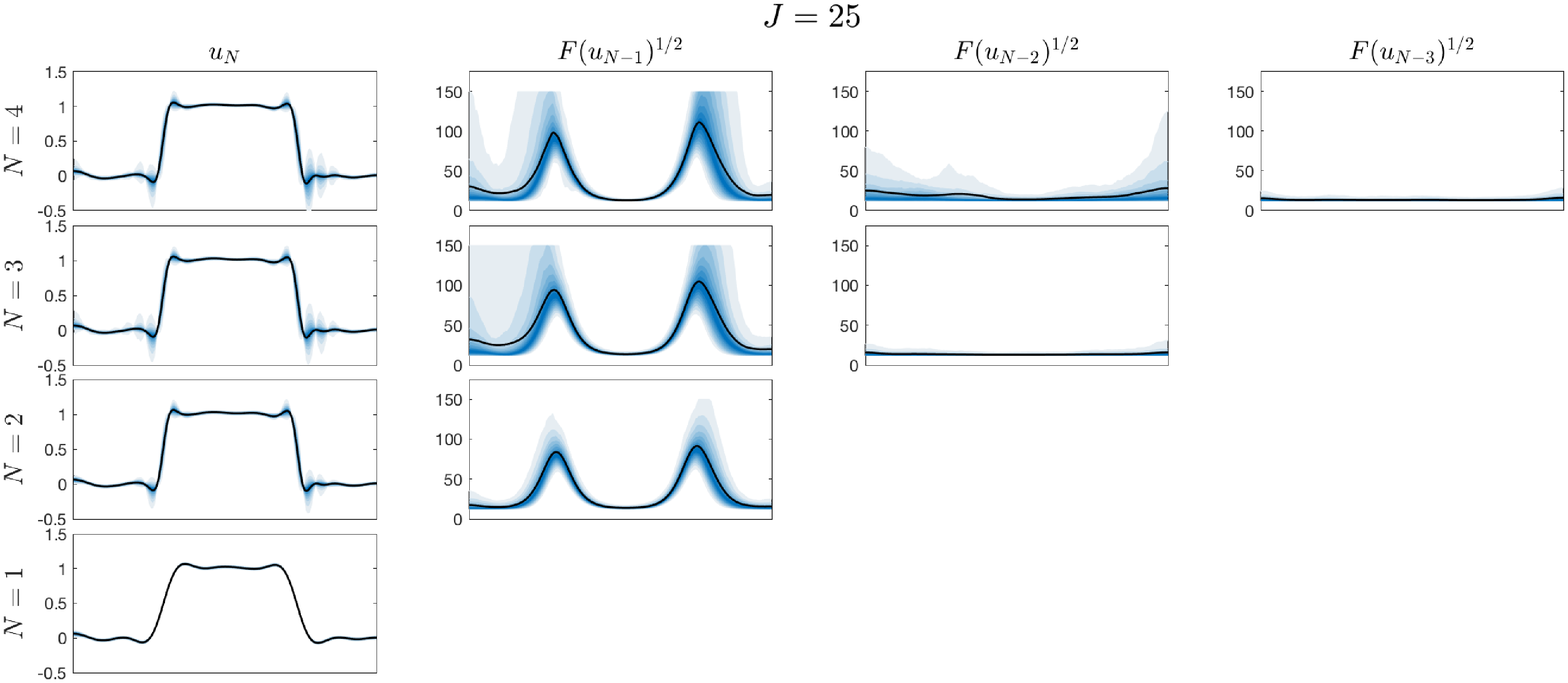}
\end{center}
\caption{{Estimates of posterior means (solid curves) and $5-95\%$ quantiles (shaded regions) arising from one-dimensional inverse problem. Number of data points taken are $J=100$ (top block), $J=50$ (middle block), $J=25$ (bottom block). From left-to right, results for $u_N$, $F(u_{N-1})^{\frac{1}{2}}$, $F(u_{N-2})^{\frac{1}{2}}$, $F(u_{N-3})^{\frac{1}{2}}$ are shown. From top-to-bottom within each block, $N=4,3,2,1$.}}
\label{f:means_1d}
\end{figure}

\begin{table}
\caption{The $L^1$-errors $\|u^\dagger - \mathbb{E}(u_N)\|_{L^1}$ between the true field and sample means for the one-dimensional simulations shown in Figure \ref{f:means_1d}{, for different numbers of data points $J$ and layers $N$}. Also shown are the corresponding errors for the simulations shown in Figure \ref{f:means_1d_1M}}.
\label{tab:errs_1d}
\bgroup
\def\arraystretch{1.5}
\begin{tabular}{c|r r r r}
\hline
$J$ & 1 layer & 2 layers & 3 layers & 4 layers\\ \hline
$100$ & 0.0485 & 0.0200 & 0.0198 & {\bf 0.0196}\\
$50$ & 0.0568 & 0.0339 & 0.0339 & {\bf 0.0337}\\
$25$ & 0.0746 & {\bf 0.0658} & 0.0667 & 0.0670\\
\hline
$10^6$ & 0.0131 & 0.000145 & {\bf 0.000133} & {\bf 0.000133}\\
\hline
\end{tabular}
\egroup
\end{table}

\begin{figure}
\begin{center}
\includegraphics[width=\linewidth,trim=2.5cm 1.5cm 3cm 0cm,clip]{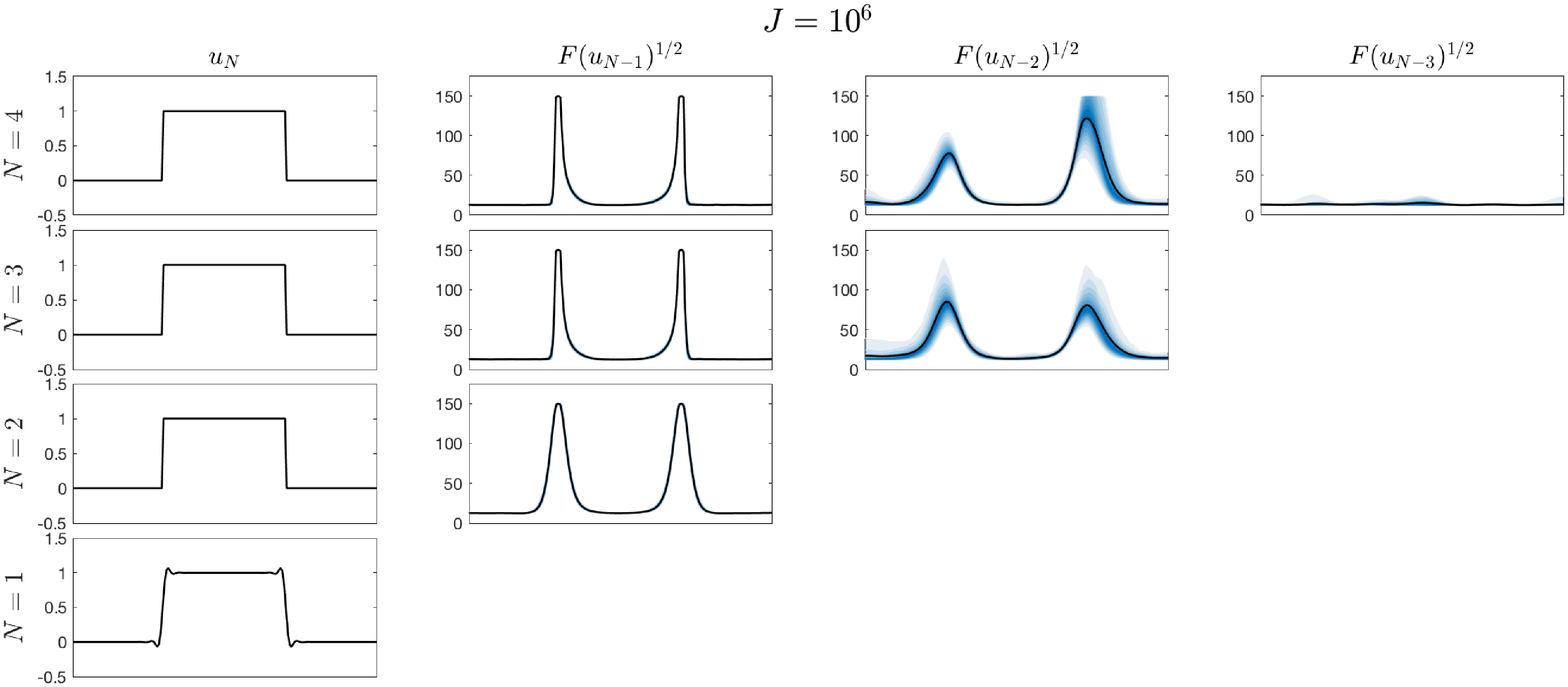}
\end{center}
\caption{{Estimates of posterior means (solid curves) and $5-95\%$ quantiles (shaded regions) arising from one-dimensional inverse problem, with $J=10^6$ {data points} Number of data points taken are $J=100$ (top block), $J=50$ (middle block), $J=25$ (bottom block). From left-to right, results for $u_N$, $F(u_{N-1})^{\frac{1}{2}}$, $F(u_{N-2})^{\frac{1}{2}}$, $F(u_{N-3})^{\frac{1}{2}}$ are shown. From top-to-bottom within each block, $N=4,3,2,1$.}}

\label{f:means_1d_1M}
\end{figure}

\begin{figure}
\begin{center}
\includegraphics[width=\linewidth,trim=2.5cm 1.5cm 3cm 0cm,clip]{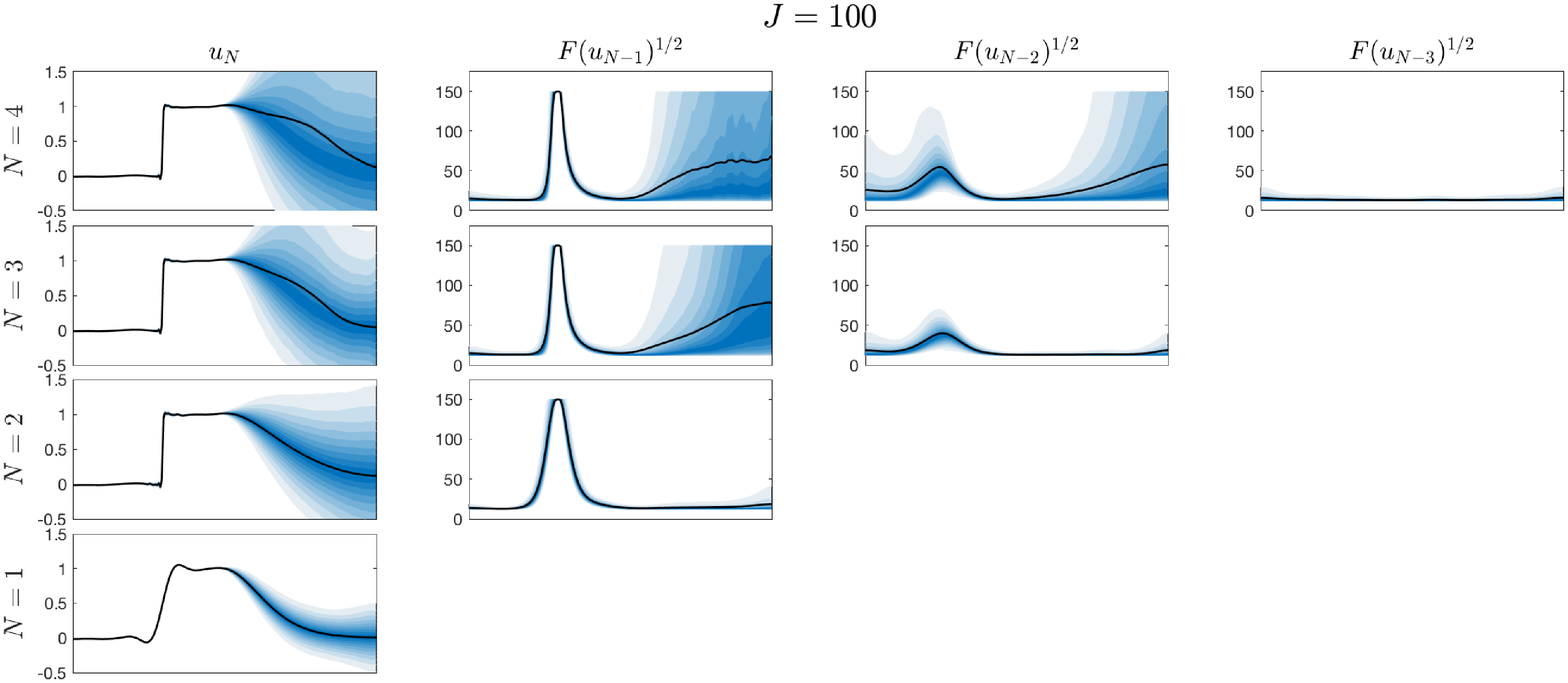}\\
\vspace{0.5cm}
\includegraphics[width=\linewidth,trim=2.5cm 1.5cm 3cm 0cm,clip]{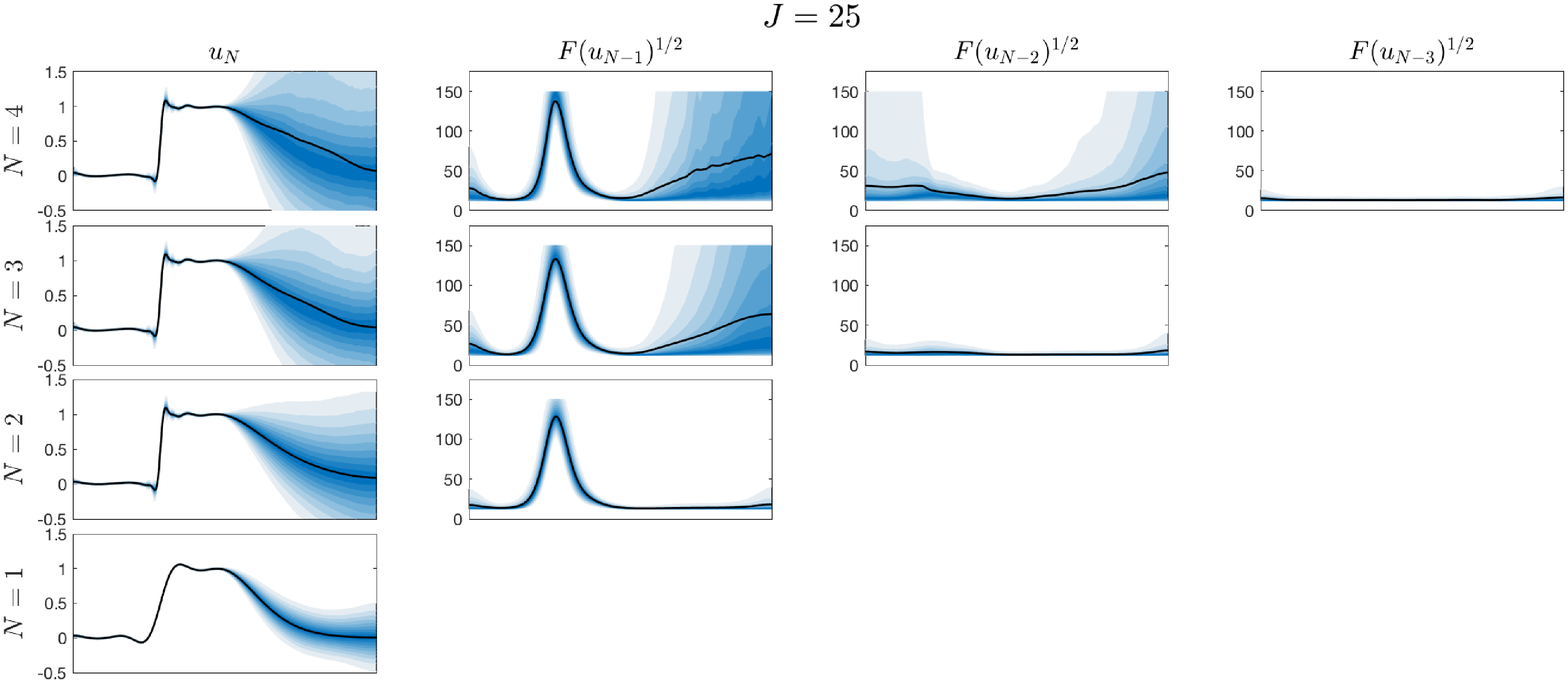}
\end{center}
\caption{{Estimates of posterior means (solid curves) and $5-95\%$ quantiles (shaded regions) arising from one-dimensional inverse problem. Number of data points taken are $J=100$ (top block), $J=50$ (middle block), $J=25$ (bottom block). From left-to right, results for $u_N$, $F(u_{N-1})^{\frac{1}{2}}$, $F(u_{N-2})^{\frac{1}{2}}$, $F(u_{N-3})^{\frac{1}{2}}$ are shown. From top-to-bottom within each block, $N=4,3,2,1$.}}
\label{f:means_1d_h}
\end{figure}

\subsubsection{Two-Dimensional Simulations}

We now consider the case $D = (0,1)^2$, again where the forward map is given by a number of point evaluations. We fix the number of point observations $J=2^{10}$, on a $2^5\times 2^5$ uniform grid. We again compare quality of reconstruction versus the number of point evaluations and the number of levels in the deep Gaussian prior, and use the same parameters for the family of covariance operators as in subsection \ref{ssec:cov_op}. The base layer $u_0$ is taken to be Gaussian with covariance of the form (\ref{eq:cov_numerics}), with $\Gamma(u)\equiv 20^2$.

The true unknown field $u^\dagger$ is constructed as a linear combination of truncated trigonometric functions with different length-scales, and shown in Figure \ref{f:truth_2d} along with its contours. It is given by
\begin{align*}
u^\dagger(x,y) &= \cos(2\pi x)\cos(2\pi y) + \sin(4\pi x)\sin(4\pi y)\mathds{1}_{(1/4,3/4)^2}(x,y)\\
&\hspace{1cm}+\sin(8\pi x)\sin(8\pi y)\mathds{1}_{(1/2,3/4)^2}(x,y)\\
&\hspace{1cm}+\sin(16\pi x)\sin(16\pi y)\mathds{1}_{(1/4,1/2)^2}(x,y).
\end{align*}
It is generated on a uniform square mesh of $2^{14}$ points, and two data sets are created wherein it is observed on uniform square grid of $J=2^{10}, 2^8$ points, and corrupted by white noise with standard deviation $\gamma=0.02$. Sampling is performed on a mesh of $2^{12}$ points to again avoid an inverse crime. $4\times 10^5$ samples are generated per chain, with the first $2\times 10^5$ discarded as burn-in when calculating means. Again the jump parameters $\beta_j$ are adaptively tuned to keep acceptance rates close to $30\%$.

In Figure \ref{f:means_2d}, analogously to Figure \ref{f:means_1d}, the means of $u_N$ and of the length-scales associated with each layer are shown, for $N=1,2,3$. When $J=2^{10}$, reconstructions are similar, though quality is generally proportional to the number of layers. In particular the, effect of too short a length-scale is evident in the case $N=1$, in the regions where the length-scale should be larger, and conversely the effect of too long a length-scale is evident in the cases $N=1,2$ in the region where the length-scale should be the shortest. In the cases $N=2,3$, the length-scale fields $\mathbb{E}\big(F(u_{N-1})^{\frac{1}{2}}\big)$ are similar, though in the case $N=3$ more accurately captures the true length-scales. When $J=2^8$ the reconstructions are again similar, though there is now less accuracy in the shapes of the contours. In particular, the effect of too short a length-scale is especially evident in the case $N=1$. The values of the reconstructed fields in the area of shortest length-scale are inaccurate in all cases -- the positions of the observation points meant that the actual values of the peaks were not reflected in the data. The fields $\mathbb{E}\big(F(u_{N-1})^{\frac{1}{2}}\big)$ have similar structure to the case $J=2^{10}$, though less accurately represent the true length scales. The $L^2$-errors between the means and the truth are shown in Table \ref{tab:errs_2d}

\begin{figure}
\begin{center}
\includegraphics[width=0.45\linewidth,trim=1cm 0cm 1cm 0cm,clip]{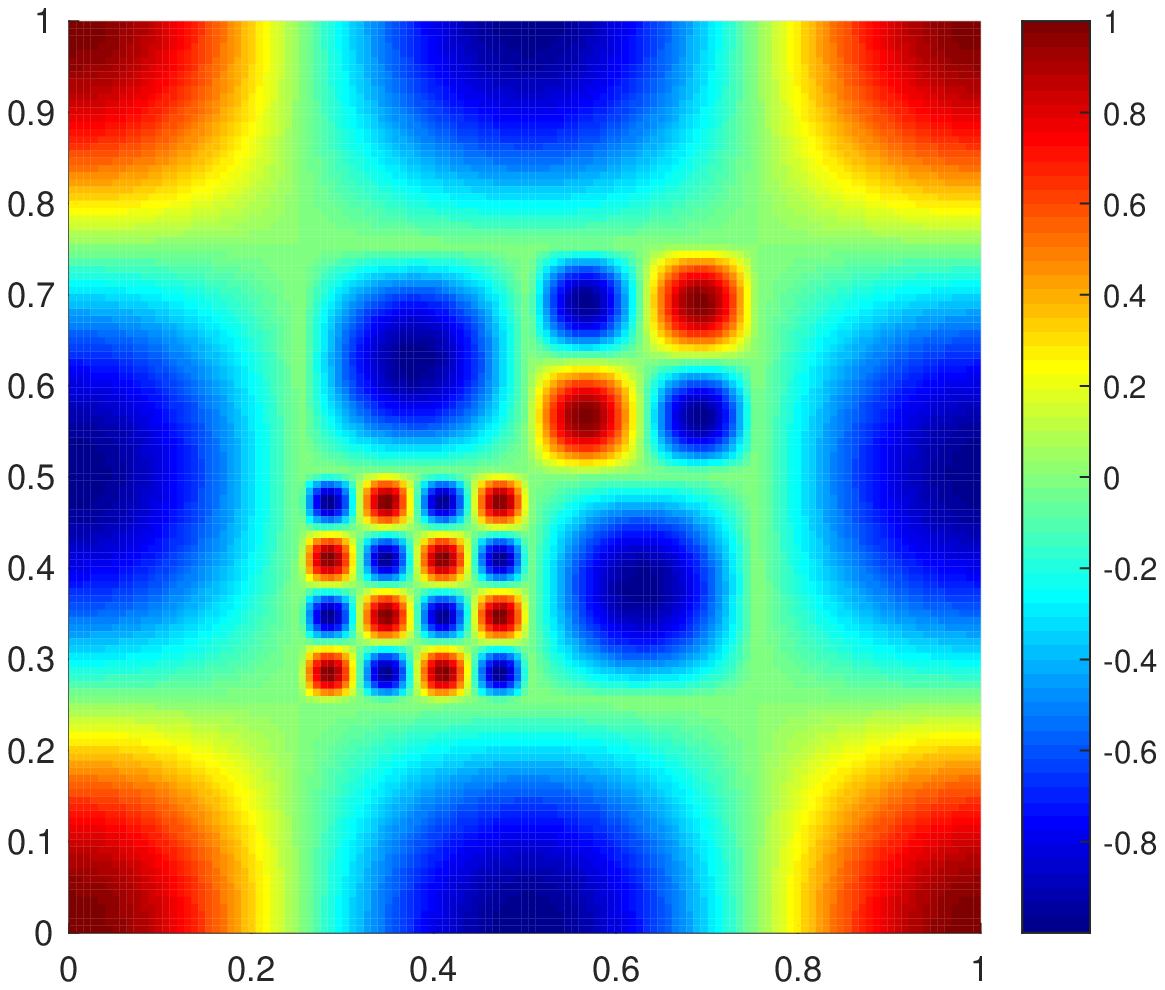}
\includegraphics[width=0.45\linewidth,trim=1cm 0cm 1.2cm 0.5cm,clip]{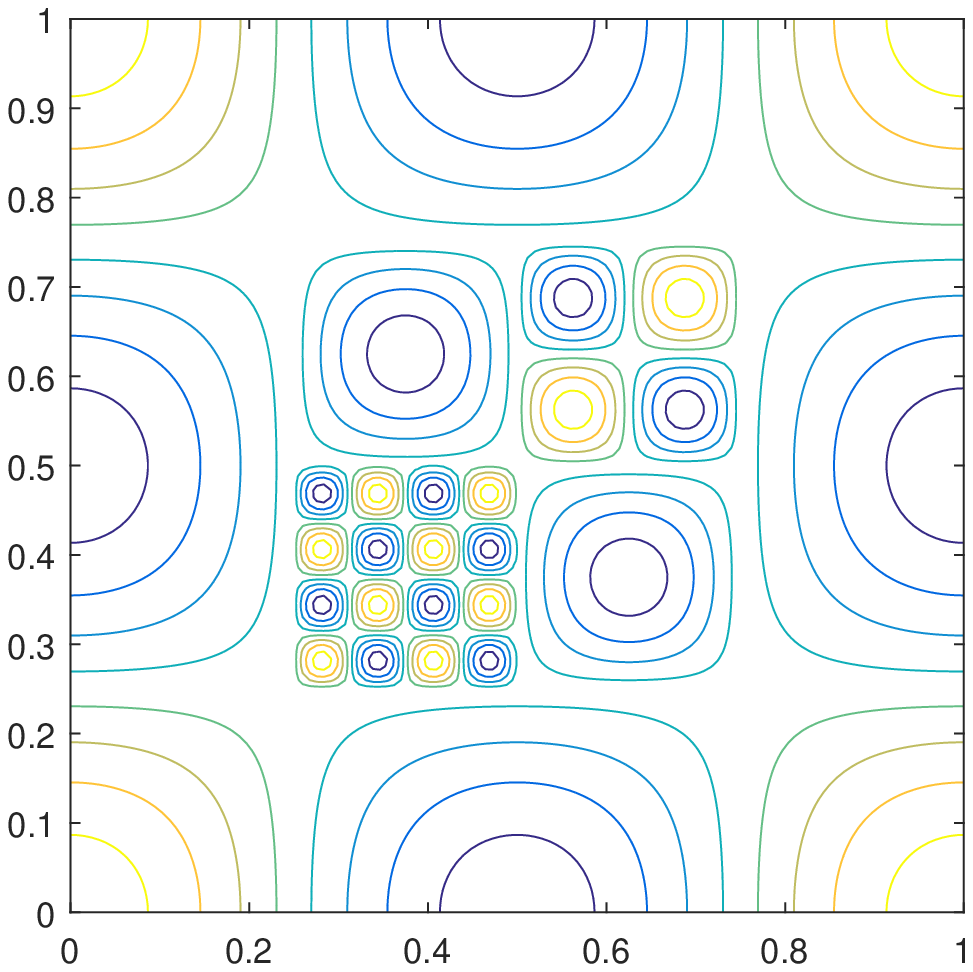}
\end{center}
\caption{The true field used to generate the data for the two-dimensional inverse problem.}
\label{f:truth_2d}
\end{figure}

\begin{figure}
\begin{center}
\includegraphics[width=0.80\linewidth,trim=2cm 1cm 2cm 1cm,clip]{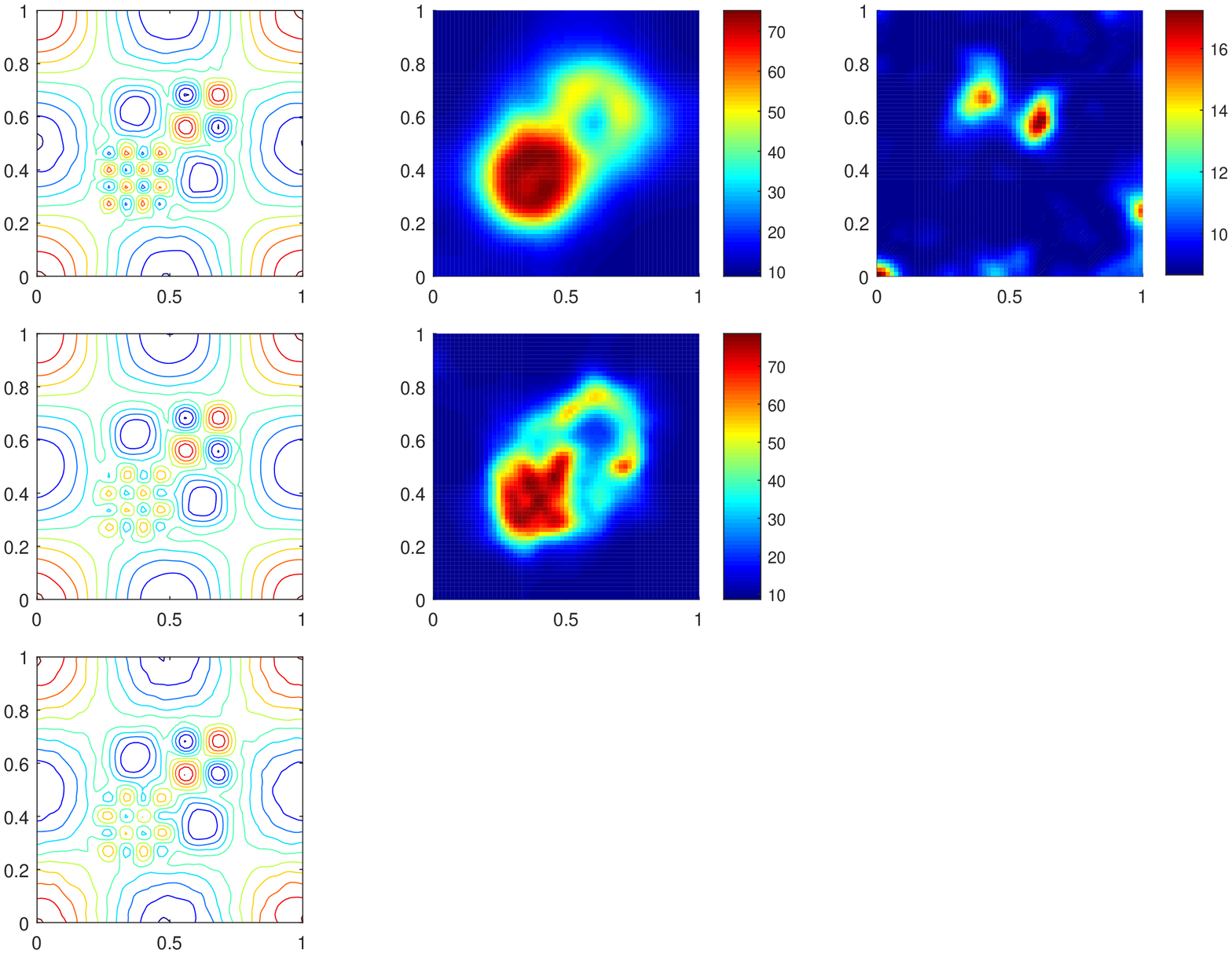}
\includegraphics[width=0.80\linewidth,trim=2cm 1cm 2cm 1cm,clip]{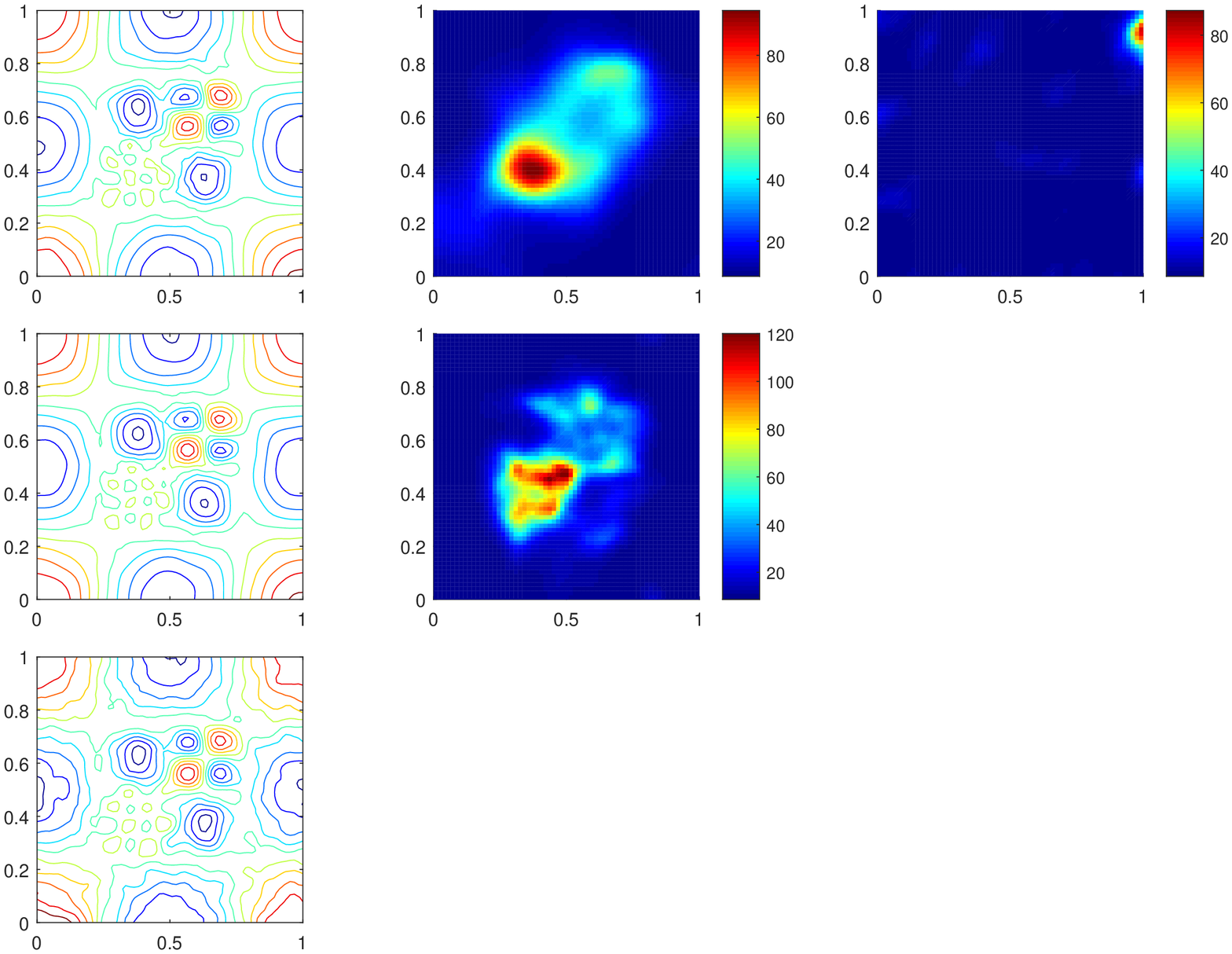}
\end{center}
\caption{Estimates of posterior means arising from two-dimensional inverse problem. (Top block) $J = 2^{10}$, (Bottom block) $J=2^8$. From left-to right,  $\mathbb{E}\big(u_N\big)$, $\mathbb{E}\big(F(u_{N-1})^{\frac{1}{2}}\big)$, $\mathbb{E}\big(F(u_{N-2})^{\frac{1}{2}}\big)$. From top-to-bottom within each block, $N=3,2,1$.}
\label{f:means_2d}
\end{figure}

\begin{table}
\caption{The $L^2$-errors $\|u^\dagger - \mathbb{E}(u_N)\|_{L^2}$ between the true field and sample means for the two-dimensional simulations shown in Figure \ref{f:means_2d}{, for different numbers of data points $J$ and layers $N$.}}
\label{tab:errs_2d}
\bgroup
\def\arraystretch{1.5}
\begin{tabular}{c|r r r}
\hline
$J$ & 1 layer & 2 layers & 3 layers\\ \hline
$2^{10}$ & 0.0856 & 0.0813 & {\bf 0.0681}\\
$2^8$ & 0.1310 & {\bf 0.1260} & 0.1279\\
\hline
\end{tabular}
\egroup
\end{table}

\section{Conclusions, Discussion, and Actionable Advice}
\label{sec:C}



\subsection{Comparison of Deep GP Constructions}
{We have considered four different constructions of deep GPs and we now discuss
their relative merits. We also consider the context of variational
inference which is popular in machine learning primarily because of its tractability.
We emphasize however that it forms an uncontrolled approximation of the true posterior
distribution and may fail to adequately represent the posterior distribution,
and uncertainty in particular.} 

{The {\bf composition} construction is the classical construction introduced in \citet{damianou2013deep}, building a hierarchy of layers using a stationary covariance function and composition. It has received the most study, and methods for variational inference have already been established. It has the advantage of scaling well with respect to data dimension $d$, however accurate sampling methods such as MCMC are intractable for large numbers of data points, due to the requirement to construct and factor dense covariance matrices at every step.

The {\bf covariance function} construction builds the hierarchy using a stationary covariance function, and iteratively modifying its associated length scale. It has the advantage that each layer can be readily interpreted as the anisotropic length-scale field of the following layer. Its scaling properties are similar to those of the composition construction, however variational inference methods for this construction have not yet been studied.

The {\bf covariance operator} construction builds the hierarchy using an SPDE representation of stationary Matern fields, and again iteratively modifies their associated length scale.  It allows for fast sampling in low data dimension $d$ via the use of PDE solvers, even when the number of data points is large. Accurate sampling via MCMC methods is tractable with this construction, due to the low cost of constructing and storing the inverse covariance (precision) matrix. Inference when $d$ is large appears to be intractable at present, due to the requirement of dense meshes for PDE solvers.

Finally, the {\bf convolution} construction builds the hierarchy via iterative convolution of Gaussian random fields. It has the advantage of being amenable to analysis, however the results of this analysis indicate that it would likely be a poor construction to use for inference due to trivial behaviour for large depth.

To summarize the numerical results on illustrative regression problems from the previous section, if the data is high quality, a small number of layers in the DGP will be sufficient as the problem becomes closer to interpolation. Conversely, if the data is low quality the likelihood is not strong enough to inform deeper layers in the DGP, and so a small number of layers is again sufficient. As a consequence,
when the data lies between these two cases, and the truth has sufficiently rich structure, the use of deeper processes may be advantageous, but
care is required to limit the number of layers employed.

}

\subsection{Summary and Future Work}
There are a number of interesting ways in which this work may be generalized.
 Within the context
of covariance operators it is of interest to construct covariances $C(u)$ which
are defined as $L^{-\alpha}$ with $L$ being the divergence form elliptic operator
$$Lu=-\nabla \cdot\bigl(F(u)\nabla u\bigr).$$ 
Such a construction allows for the conditional distributions of the layers to be viewed 
as stationary on deformed spaces \citet[\S 3.4]{matern_spde}, or to incorporate anisotropy 
in specific directions \citet[\S 3.1]{roininen2014whittle}. Similar notions of anisotropy in different directions can be incorporated into the covariance function formulation by choosing the length scale $\Sigma(z)$ different to a multiple of the identity matrix.
{Additionally, we could consider 
a non-zero mean in the iteration (\ref{eq:cg2}), as in  \citet{duvenaud2014avoiding,salimbeni2017doubly}, allowing for forcing of the system. For example, with the choice $m(u_n) = u_n$ and a rescaling of the covariance, we obtain the ResNet-type iteration
\[
u_{n+1} = u_n + \sqrt{\Delta t} L(u_n)\xi_{n+1}.
\]
This may be viewed as a discretization of the continuous-time stochastic
differential equation 
\[
{\dee u}= L(u)L(u)^\top{\dee W},
\]
analogously to what has been considered for neural networks \citet{haber2017stable}. Study of these systems could be insightful, for example deriving conditions to ensure a lack of ergodicity and hence arbitrary depth.}
{As before $\top$ denotes the adjoint operation.}

And finally it is possible to consider processes outside the four categories considered
here; for example the one-step transition from $u_n$ to $u_{n+1}$ might be defined
via stochastic integration against i.i.d. Brownian motions.

We have shown how a number of ideas in the literature may be recursed to produce
deep Gaussian processes, different from those appearing in \citet{damianou2013deep}.
We have studied the effective depth of these processes, either through demonstrating
ergodicity, or through showing convergence to a trivial solution (such as $0$ or
$\infty$). Together these results demonstrate that, as also shown in
\citet{duvenaud2014avoiding} for the original construction of deep Gaussian processes,
care is needed in order to design processes with significant depth. Nonetheless,
even a few layers can be useful for inference purposes, and we have demonstrated
this also. d{It is an interesting question to ask precisely how the approximation power and effective depth are affected by the number of layers of the process, both in the non-ergodic case, and in the ergodic case before stationarity has been reached.} 

{We also emphasize that the analysis in the paper is based solely on the deep Gaussian process $u_n$, and not the conditioned process $u_n|y$ in the inference problem with observed data $y$. The ergodicity properties of $u_n$ do not directly carry over to $u_n|y$. As we have seen in the numerical experiments, the number of layers required in the inference problem in practice depends on the information content in the observed data $y$, and the analysis in this paper does not fully answer the question as to how many. The results in this paper do show, however, that in the case of ergodic constructions, the expressive power of the {\em prior} distribution in the inference problem does not increase past a certain number of layers. This provides some justification for using only a moderate number of layers in a deep Gaussian process prior in inference problems.}

There are interesting approximation theory questions around deep processes,
such as those identified in the context of neural networks in \citet{pinkus1999approximation}.
There are also interesting questions around the use of these deep processes
for inversion; in particular it seems hard to get significant value from using
depth of more than two or three layers for noisy inverse problems. 
On the algorithmic side the issue of efficiently sampling these deep processes (even
over only two layers), when conditioned on possibly nonlinear observations
remains open. We have used non-centred parameterizations because these
{may} be sampled using function-space MCMC \citet{cotter2013mcmc,omiros}; 
but centred methods, or mixtures, may be desirable for some applications.

\bibliographystyle{natbib}
\bibliography{bib_dgp}

\section*{Appendix}

\begin{proof}[Proof of Proposition \ref{thm:nonstat}] The stationary kernel $\rho_\mathrm{S}$ is positive definite by Assumption \ref{ass:posdef_stat}, and so by \citet[Theorem 7.14]{wendland}, we have
\[
\rho_\mathrm{S}(r) = \int_{0}^\infty \exp(-r^2 t)\,\mathrm{d} \nu(t) \quad \text{for all } r \in [0,\infty),
\]
for a finite, non-negative Borel measure $\nu$ on $[0,\infty)$ that is not concentrated at 0 (i.e. it is not a multiple of the Dirac measure centred at 0). 

For any $x \in \R^d$ and $t \in [0,\infty)$, let us now define the matrix $ \tilde \Sigma_t(x) := (4t)^{-1} \Sigma(x)$ and the functions
\[
K_{x,t}(z) = \frac{1}{(2 \pi)^{d/2} |\tilde \Sigma_t(x)|^{1/2}}\exp\bigg(-\frac{1}{2}(x- z)^T \tilde \Sigma_t(x)^{-1} (x - z)\bigg).
\]
Here $|\cdot|$ denotes determinant and so the preceding is simply
an expression for a normal density with mean $x$ and covariance matrix $\tilde \Sigma_t(x)$ when $t>0$; at $t=0$, we simply have $K_{x,t}(z) = 0$, for all $x,z \in \R^d$. Then $\rho(x,x')$ is given by
\begin{align*}
&\frac{2^\frac{d}{2} |\Sigma(x)|^\frac{1}{4} |\Sigma(x')|^\frac{1}{4}}{|\Sigma(x) + \Sigma(x')|^\frac{1}{2}} \rho_\mathrm{S}\Big(\sqrt{Q(x,x')}\Big) 
= \frac{2^\frac{d}{2} |\Sigma(x)|^\frac{1}{4} |\Sigma(x')|^\frac{1}{4}}{|\Sigma(x) + \Sigma(x')|^\frac{1}{2}} \int_{0}^\infty \exp\big(- t Q(x,x')\big)\,\mathrm{d} \nu(t) \\
&= \frac{2^\frac{d}{2} |\Sigma(x)|^\frac{1}{4} |\Sigma(x')|^\frac{1}{4}}{|\Sigma(x) + \Sigma(x')|^\frac{1}{2}}  \int_{0}^\infty \exp\bigg(- t (x-x')^T\left(\frac{\Sigma(x) + \Sigma(x')}{2}\right)^{-1} (x-x')\bigg)\,\mathrm{d} \nu(t) \\
&= 2^\frac{d}{2} \int_{0}^\infty \frac{ |\tilde \Sigma_t(x)|^\frac{1}{4} |\tilde \Sigma_t(x')|^\frac{1}{4}}{|\tilde \Sigma_t(x) + \tilde \Sigma_t(x')|^\frac{1}{2}}   \exp\bigg(- \frac{1}{2} (x-x')^T\left(\tilde \Sigma_t(x) + \tilde \Sigma_t(x')\right)^{-1} (x-x')\bigg)\, \mathrm{d} \nu(t) \\
&= (2 \pi)^\frac{d}{2} 2^\frac{d}{2}  \int_{0}^\infty |\tilde \Sigma_t(x)|^\frac{1}{4} |\tilde \Sigma_t(x')|^\frac{1}{4} \int_{\R^d} K_{x,t}(z) K_{x',t}(z)\,\mathrm{d}z\, \mathrm{d} \nu(t),
\end{align*}
where in the last step, we have used the fact that the convolution $\int_{\R^d} K_{x,t}(z) K_{x',t}(z)\,\mathrm{d} z$ can be calculated explicitly using properties of normal random variables. More precisely, we have
\[
\int_{\R^d} K_{x,t}(z) K_{x',t}(z)\,\mathrm{d} z = \int_{\R^d} p_X(z-x) p_{X'}(z)\, \mathrm{d} z = \int_{\R^d} p_{X,X'} (z-x, z) \,\mathrm{d} z,
\]
where $p_X$ is the density of $X \sim N(0, \tilde \Sigma_t(x))$, $p_{X'}$ is the density of $X' \sim N(x', \tilde \Sigma_t(x'))$ and $X$ and $X'$ are independent. The change of variable from $X,X'$ to $W,X'$, where $W = X'-X$, has Jacobian 1, and so
\[
\int_{\R^d} p_{X,X'} (z-x, z)\,\mathrm{d} z = \int_{\R^d} p_{W,X'}(z-(z-x), z)\,\mathrm{d} z = \int_{\R^d} p_{W,X'} (x, z)\,\mathrm{d} z = p_W(x).
\]
Since $W = X' - X \sim N(x', \tilde \Sigma_t(x) + \tilde \Sigma_t(x'))$, we hence have
\begin{align*}
\int_{\R^d} &K_{x,t}(z) K_{x',t}(z)\,\mathrm{d} z \\
&= \frac{1}{(2 \pi)^\frac{d}{2} |\tilde \Sigma_t(x) + \tilde \Sigma_t(x')|^\frac{1}{2}}  \exp\bigg(- \frac{1}{2} (x-x')^T\left(\tilde \Sigma_t(x) + \tilde \Sigma_t(x')\right)^{-1} (x-x')\bigg),
\end{align*}
as required.

Now, for any $b \in \R^N$ and pairwise distinct $\{x_i\}_{i=1}^N$, we then have
\begin{align*}
&\sum_{i=1}^N \sum_{j=1}^N b_i b_j \rho(x_i,x_j) \\
&= (2 \pi)^\frac{d}{2} 2^\frac{d}{2} \sum_{i=1}^N \sum_{j=1}^N b_i b_j \int_{0}^\infty \int_{\R^d}  |\tilde \Sigma_t(x_i)|^\frac{1}{4} K_{x_i,t}(z)  |\tilde \Sigma_t(x_j)|^\frac{1}{4} K_{x_j,t}(z)\,\mathrm{d} z\, \mathrm{d} \nu(t) \\
&= (2 \pi)^\frac{d}{2} 2^\frac{d}{2} \int_{0}^\infty \int_{\R^d}  \left(\sum_{i=1}^N b_i |\tilde \Sigma_t(x_i)|^\frac{1}{4} K_{x_i,t}(z) \right)^2 \mathrm{d} z\, \mathrm{d} \nu(t) \\
& \geq 0,
\end{align*}
since the Borel measure $\nu$ is finite and non-negative. It remains to show that strict inequality also holds.

Firstly, we note that $|\tilde \Sigma_0(x_i)|^\frac{1}{4} K_{x_i,0}(z) = 0$, for all $x_i,z \in \R^d$, which means that the integrand with respect to $t$
is identically equal to zero at $t=0$. 
Secondly, we note that the points $\{x_i\}_{i=1}^N$ are pairwise distinct 
and the functions 
$\{|\tilde \Sigma_t(x_i)|^\frac{1}{4} K_{x_i,t}(\cdot)\}_{i=1}^N$ 
are hence linearly independent for any $t \in (0,\infty)$. It is 
thus impossible to make the integrand with respect to $z$
identically equal to $0$ for a.e. $z \in \R^d$. As a consequence the integrand
with respect to $t$ is positive for all $t \in (0,\infty)$.
Since we know that the measure $\nu$ is not concentrated at $0$
this completes the proof that $\rho$ is positive definite on 
$\R^d \times \R^d$, for any $d \in \N$.

Finally, we note that the kernel $\rho$ is clearly non-stationary, and is a correlation function since $\rho(x,x)=1$, for any $x \in \R^d$.
\end{proof}

\begin{proof}[Proof of Proposition \ref{lem:posdef}]
We note that the definition of positive definite in Assumptions \ref{ass:posdef_stat}(i) refers
only to behaviour of the kernel on a finite set of pairwise distinct points
$\{x_i\}_{i=1}^N.$
By Assumption \ref{ass:posdef}(i), the function $G$ is non-negative and bounded. 
If $G(z) > 0$ for all $z \in \R^d$, then the matrix $\Sigma(z)$ is positive definite for all $z \in \R^d$, and the fact that $\rho(\cdot,\cdot)$ is positive definite follows directly from Proposition \ref{thm:nonstat}. 

It remains to investigate the case where $G(z) = 0$ for some $z \in \R^d$. We will prove 
that $\rho(\cdot,\cdot)$ is positive definite by showing that the correlation matrix 
$\bR$, with entries $\bR_{ij} = \rho( x_i, x_j)$, is positive definite for any pairwise disjoint points $\{x_i\}_{i=1}^N$. 
{Without loss of generality, we will study the case $G(x_1) = 0$; the proof easily adapts to the case where $G(x_i) = 0$, for $i \neq 1$. To define $\rho( x_1, x_j)$ in this case, we start by assuming $G(x_1) > 0, G(x_j) > 0$, and then take limits.

With $\Sigma(z) = G(z) \mathrm{I}_d$, we have 
\begin{align*}
Q(x_1,x_j) &= (x_1-x_j)^T\left(\frac{\Sigma(x_1) + \Sigma(x_j)}{2}\right)^{-1} (x_1-x_j) \\
&= 2 \|x_1-x_j\|_2^2 \Big(G(x_1) + G(x_j)\Big)^{-1},
\end{align*}
where $\|\cdot\|_2$ is the Euclidean norm, and
\[
\frac{2^\frac{d}{2} \det(\Sigma(x_1))^\frac{1}{4} \det(\Sigma(x_j))^\frac{1}{4}}{\det(\Sigma(x_1) + \Sigma(x_j))^\frac{1}{2}} = \left(\frac{4 G(x_1) G(x_j)}{\big( G(x_1) + G((x_j) \big)^2} \right)^{\frac{d}{4}}.
\] 
We now study separately three cases:
\begin{itemize}
\item[i)] $x_j = x_1$:  we have
\begin{equation}
\label{eq:az1}
\lim_{G(x_1)\rightarrow 0} \left(\frac{4 G(x_1) G(x_1)}{\big( G(x_1) + G(x_1) \big)^2} \right)^{\frac{d}{4}} = \lim_{G(x_1)\rightarrow 0} 1 =1,
\end{equation} 
and so using the algebra of limits, the continuity of 
$\rho_\mathrm{S}$, \eqref{eq:az1} and the fact that $\rho_\mathrm{S}(0)=1$, we have
\[
\lim_{G(x_1)\rightarrow 0} \rho(x_1,x_1) = \lim_{G(x_1) \rightarrow 0} \rho_\mathrm{S}\Big(\sqrt{Q(x_1,x_1)}\Big) = \rho_\mathrm{S}(0) = 1.
\]
\item[ii)] $x_j \neq x_1$ and $G(x_j)>0$: we have
\[
\lim_{G(x_1) \rightarrow 0} Q(x_1,x_j) = 2 \|x_1-x_j\|_2^2 \Big(G(x_j))\Big)^{-1},
\]
and 
\begin{equation}
\label{eq:az2}
\lim_{G(x_1) \rightarrow 0} \left(\frac{4 G(x_1) G(x_j)}{\big( G(x_1) + G(x_j) \big)^2} \right)^{\frac{d}{4}}
= 0.
\end{equation}
Thus, using the continuity of $\rho_\mathrm{S}$, together with \eqref{eq:az2} and the algebra of limits, we have
\[
\lim_{G(x_1)\rightarrow 0} \rho(x_1,x_j)  = 0.
\]
\item[iii)] $x_j \neq x_1, G(x_j) = 0$: 
we obtain 
\[
\lim_{G(x_1), G(x_j) \rightarrow 0} Q(x_1,x_j) = \infty, 
\]
which by Assumptions \ref{ass:posdef}(ii) implies that
\[
\lim_{G(x_1), G(x_j) \rightarrow 0} \rho_\mathrm{S}\Big(\sqrt{Q(x_1,x_j)} \Big) = 0.
\]
Since $(a+b)^2 \geq 4ab$ for any positive numbers $a$ and $b$, we have
\[
0 \leq \left(\frac{4 G(x_1) G(x_j)}{\big( G(x_1) + G((x_j) \big)^2} \right)^{\frac{d}{4}} \leq 1,
\]
for any $G(x_1) > 0, G(x_j) > 0$, and hence
\[
\lim_{G(x_1), G(x_j) \rightarrow 0} \rho(x_1,x_j)  = 0.
\]
\end{itemize}

Hence, when $G(x_i) > 0$, for $i=2,\dots, N$, we have $\lim_{G(x_1)\rightarrow 0} \bR = \bR^*$, where the matrix $\bR^*$ has the first row and column equal to the first basis vector $e_1 = (1,0,0,\dots,0) \in \R^N$, and the remaining submatrix $\bR^*_{N-1} \in \R^{N-1 \times N-1}$ with entries $\rho(x_i, x_j)$, for $i,j=2,\dots,N$. The matrix $\bR^*_{N-1}$ is positive definite by Proposition \ref{thm:nonstat}, from which we can conclude that $\bR^*$ is positive definite also. A similar argument holds when $G(x_i) = 0$ for one or more indices $i \in \{2, \dots, N\}$.}
\end{proof}

\end{document}